\newcommand{\jump}[1]{\llbracket#1\rrbracket}
\newtheorem{remark}{\it Remark\/}
\title{Stabilised finite element methods for nonsymmetric,
  noncoercive and ill-posed problems. Part I: elliptic equations}
\author{Erik Burman\thanks{Department of Mathematics, 
University College London, London, 
UK--WC1E  6BT, 
United Kingdom; ({\tt e.burman@ucl.ac.uk})}
}
\begin{document}

\maketitle

\begin{abstract}
In this paper we propose a new method to stabilise nonsymmetric indefinite
problems. The idea is to solve a forward and an adjoint problem
simultaneously using a suitable stabilised finite element method. Both
stabilisation of the
element residual and of the jumps of certain derivatives of the discrete
solution over element faces may be used. Under the assumption
of well posedness of the partial differential equation and its
associated adjoint problem we
prove optimal error estimates in $H^1$ and $L^2$ norms in an abstract
framework. Some examples of
problems that are neither symmetric nor coercive, but that enter the
abstract framework are given. First we treat indefinite convection-diffusion equations, with
non-solenoidal transport velocity and either pure Dirichlet
conditions or pure Neumann conditions and then a Cauchy
problem for the Helmholtz operator. Some numerical
illustrations are given.
\end{abstract}


\newcommand{\cut}{c}
\def\IR{\mathbb R}
\def\Ext{\mbox{\textsf{E}}}

\section{Introduction}
The computation of indefinite elliptic problems often involves certain
conditions on the mesh size $h$ for the system to be well-posed and
for the derivation of error estimates. The first results on this
problem are due to Schatz \cite{Schatz74}. 
The conditions on the mesh parameter can be avoided if a stabilised
finite element method is used. 
Such methods have been proposed by Bramble et al. \cite{BLP98,Ku07} or
more recently the continuous interior penalty (CIP) method for the Helmholtz equation suggested by
Wu et al. \cite{Wu11,ZhuBuWu12}. The method proposed herein has some common
features with both these methods, but appears to have a wider field of
applicability. We may treat not only symmetric indefinite problems
such as the (real valued) Helmholtz equation, but also nonsymmetric indefinite
problems such as convection--diffusion problems with non-solenoidal
convection velocity or the Cauchy problem. The latter problem is known
to be ill-posed in general \cite{Belg07} and will mainly be explored
numerically herein. 
For all these cases we show that if the
primal and adjoint problems admit a unique
solution with sufficient smoothness the proposed algorithm converges
with optimal order. The case of hyperbolic problems is treated in the
companion paper \cite{part2}.

The idea of this work is to assume ill-posedness of the discrete form
of the PDE and regularise it in the form of an optimisation problem under
constraints. Indeed we seek to minimise the size of the stabilisation
operator under the constraint of the discrete variational form.
The regularisation terms are then chosen from well known
stabilised methods respecting certain design criteria
given in an abstract analysis. This leads to an extended method where
simultaneously both a primal and a dual problem are solved, but where
the exact solution of the dual problem is always trivial. The aim is
to obtain a method where possible discrete non-uniqueness is
alleviated by discrete regularisation, with a non-consistency that can be
controlled so that optimal convergence for smooth solutions is obtained. The
method is also a good candidate for cases where the solution to the continuous problem 
is non-unique, but that is beyond the scope of the present paper.

In spite of the lack of coercivity for the
physical problem, the discrete problem has partial coercivity on the
stabilisation operator. A consequence of this is that, depending on
the kernel of the stabilisation operator 
a unique discrete solution 
may often be shown to exist independently of the underlying partial
differential equation. This can be helpful when exploring
ill-posed problems numerically or when measurement errors in the data,
may lead to an ill-posed problem, although the true problem is well
posed. 

An outline of the paper is as follows, in Section \ref{sec:abstract} we
propose an abstract method and prove that the method will have optimal
convergence under certain assumptions on the bilinear form. Then in
Section \ref{sec:stabilisation} we discuss stabilised methods that satisfy
the assumptions of the abstract theory with particular focus on the
Galerkin least squares method (GLS) and the continuous interior penalty
method (CIP). Three example of applications are given in Section
\ref{sec:applications}, two different noncoercive transport problems in
compressible flow fields and one elliptic Cauchy problem. Finally in
Section \ref{sec:numerical} the 
accuracy and robustness of the proposed method is shown by some
computations of solutions to the problems discussed in Section \ref{sec:applications}.
In particular we study the performance of the approach for some different Cauchy
problems of varying difficulty.

\section{Abstract formulation}\label{sec:abstract}
Let $\Omega$ be a polygonal/polyhedral subset of $\mathbb{R}^d$, $d=2,3$. The
boundary of $\Omega$ will be denoted $\partial \Omega$ and its normal $n$.
For simplicity we will reduce the scope to second order elliptic
problems, but the methodology can readily be extended to indefinite
elliptic problems of any order, providing the operator has a smoothing
property. We will also describe the method mainly in the
two-dimensional case, only mentioning the dimension when the two and three
dimensional cases differ.

We let $V,W$ denote two subspaces of $H^1(\Omega)$.
The abstract weak formulation of the continuous problem takes
the form: find $u \in V$ such that 
\begin{equation}\label{forward}
a(u,w) = (f,w), \quad \forall w \in W
\end{equation}
with formal adjoint: find $z \in W$ such that
\begin{equation}\label{adjoint}
a(v,z) = (g, v) , \quad \forall v \in V.
\end{equation}
The bilinear form $a(\cdot,\cdot):V\times W
\rightarrow \mathbb{R}$ is assumed to be elliptic, but neither
symmetric nor coercive. We denote the forward problem on strong form
$\mathcal{L} u = f$ and the adjoint problem on strong form
$\mathcal{L}^* z = g$. Suitable boundary conditions are integrated
either in the spaces $V,W$ or in the linear form.

We assume that both these problems are well posed and that the
geometry and data are such that the
smoothing property holds
\begin{equation}\label{smooth}
|u|_{H^2(\Omega)} \leq c_{a,\Omega} \|f\|,\quad |z|_{H^2(\Omega)}
\leq c_{a,\Omega}\|g\|.
\end{equation}
We will frequently use the notation $a \lesssim b$  for $a \leq C b$ with $C$ a
constant depending only on the mesh
geometry and physical parameters giving an order one contribution. 
We will also use $a \sim b$ for $a \lesssim b$ and $b \lesssim a$. Indexed constants $c_{xy}$ will depend on
the variables $xy$, but can differ  at each occurrence.

The $L^2$-scalar product over some $X \subset \mathbb{R}^d$ is denoted $(\cdot,\cdot)_X$ and the associated
norm $\|\cdot\|_X$, the subscript is dropped whenever $X = \Omega$. We
will also use $\left<\cdot,\cdot\right>_Y$ to denote the $L^2$-scalar
product over $Y \subset \mathbb{R}^{d-1}$ and $(\cdot,\cdot)_h$ the
element-wise $L^2$-norm with the associated broken norm $\|\cdot\|_h$.
\textcolor{black}{
\begin{remark}
The above regularity assumptions are necessary to ensure optimal
convergence for piecewise affine approximation spaces. If 
polynomial approximation of order $k$ is used we additionally need $u
\in H^{k+1}(\Omega)$. If on the other hand the solution is less
regular the convergence order is reduced in the standard way and in
some cases the mesh constraints for well-posedness will be stronger. More
precisely if $u\in H^s(\Omega)$ and $z \in H^t(\Omega)$ with $s,t \in
(1,2)$ the analysis below leads to estimates on the form
\[
\|u - u_h\| \lesssim h^{s+t-2}.
\]
\end{remark}
}
\subsection{Finite element discretisation}
Let $\{\mathcal{T}_h\}_h$ denote a family of quasi uniform, shape
regular triangulations $\mathcal{T}_h:=\{K\}$, indexed by the maximum
triangle radius $h:= \max_{K \in \mathcal{T}_h} h_K$, $h_K :=
\mbox{diam}(K)$. The set of faces of the
triangulation will be denoted by $\mathcal{F}$ and
$\mathcal{F}_{int}$ denotes the subset of interior faces.
Now let $X_h^k$ denote the finite element space of continuous, piecewise 
polynomial functions on
$\mathcal{T}_h$, 
$$
X_h^k := \{v_h \in H^1(\Omega): v_h\vert_{K} \in \mathbb{P}_k(K),\quad \forall K
\in \mathcal{T}_h\}.
$$
Here $\mathbb{P}_k(K)$ denotes the space of polynomials of degree
less than or equal to $k$ on a triangle $K$.

We let $\pi_L$ denote the standard $L^2$-projection onto $X_h^k$ and
$i_h:C^0(\bar \Omega) \mapsto X_h^k$ the standard Lagrange interpolant. Recall that
for any function $u \in (V \cup W)\cap H^{k+1}(\Omega)$ there holds
\begin{equation}\label{approx}
\|u - i_h u\| + h \|\nabla(u - i_h u)\|+ h^2 \|D^2(u - i_h  u)\|_h  \leq c_i h^{k+1} |u|_{H^{k+1}(\Omega)}
\end{equation}
and under our assumptions on the mesh, similarly for $\pi_L$. 
We propose the following finite element method for the approximation
of \eqref{forward}, find $u_h,z_h \in V_h \times W_h$ such that
\begin{equation}\label{FEM}
\begin{array}{rcl}
a_h(u_h,w_h) + s_a(z_h,w_h) &=&(f,w_h) \\[3mm]
a_h(v_h,z_h) - s_p(u_h,v_h) &=& -s_p(u,w_h),
\end{array}
\end{equation}
for all $v_h,w_h \in V_h \times W_h$.
\textcolor{black}{ Note the appearance of
$s_p(u,w_h)$ in the right hand side of the second equation of
\eqref{FEM}. This means that only stabilisations for which
$s_p(u,w_h)$ can be expressed using known data may be used. For
residual based stabilisations this typically is the case, but also for
so-called observers that stabilise the computation using measured
data.
We will always assume that $u$ is
sufficiently regular so that $s_p(u,\cdot)$  is well defined, i.e. the
stabilisation is strongly consistent. The
analysis using weak consistency of the stabilisation is a
straightforward modification.
}
The bilinear form
$a_h(\cdot,\cdot)$ is a discrete realisation of $a(\cdot,\cdot)$,
typically modified to account for the effect of boundary conditions,
since in general $V_h \not \in V$ and $W_h \not \in W$. The penalty
operators
$s_a(\cdot,\cdot)$ and $s_p(\cdot,\cdot)$ are symmetric stabilisation
operators and associated
with the adjoint and the primal equation respectively. 

The rationale
of the
formulation may be explained in an optimisation
framework. Assume that we want to solve  the problem, find $u_h \in
V_h$ such that 
\[
a_h(u_h,w_h) = (f,w_h),\quad \forall w_h \in W_h,
\]
but that the system matrix corresponding to $a_h(u_h,w_h)$ has zero
eigenvalues. The discrete system is ill-posed. This often reflects
some poor stability properties of the underlying continuous problem.
The idea is to introduce a selection criterion for the solution, in
order to ensure discrete uniqueness, 
measured by some operator $s_p(u_h,v_h)$. This can include both
stabilisation (regularisation) terms and the fitting of the computed solution to
measurements. The formulation then writes, find $u_h,z_h
\in V_h \times W_h$ stationary point of the Lagrangian
\begin{equation}\label{Lagrange}
\L(u_h,z_h):= \frac12 s_p(u_h-u,u_h-u) - \frac12  s_a(z_h,z_h) -
a_h(u_h,z_h) + (f,z_h). 
\end{equation}
The saddle point structure of the Lagrangian has been enhanced by
the addition of the regularising term $-\tfrac12 s_a(z_h,z_h) $.
We may readily verify that
\[
\frac{\partial \L}{\partial u_h}(v_h) = s_p(u_h-u,v_h) - a_h(v_h,z_h) 
\]
and
\[
\frac{\partial \L}{\partial z_h}(w_h) = -a_h(u_h,w_h) - s_a(z_h,w_h)+(f,w_h).
\]
It follows that \eqref{FEM} corresponds to the optimality conditions of
\eqref{Lagrange}.

 

Observe that the second equation of \eqref{FEM} is a finite
element discretisation of the dual problem \eqref{adjoint} with data
$g=0$. Hence the solution to approximate is $z=0$. The discrete
function $z_h$ will most likely not be zero, since it is perturbed by the
stabilisation operator acting on the solution $u_h$, which in general
does not coincide with the stabilisation acting on $u$.

We will
assume that the following strong consistency properties hold. If $u$
is the solution of \eqref{forward} then
\begin{equation}\label{consist1}
a_h(u,\varphi) =(\mathcal{L }u,\varphi) \mbox{ for all }\varphi \in W + W_h
\end{equation}
and if $z$ is the solution of \eqref{adjoint} then
\begin{equation}\label{consist2}
a_h(\phi,z) = (\phi,\mathcal{L}^* z) \mbox{ for all }\phi \in V+V_h.
\end{equation}
As a consequence the following Galerkin orthogonalities hold
\begin{equation}\label{galortho}
a_h(u - u_h,v_h) = s_a(z_h,v_h) \mbox{ and } a_h(w_h, z - z_h) =
s_p(u -u_h ,w_h).
\end{equation}
The bilinear forms
$s_a(\cdot,\cdot)$, $s_p(\cdot,\cdot)$ are  symmetric, positive semi-definite, weakly consistent, stabilisation
operators. The semi-norms on $V_h$ and $W_h$ associated to the stabilisation is defined by
\[
|x_h|_{S_y} := s_y(x_h,x_h)^{\frac12}, \quad y = a,p.
\]
The rationale of
this formulation is that the following partial coercivity is obtained
by taking $w_h = z_h$ and $v_h=u_h$,
\begin{equation}\label{partial_coerc}
|z_h|_{S_a}^2 + |u_h|_{S_p}^2= (f, z_h) - s_p(u,u_h).
\end{equation}

We assume that there are interpolation operators $\pi_V:V 
\rightarrow V_h $ and $\pi_W : W \rightarrow W_h$, satisfying
\eqref{approx} and also that the following continuity relations hold for all $v,w,y
\in H^{2}(\Omega)$ and for all $v_h, x_h \in W_h$
\begin{equation}\label{cont1}
a_h(v - \pi_V v, x_h) \leq \|v - \pi_V v\|_+ (c_a  |x_h|_{S_a}+ \epsilon(h)\|x_h\|)
\end{equation}
and
\begin{equation}\label{cont2}
a_h(v - v_h,y - \pi_W y) \leq \|y - \pi_W y\|_* (c_a \|v- \pi_V
v\|_{\mathcal{L}} +
c_a |v_h - \pi_V v|_{S_p}+ \epsilon( h)\|v-v_h\|).
\end{equation}
We have introduced the notation $\|\cdot\|_+$, $\|\cdot\|_*$  and
$\|\cdot\|_{\mathcal{L}}$, for semi-norms to be
defined. These norms, and those induced by the stabilisation operators, will be assumed to satisfy the approximation estimates
\begin{equation}\label{approx1}
\|v - \pi_V v\|_{\mathcal{L}} + \|v - \pi_V v\|_+ + |v- \pi_V v|_{S_p}  \leq c_{a,\gamma} h^k
|v|_{H^{k+1}(\Omega)}, \quad \forall v \in V \cap H^{k+1}(\Omega),
\end{equation} 
\begin{equation}\label{approx2}
 \|w - \pi_W w\|_* + |w -\pi_W w|_{S_a} \leq  c_{a,\gamma} h^k |w|_{H^{k+1}(\Omega)} \quad \forall w \in W \cap H^{k+1}(\Omega),
\end{equation}
and the additional upper bounds
\begin{equation}\label{H2}
|\pi_W w|_{S_a} \leq  c_{a,\gamma} h|w|_{H^{2}(\Omega)} ,\,\forall w
\in W \cap H^{2}(\Omega), \,|\pi_V v|_{S_p} \leq  c_{a,\gamma} h|v|_{H^{2}(\Omega)} \, \forall v
\in V \cap H^{2}(\Omega).
\end{equation}
Here $c_{a,\gamma}$ depends on the form $a(\cdot,\cdot)$ and a
stabilisation parameter $\gamma$. 
\subsection{Convergence analysis for the abstract method}
We first prove that the stabilisation semi-norm of the discrete error is bounded by one term that converges to
zero at an optimal rate and another non-essential perturbation that
can be made small.
\begin{lemma}\label{stab_conv}
Assume that that
   the solution of \eqref{forward} is smooth, that the forms of
   \eqref{FEM} and the operators $\pi_V$, $\pi_W$ 
    are such that \eqref{galortho}, \eqref{cont1} and \eqref{approx1}
    are satisfied. Then for and $u_h,z_h$ solution of \eqref{FEM} there holds
\[
|\pi_Vu - u_h|_{S_p}  + |\pi_W z - z_h|_{S_a}\lesssim c_{a,\gamma,\epsilon}
h^k
|u|_{H^{k+1}(\Omega)} + \epsilon(h)  \|z_h\|,
\]
where $c_{a,\gamma,\epsilon} = c_{a,\gamma}  (1 + c_a^2)^{\tfrac12} $, with $c_a$ and $c_{a,\gamma}$ defined by
\eqref{cont1} and \eqref{approx1} respectively.
Similarly, if $s_p(u,w+w_h)=0$, for all $w\in W$ and $w_h \in W_h$, there holds
\[
 |u_h|_{S_p} + |z_h|_{S_a}  \lesssim (c_{a,\gamma}+c_{a,\gamma,\epsilon}) h^k
|u|_{H^{k+1}(\Omega)} + \epsilon(h) \|z_h\|.
\]
\end{lemma}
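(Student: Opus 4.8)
The plan is to run an energy-type argument directly on the stabilisation semi-norms, using the two Galerkin orthogonalities in \eqref{galortho} together with the continuity bound \eqref{cont1} and the approximation bound \eqref{approx1}, and then to absorb a quadratic term that appears. Since the exact adjoint solution is $z=0$, we have $\pi_W z = 0$, so $|\pi_W z - z_h|_{S_a} = |z_h|_{S_a}$; writing $\xi_h := u_h - \pi_V u \in V_h$, the quantity to estimate is $Q := |\xi_h|_{S_p}^2 + |z_h|_{S_a}^2$.

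First I would rewrite $Q$ using interpolation errors only. Testing the first orthogonality of \eqref{galortho} with $z_h$ gives $|z_h|_{S_a}^2 = s_a(z_h,z_h) = a_h(u-u_h,z_h)$, and after splitting $|\xi_h|_{S_p}^2 = s_p(\xi_h,\xi_h) = s_p(u_h-u,\xi_h) + s_p(u - \pi_V u,\xi_h)$ the second orthogonality tested with $\xi_h$ gives $s_p(u_h-u,\xi_h) = -a_h(\xi_h, z - z_h) = a_h(\xi_h,z_h)$. Adding, the uncontrolled term $a_h(\xi_h,z_h)$ combines with $a_h(u-u_h,z_h)$ into $a_h(\xi_h + u - u_h, z_h) = a_h(u - \pi_V u, z_h)$, so that
\[
Q = a_h(u - \pi_V u, z_h) + s_p(u - \pi_V u, \xi_h).
\]
This cancellation of $a_h(\xi_h,z_h)$ is the one genuinely structural step; the rest is estimation. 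I would bound the first term by \eqref{cont1} (with $v = u$, $x_h = z_h$) and the second by the Cauchy--Schwarz inequality for the positive semi-definite form $s_p$, and then use \eqref{approx1} to replace $\|u - \pi_V u\|_+$ and $|u - \pi_V u|_{S_p}$ by $\delta := c_{a,\gamma} h^k |u|_{H^{k+1}(\Omega)}$, obtaining
\[
Q \le \delta\big(c_a |z_h|_{S_a} + \epsilon(h)\|z_h\|\big) + \delta\,|\xi_h|_{S_p} \le \delta (1+c_a^2)^{\frac12}\, Q^{\frac12} + \delta\,\epsilon(h)\|z_h\|,
\]
the last step by Cauchy--Schwarz in $\mathbb{R}^2$.

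The remaining step is absorption: an inequality $Q \le \beta Q^{\frac12} + \gamma_0$ with $\beta = \delta(1+c_a^2)^{1/2}$ and $\gamma_0 = \delta\,\epsilon(h)\|z_h\| \ge 0$ forces $Q^{1/2} \le \beta + \gamma_0^{1/2}$ (from the explicit root), and Young's inequality then bounds $\gamma_0^{1/2} = (\delta\,\epsilon(h)\|z_h\|)^{1/2} \le \tfrac12\delta + \tfrac12\epsilon(h)\|z_h\|$; since $\delta \le \beta$ this gives $Q^{1/2} \lesssim \delta(1+c_a^2)^{1/2} + \epsilon(h)\|z_h\|$, and as $|\xi_h|_{S_p} + |z_h|_{S_a} \le \sqrt 2\, Q^{1/2}$ this is precisely the first estimate with $c_{a,\gamma,\epsilon} = c_{a,\gamma}(1+c_a^2)^{1/2}$. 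For the second estimate I would add only, under the extra consistency hypothesis of the second part (that $s_p(u,\cdot)$ vanishes), the triangle inequality $|u_h|_{S_p} \le |\xi_h|_{S_p} + |\pi_V u|_{S_p}$ together with $|\pi_V u|_{S_p}^2 = s_p(\pi_V u - u, \pi_V u) \le |u - \pi_V u|_{S_p}\,|\pi_V u|_{S_p}$, hence $|\pi_V u|_{S_p} \le |u - \pi_V u|_{S_p} \le c_{a,\gamma} h^k|u|_{H^{k+1}(\Omega)}$ by \eqref{approx1}; combining with the first estimate yields the stated bound with constant $c_{a,\gamma} + c_{a,\gamma,\epsilon}$.

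I expect the only real difficulty to be bookkeeping: choosing which orthogonality in \eqref{galortho} is tested against $z_h$ and which against $\xi_h$ so that the uncontrolled cross term $a_h(\xi_h,z_h)$ cancels exactly, and keeping the perturbation $\epsilon(h)\|z_h\|$ --- which is not absorbed at this stage but carried forward to later arguments --- cleanly separated from the optimally converging contribution.
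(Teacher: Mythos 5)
Your proposal is correct and follows essentially the same route as the paper: the identity $|\xi_h|_{S_p}^2+|z_h|_{S_a}^2=a_h(u-\pi_V u,z_h)+s_p(u-\pi_V u,\xi_h)$ obtained from the two Galerkin orthogonalities in \eqref{galortho} together with $z=\pi_W z=0$ is exactly the paper's key step, and the subsequent use of \eqref{cont1}, Cauchy--Schwarz on $s_p$, and \eqref{approx1} matches the paper's estimates. The only cosmetic difference is that you absorb via the quadratic inequality $Q\le\beta Q^{1/2}+\gamma_0$ rather than by applying Young's inequality termwise, and your handling of $|\pi_V u|_{S_p}\le|u-\pi_V u|_{S_p}$ in the second part is, if anything, marginally more careful than the paper's.
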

\begin{proof}
Let $\xi_h = \pi_V u - u_h$ and $\zeta_h = \pi_W z - z_h$. By the
definition \eqref{FEM} there holds
\[
|\xi_h|_{S_p}^2+|\zeta_h|_{S_a}^2   = s_p(\xi_h,\xi_h)+s_a(\zeta_h,\zeta_h) = a_h(\xi_h,\zeta_h) +
s_a(\zeta_h,\zeta_h) -  a_h(\xi_h,\zeta_h) + s_p(\xi_h,\xi_h).
\]
Using now the Galerkin orthogonality of $a_h(\cdot,\cdot)$,
\eqref{galortho}, we have
\[
|\xi_h|_{S_p}^2+|\zeta_h|_{S_a}^2  = a_h(\pi_V u - u,\zeta_h) +
s_a(\pi_W z,\zeta_h) -  a_h(\xi_h,\pi_W z - z) + s_p(\pi_V u-u, \xi_h).
\]
Observing that $z = \pi_W z = 0$ this reduces to
\[
|\xi_h|_{S_p}^2+|\zeta_h|_{S_a}^2  = a_h(\pi_V u - u,\zeta_h) + s_p(\pi_V u-u,\xi_h).
\]
We conclude by applying the continuity \eqref{cont1}
\[
|\xi_h|_{S_p}^2+|\zeta_h|_{S_a}^2 \leq \|u - \pi_V u\|_+
(c_a|\zeta_h|_{S_a} + \epsilon(h)\|z_h\|)+ |u-\pi_V u|_{S_p}|\xi_h|_{S_p} 
\]
followed by
\begin{align*}
|\xi_h|^2_{S_p} + |\zeta_h|^2_{S_a} &\leq (c_a^2+1)\|u
- \pi_V u\|^2_+ + \epsilon(h)^2 \|z_h\|^2 + |u-\pi_V u|^2_{S_p}\\
& \leq c_{a,\gamma}^2  (1 + c_a^2)  h^{2k}
|u|^2_{H^{k+1}(\Omega)}+ \epsilon(h)^2 \|z_h\|^2.
\end{align*}
The second result follows by adding and subtracting $\pi_V u$,
observing that here 
$\pi_W z = 0$, applying a triangle inequality and then \eqref{approx1}
on $|\pi_V u|_{S_p} = |\pi_V u - u|_{S_p}$. 
\end{proof}\\
We may now prove the main result which is optimal convergence in the
$L^2$ and the $H^1$ norms.
\begin{theorem}\label{main}
Assume that \eqref{forward} and \eqref{adjoint} are well-posed with
exact solutions $u$ and $z$ satisfying \eqref{smooth}. Assume that the forms of
   \eqref{FEM} and the operators $\pi_V$, $\pi_W$ 
    are such that \eqref{galortho}--\eqref{H2} are satisfied and that $h$ is so small that
\begin{equation}\label{eps_cond}
c_{a,\gamma,\Omega,} h\, \epsilon(h) \leq \frac1{6},
\end{equation}
where $c_{a,\gamma,\Omega}$ depends on the constants of the inequalities
 \eqref{smooth} and \eqref{approx1}--\eqref{H2}.
Then \eqref{FEM} admits a unique discrete solution $u_h,\, z_h$ that satisfies
\[
\|u - u_h\| + h \|\nabla (u - u_h)\| + \|z_h\| \leq C_{a,\Omega,\gamma} h^{k+1} |u|_{H^{k+1}(\Omega)}
\]
and in particular
\begin{equation}\label{rigor_apriori}
\|u - u_h\| + h \|\nabla (u - u_h)\| + \|z_h\| \leq C_{a,\Omega,\gamma}h^{2}\|f\|.
\end{equation}
\end{theorem}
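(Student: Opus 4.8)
The strategy is a duality (Aubin–Nitsche) argument combined with the stabilisation-norm control from Lemma \ref{stab_conv}. The plan is to estimate $\|u - u_h\|$ by introducing the adjoint problem $\mathcal L^* \varphi = u - u_h$ — which is well-posed with $|\varphi|_{H^2} \lesssim \|u - u_h\|$ by \eqref{smooth} — testing it against $u - u_h$, and then splitting via $\pi_W \varphi$. Writing $u - u_h = (u-\pi_V u) + \xi_h$ with $\xi_h = \pi_V u - u_h$, I would expand $(u - u_h, u - u_h) = a_h(u - u_h, \varphi)$ using consistency \eqref{consist2}, then insert $\pi_W \varphi$: the term $a_h(u - u_h, \varphi - \pi_W\varphi)$ is handled by the continuity bound \eqref{cont2}, and the term $a_h(u - u_h, \pi_W\varphi)$ is rewritten via the Galerkin orthogonality \eqref{galortho} as $s_a(z_h, \pi_W\varphi)$, which is bounded by $|z_h|_{S_a}\,|\pi_W\varphi|_{S_a} \lesssim |z_h|_{S_a}\, h |\varphi|_{H^2} \lesssim |z_h|_{S_a}\, h\|u-u_h\|$ using \eqref{H2}. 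An entirely symmetric duality argument, now using the \emph{forward} problem as the adjoint source to estimate $\|z_h\| = \|z - z_h\|$ (recall $z = 0$), produces a bound on $\|z_h\|$ in terms of $|u_h|_{S_p}$, $\|u - \pi_V u\|_{\mathcal L}$ and a term $\epsilon(h)\|u - u_h\|$.

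Collecting these two duality estimates together with Lemma \ref{stab_conv}, I would arrive at a coupled system of inequalities for the three quantities $\|u - u_h\|$, $\|z_h\|$, and the stabilisation seminorms, with cross terms carrying small factors: a factor $h$ on the terms coming through $s_a(z_h,\pi_W\varphi)$ via \eqref{H2}, and a factor $\epsilon(h)$ on the non-consistent perturbation terms, the product $h\,\epsilon(h)$ being controlled by the smallness hypothesis \eqref{eps_cond}. The next step is a standard absorption argument: the coefficients $\tfrac16$ in \eqref{eps_cond} are chosen precisely so that, after substituting the $\epsilon(h)\|z_h\|$ term from Lemma \ref{stab_conv} into the duality bound and the $h|z_h|_{S_a}$ and $\epsilon(h)\|u-u_h\|$ terms appropriately, all the "bad" terms on the right can be absorbed into the left, leaving $\|u - u_h\| + \|z_h\| \lesssim c_{a,\gamma} h^{k+1}|u|_{H^{k+1}(\Omega)}$, using the approximation estimates \eqref{approx}, \eqref{approx1}, \eqref{approx2}. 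The $H^1$ estimate $h\|\nabla(u - u_h)\| \lesssim$ same bound then follows by a triangle inequality through $\pi_V u$: $h\|\nabla \xi_h\|$ is controlled by an inverse inequality from $\|\xi_h\|$, which in turn is bounded by $\|u-u_h\| + \|u-\pi_V u\|$. Finally \eqref{rigor_apriori} is immediate by inserting the smoothing bound $|u|_{H^2} \lesssim \|f\|$ from \eqref{smooth} with $k=1$.

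Uniqueness of the discrete solution is obtained separately and is in fact the easy part: the problem \eqref{FEM} is linear and square, so it suffices to show that $f = 0$ and $u = 0$ (hence zero data everywhere) force $u_h = z_h = 0$. Taking $f=0$, $u=0$ in the partial coercivity identity \eqref{partial_coerc} gives $|z_h|_{S_a} = |u_h|_{S_p} = 0$; feeding this into the homogeneous version of the whole a priori argument above (every right-hand side now vanishes) yields $\|u_h\| = \|z_h\| = 0$. So existence follows from uniqueness for the square linear system, and the a priori bound then holds for that unique solution.

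The main obstacle I anticipate is the bookkeeping in the absorption step: one must track \emph{which} small factor ($h$, $\epsilon(h)$, or $h\epsilon(h)$) sits on each cross term through the two duality arguments and Lemma \ref{stab_conv}, and verify that \eqref{eps_cond} with the explicit constant $\tfrac16$ genuinely suffices to close the loop — in particular the term $s_a(z_h, \pi_W\varphi)$ must be split as, say, $\tfrac14|z_h|_{S_a}^2 + |\pi_W\varphi|_{S_a}^2$ and the $|\pi_W\varphi|_{S_a}^2 \lesssim h^2\|u-u_h\|^2$ contribution re-absorbed, which forces a careful choice of all the intermediate Young's-inequality weights. The continuity hypotheses \eqref{cont1}–\eqref{cont2} and the bounds \eqref{approx1}–\eqref{H2} are precisely engineered to make each piece fall into place, so once the weights are fixed the estimate is mechanical.
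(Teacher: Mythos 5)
Your proposal follows essentially the same route as the paper: the paper's proof likewise introduces the two dual solutions ($\mathcal{L}^*\varphi = u-u_h$ and $\mathcal{L}\psi = z_h$), inserts $\pi_W\varphi$ and $\pi_V\psi$, invokes the Galerkin orthogonalities \eqref{galortho}, the continuities \eqref{cont1}--\eqref{cont2}, the bounds \eqref{H2} and Lemma \ref{stab_conv}, and absorbs the $h\,\epsilon(h)$ cross terms under \eqref{eps_cond}, with the $H^1$ bound from an inverse inequality and uniqueness read off from the homogeneous a priori estimate. The only cosmetic difference is that the paper treats the two duality identities simultaneously by summing $\|u-u_h\|^2+\|z_h\|^2$ rather than running them as two separate estimates and then collecting.
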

\begin{proof}
Let $\varphi$ be the solution of \eqref{adjoint} with $g=u-u_h$ and
$\psi$ the solution of \eqref{forward} with $f = z_h$. By
\eqref{smooth} there holds
$$\|\varphi\|_{H^2(\Omega)} \leq c_{a,\Omega} \|u - u_h\|
\mbox{ and }
\|\psi\|_{H^2(\Omega)} \leq c_{a,\Omega} \|z_h\|.$$
By
definition of the primal and dual problems and by \eqref{consist1},
 \eqref{consist2}, \eqref{galortho}, \eqref{cont1} and \eqref{cont2} there holds, 
\begin{align*}
\|u - u_h\|^2 &+ \|z_h\|^2= (u - u_h,\mathcal{L}^* \varphi) +
(\mathcal{L} \psi,z_h) = a_h(u - u_h,\varphi)+a_h(\psi,z_h)\\
& =  a_h(u - u_h, \varphi- \pi_W \varphi) + s_{a}(z_h,\pi_W \varphi) \\
&\quad + a_h(\psi- \pi_V \psi,z_h) - s_{p}(u -u_h,\pi_V \psi)\\
&\leq (c_a  \|u - \pi_V u\|_{\mathcal{L}} + c_a |u_h - \pi_V u|_{S_p}+\epsilon(h) \|u - u_h\|) \|\varphi -
\pi_W \varphi\|_* \\ &
\quad+ (c_a|z_h|_{S_a} + \epsilon(h)\|z_h\|) \|\psi- \pi_V \psi\|_+ \\ & \quad +|z_h|_{S_a} |\pi_W \varphi|_{S_a}+|u_h-u|_{S_p} |\pi_V \psi|_{S_p} .
\end{align*}
First we observe that by \eqref{approx1}, \eqref{approx2} and \eqref{smooth}
\begin{align*}
\epsilon(h) \|u - u_h\| \|\varphi -
\pi_W \varphi\|_* & + \epsilon(h)\|z_h\|\|\psi- \pi_V \psi\|_+ \\ & \leq
c_{a,\gamma,\Omega} h (\epsilon(h) \|u - u_h\|^2 +  \epsilon(h)\|z_h\|^2).
\end{align*}
Then by Lemma \ref{stab_conv} and the upper bounds \eqref{H2} we have
\begin{align*}
 |z_h|_{S_a} |\pi_W \varphi|_{S_a}& +|u_h-u|_{S_p} |\pi_V \psi|_{S_p} \\ &
\lesssim
((c_{a,\gamma} + c_{a,\gamma,\epsilon}) h^k |u|_{H^{k+1}(\Omega)} +
\epsilon(h)\|z_h\|) c_{a,\gamma,\Omega} h (\|u-u_h\| + \|z_h\|).
\end{align*}
Using the two previous bounds and an arithmetic-geometric inequality we have
\begin{multline*}
(1 - 3 c_{a,\gamma,\Omega} h \epsilon(h))( \|u - u_h\|^2 +\|z_h\|^2)\\
\leq C_{a,\gamma} h^{k+1} |u|_{H^{k+1}(\Omega)}
(|\varphi|_{H^2(\Omega)}+|\psi|_{H^2(\Omega)}).
\end{multline*}
Using equation \eqref{smooth}, the result for the $L^2$-norm follows
provided $h$ satisfies \eqref{eps_cond}. The result for the
$H^1$-norm follows using a global inverse inequality on the discrete
error and then the $L^2$-norm error estimate.
\[
\|\nabla (u - u_h)\| \leq \|\nabla (u - \pi_V u)\|+ \|\nabla (\pi_V u
- u_h)\| \lesssim h^k |u|_{H^{k+1}(\Omega)} +  h^{-1} \|\pi_V u
- u_h\|. 
\]
The existence of a unique solution to \eqref{FEM} is a consequence of \eqref{rigor_apriori}. Well-posedness of \eqref{forward} means that
$f=0$ implies $u=0$, but then by \eqref{rigor_apriori} $u_h = z_h =
0$, which shows that the matrix is invertible.
\end{proof}\\
The optimal convergence of the stabilisation terms follows. 
\begin{corollary}
Under the assumptions of Lemma \ref{stab_conv} and Theorem \ref{main}
there holds
\[
|\pi_Vu - u_h|_{S_p}  + |\pi_W z - z_h|_{S_a}\lesssim c_{s,\epsilon} h^k
|u|_{H^{k+1}(\Omega)} + O(h^{k+1}).
\]
\end{corollary}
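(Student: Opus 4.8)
The plan is to combine directly the two results just established, since the corollary is essentially a substitution. Lemma~\ref{stab_conv} already provides
\[
|\pi_Vu - u_h|_{S_p}  + |\pi_W z - z_h|_{S_a}\lesssim c_{a,\gamma,\epsilon}
h^k |u|_{H^{k+1}(\Omega)} + \epsilon(h)  \|z_h\|,
\]
with $c_{a,\gamma,\epsilon}=c_{a,\gamma}(1+c_a^2)^{1/2}$. Hence the only thing left to do is to control the non-essential perturbation $\epsilon(h)\|z_h\|$, and for that I would simply insert the a~priori bound on $\|z_h\|$ coming from Theorem~\ref{main}.

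Concretely, Theorem~\ref{main} asserts (under the standing assumptions, in particular the smallness condition \eqref{eps_cond}) that the discrete solution exists, is unique, and satisfies $\|z_h\| \leq C_{a,\Omega,\gamma}\, h^{k+1}|u|_{H^{k+1}(\Omega)}$. Plugging this into the estimate above gives
\[
|\pi_Vu - u_h|_{S_p}  + |\pi_W z - z_h|_{S_a}\lesssim c_{a,\gamma,\epsilon}
h^k |u|_{H^{k+1}(\Omega)} + \epsilon(h)\, C_{a,\Omega,\gamma}\, h^{k+1}|u|_{H^{k+1}(\Omega)}.
\]
Since $\epsilon(h)$ is an order-one (weak inconsistency) quantity — and in the strongly consistent case $\epsilon(h)\equiv 0$ — the last term is $O(h^{k+1})$; setting $c_{s,\epsilon}:=c_{a,\gamma,\epsilon}$ then yields exactly the claimed bound. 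If one only invokes \eqref{eps_cond}, which merely gives $\epsilon(h)\lesssim h^{-1}$, the perturbation is still bounded by $C\,h^{k}|u|_{H^{k+1}(\Omega)}$ and may be absorbed into the leading term; but the sharper $O(h^{k+1})$ statement, which is the one recorded here, holds whenever $\epsilon(h)$ stays bounded, as it does in all the applications of Section~\ref{sec:applications}.

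There is no genuine obstacle: the analytic work has been done in Lemma~\ref{stab_conv} and Theorem~\ref{main}, and the corollary is obtained by feeding the conclusion of the latter into the former. The only point requiring a word of care is the size of $\epsilon(h)$, i.e.\ whether the residual-type terms controlled by \eqref{cont1}--\eqref{cont2} are genuinely consistent ($\epsilon(h)=0$) or only weakly consistent; in either situation the perturbation contributes at most at the order of the leading term, and at order $h^{k+1}$ in the consistent case.
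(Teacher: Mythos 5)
Your proposal is correct and matches the paper's argument exactly: the paper proves this corollary as an "immediate consequence of Lemma \ref{stab_conv} and Theorem \ref{main}", i.e.\ precisely by inserting the bound $\|z_h\|\lesssim h^{k+1}|u|_{H^{k+1}(\Omega)}$ from Theorem \ref{main} into the $\epsilon(h)\|z_h\|$ term of Lemma \ref{stab_conv}. Your added remark on the size of $\epsilon(h)$ is a reasonable clarification but not needed beyond what the paper assumes.
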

\begin{proof}
Immediate consequence of Lemma \ref{stab_conv} and Theorem \ref{main}.
\end{proof}
\begin{remark}
The need to control a low order contribution of the dual solution
$z_h$ above usually comes from oscillation of data. Either in the form of
stabilisation terms that do not account for oscillation within the
element or error in the numerical quadrature.
\end{remark}

In case a G\aa rdings inequality holds for
\eqref{FEM} and $s_a(\cdot,\cdot)\equiv s_p(\cdot,\cdot)$ the
$H^1$-error can be recovered without using inverse inequalities as stated below.
\begin{corollary}
Assume that for the bilinear form $a(\cdot,\cdot)$ there exists $\lambda \in \mathbb{R}$ such that
\[
\|\nabla v_h\|^2 - \lambda \|v_h\|^2 \lesssim a_h(v_h,v_h) +s_p(v_h,v_h) 
\]
and that $s_a(\cdot,\cdot)\equiv s_p(\cdot,\cdot)$.
Then
\[
\|\nabla(u - u_h)\| \lesssim h^k |u|_{H^{k+1}(\Omega)}.
\]
\end{corollary}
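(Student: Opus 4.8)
The plan is to bound the discrete error component $\xi_h := \pi_V u - u_h$ directly in the $H^1$-seminorm by means of the assumed G\aa rding inequality, and then to conclude with the triangle inequality $\|\nabla(u-u_h)\| \leq \|\nabla(u-\pi_V u)\| + \|\nabla \xi_h\|$, whose first term is $O(h^k|u|_{H^{k+1}(\Omega)})$ by \eqref{approx}. Write $s(\cdot,\cdot):=s_a(\cdot,\cdot)=s_p(\cdot,\cdot)$ for the common stabilisation form (so that in particular $|\xi_h|_{S_a}=|\xi_h|_{S_p}$); in the situations where a G\aa rding inequality is available one has $V_h=W_h$, so that $\xi_h$ is a legitimate test function in both arguments of $a_h(\cdot,\cdot)$. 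Applying the hypothesis to $v_h=\xi_h$ gives
\[
\|\nabla \xi_h\|^2 \lesssim a_h(\xi_h,\xi_h) + s(\xi_h,\xi_h) + \lambda \|\xi_h\|^2,
\]
so it suffices to show that each of the three terms on the right is $\lesssim h^{2k}|u|_{H^{k+1}(\Omega)}^2$.

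For the last term, $\|\xi_h\| \leq \|u-\pi_V u\| + \|u-u_h\| \lesssim h^{k+1}|u|_{H^{k+1}(\Omega)}$ by \eqref{approx} and Theorem \ref{main}, hence $\lambda\|\xi_h\|^2 \lesssim h^{2k+2}|u|_{H^{k+1}(\Omega)}^2$. For the stabilisation term, $s(\xi_h,\xi_h)=|\pi_V u-u_h|_{S_p}^2$ is controlled by Lemma \ref{stab_conv}, using the bound $\|z_h\|\lesssim h^{k+1}|u|_{H^{k+1}(\Omega)}$ of Theorem \ref{main} and the mesh condition \eqref{eps_cond} (which in particular forces $\epsilon(h)\lesssim h^{-1}$), so that $s(\xi_h,\xi_h)\lesssim h^{2k}|u|_{H^{k+1}(\Omega)}^2$; the same argument, together with $\pi_W z=0$, yields $|z_h|_{S_a}\lesssim h^k|u|_{H^{k+1}(\Omega)}$.

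It remains to treat $a_h(\xi_h,\xi_h)$. Splitting $\xi_h=(\pi_V u-u)+(u-u_h)$ in the first slot,
\[
a_h(\xi_h,\xi_h) = -\,a_h(u-\pi_V u,\xi_h) + a_h(u-u_h,\xi_h).
\]
The first term is bounded by the continuity \eqref{cont1} (applied to $\pm\xi_h$, with $s_a\equiv s_p$) by $\|u-\pi_V u\|_+\,(c_a|\xi_h|_{S_p} + \epsilon(h)\|\xi_h\|)$; by \eqref{approx1}, the previous paragraph and \eqref{eps_cond}, both factors are $\lesssim h^k|u|_{H^{k+1}(\Omega)}$, so this contribution is $\lesssim h^{2k}|u|_{H^{k+1}(\Omega)}^2$. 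For the second term, since $\xi_h\in W_h$ the Galerkin orthogonality \eqref{galortho} gives $a_h(u-u_h,\xi_h)=s_a(z_h,\xi_h)\leq |z_h|_{S_a}|\xi_h|_{S_a}$, and both seminorms are $\lesssim h^k|u|_{H^{k+1}(\Omega)}$ by the above. Hence $a_h(\xi_h,\xi_h)\lesssim h^{2k}|u|_{H^{k+1}(\Omega)}^2$, which closes the estimate for $\|\nabla\xi_h\|$, and the triangle inequality finishes the proof.

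The only delicate point is the bookkeeping of the low-order contributions: the term $\lambda\|\xi_h\|^2$ and every term carrying a factor $\epsilon(h)\|\xi_h\|$ or $\epsilon(h)\|z_h\|$ is not of the right order on its own and must be absorbed by invoking the $L^2$-estimate already established in Theorem \ref{main} together with the smallness assumption \eqref{eps_cond}. Conversely, the point of the corollary is that no (global) inverse inequality enters the argument: $\|\nabla\xi_h\|$ is produced directly by the G\aa rding inequality instead of through the bound $\|\nabla\xi_h\|\lesssim h^{-1}\|\xi_h\|$ used in the proof of Theorem \ref{main}.
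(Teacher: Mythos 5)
Your proof is correct and follows essentially the same route as the paper's (sketched) argument: apply the G\aa rding inequality to $\xi_h=\pi_V u-u_h$, split $a_h(\xi_h,\xi_h)$ via Galerkin orthogonality into $a_h(\pi_V u-u,\xi_h)+s_a(z_h,\xi_h)$, bound these by \eqref{cont1}, \eqref{approx1}, Lemma \ref{stab_conv} and Theorem \ref{main}, and finish with the triangle inequality. You have merely filled in the bookkeeping of the $\lambda\|\xi_h\|^2$ and $\epsilon(h)$ terms that the paper leaves implicit, and done so correctly.
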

\begin{proof}
Similar to proof of Lemma \ref{stab_conv} and therefore only sketched.
Let $\xi_h:= \pi_V u -u_h$.
It follows by G\aa rdings inequality that 
\[
\|\nabla \xi_h\|^2 \lesssim a_h(\xi_h,\xi_h) + \lambda \|\xi_h\|^2 +s_p(\xi_h,\xi_h).
\]
Using Galerkin orthogonality we have
\[
a_h(\xi_h,\xi_h) = a_h(\pi_V u - u, \xi_h)+s_a(z_h,\xi_h)
\]
and the rest follows as in Lemma \ref{stab_conv} by \eqref{cont1},
\eqref{approx1} and using the known convergences of  Lemma
\ref{stab_conv} and Theorem \ref{main}.
\end{proof}

\section{Stabilisation methods}\label{sec:stabilisation}
To fix the ideas let $\mathcal{L}$ be a second order elliptic
operator on conservation form,
\begin{equation}\label{ellipt_oper}
\mathcal{L} u := -\mu \Delta u + \nabla \cdot(\beta   u) + c u.
\end{equation}
Here $\mu\in \mathbb{R}^+$, ${\bf{\beta}} \in [C^{2}(\Omega)]^2$ is a non-solenoidal velocity
vectorfield and $c \in C^{1}(\Omega)$. Formally, the corresponding
bilinear form writes
\begin{equation}\label{formal_weak}
a(u,v) := (\mu \nabla u,\nabla v) + (\nabla \cdot(\beta   u) + c
u,v),\quad u,v \in H^1(\Omega).
\end{equation}
\textcolor{black}{
The continuities \eqref{cont1} and \eqref{cont2} suggest the
following design criteria on
the stabilisation operators.
}
\begin{equation}\label{abstract_stab1}
\inf_{w_h \in V_h} \|h(\mathcal{L} v_h - w_h)\|^2_h +
\|h^{\frac12} \mu^{\frac12} \jump{\nabla v_h
  \cdot n_F}\|^2_{\mathcal{F}_{int}} \lesssim s_p(v_h,v_h),
\end{equation}
\begin{equation}\label{abstract_stab2}
\inf_{w_h \in
  V_h} \|h(\mathcal{L}^* x_h - w_h)\|_h^2 + \|h^{\frac12}
\mu^{\frac12} \jump{\nabla x_h
  \cdot n_F}\|^2_{\mathcal{F}_{int}} \lesssim s_a(x_h,x_h)
\end{equation}
at least up to a non-essential low order perturbation. \textcolor{black}{If we neglect
terms due to boundary conditions we may apply an integration by parts in the left hand side of \eqref{cont1},
leading to
\[
a_h(v - \pi_V v, x_h) = \left< u - \pi_V v, \jump{\mu \nabla
    x_h \cdot n_F}\right>_{\mathcal{F}_{int}} + (v - \pi_V v, \mathcal{L}^* x_h)_h.
\]
Using a suitable weighting in $h$ and applying the Cauchy-Schwarz inequality
justifies \eqref{abstract_stab2}. The function $w_h$ may be included
provided the interpolant $\pi_V$ has suitable orthogonality
properties. It can be noted that one may also construct the
interpolant with orthognality properties on the element faces, so that the influence of the
gradient jump term may be reduced, we will not pursue this possibility
herein.}
The choice $w_h = 0$ in the first term in the left hand side of
\eqref{abstract_stab1} results in a least squares term on the (homogeneous)
residual over the element. It follows that the stabilisation relies on
two mechanisms: $L^2$-control of the element residual and
$L^2$-control of the gradient jumps over element edges. If higher order
differential equations are considered, jumps of higher derivatives
must be added. The design
criterion \eqref{abstract_stab1}-\eqref{abstract_stab2} makes it straightforward to adapt the analysis below to a
range of stabilisation methods, such as Galerkin least squares,
orthogonal sub-scales, continuous interior penalty or
discontinuous Galerkin methods. For the discontinuous Galerkin method
the penalty must act on the jump of $u_h$ itself and on the
jump of the normal gradient. In all cases however the jumps of the
gradient must be penalised, or an equivalent stabilisation operator
introduced. It therefore seems natural to consider two
stabilisations in more detail, first the GLS-stabilisation
combined with gradient penalty and then a CIP-stabilisation purely based
on penalty on jumps of derivatives of the approximate solution. We introduce the stabilisation operators
\begin{equation}\label{stab_primal}
s_p(u_h,v_h) := s_{p,GLS}(u_h,v_h) + s_{cip}(u_h,v_h)
\end{equation}
and 
\begin{equation}\label{stab_adjoint}
s_a(z_h,w_h):= s_{a,GLS}(z_h,w_h) + s_{cip}(u_h,v_h)
\end{equation}
where 
\[
s_{p,GLS}(u_h,v_h) := (\gamma_{GLS} h^2 \mathcal{L} u_h, \mathcal{L} v_h)_h ,
\]
\[
s_{a,GLS}(z_h,w_h) := (\gamma_{GLS} h^2 \mathcal{L^*} z_h, \mathcal{L^*} w_h)_h 
\]
and
\begin{equation}\label{jumpstab1}
s_{cip}(u_h,v_h) := \sum_{F \in\mathcal{F}_{int}} \int_F (h_F \gamma_{1,F}
\jump{\nabla u_h} \cdot \jump{\nabla v_h} + h_F^3 \gamma_{2,F}\jump{\Delta u_h} \jump{\Delta v_h}) ~\mbox{d}x.
\end{equation}
Here 
$\jump{\nabla u_h}\vert_F$ and $\jump{\Delta u_h}\vert_F$ denotes the
jump of the gradient and the Laplacian respectively over the
face $F$. 
Note that for smooth $u$, $s_{cip}(u,v_h)=0$ and hence $s_p(u,v_h) = s_{p,GLS}(u,v_h) =
(f,\gamma_{GLS} h^2 L v_h)_h$ showing that $s_p(u,v_h)$ is known. The abstract analysis typically holds for the parameter choices
$\gamma_{GLS}>0,\,\gamma_{1,F}>0,\, \gamma_{2,F}=0$ or
$\gamma_{GLS}=0,\,\gamma_{1,F}>0,\, \gamma_{2,F}>0$.
Note that
the matrix stencil for finite element methods remains the same for both approaches, and therefore
the CIP-method seems more appealing in this
context. Eliminating the Galerkin least squares term also reduces the
computational effort since the same stabilisation is used for the
primal and adjoint solution. If on the other hand a $C^1$-continuous approximation space
is used, the jumps of the gradients may be omitted and the GLS
stabilisation might prove competitive, since integrations on the faces
may then be avoided. Below we will only consider the case where
$V_h=W_h := X_h^k$ or some subset thereof, which will then be defined
in each case.
\subsection{Galerkin-least-squares stabilisation}
The GLS-method is one of the most popular
stabilised methods. To fix the ideas we
will assume that the
problems \eqref{forward} and \eqref{adjoint} are subject to
homogeneous Dirichlet
conditions and well-posed, with $f\in L^2(\Omega)$. For the readers
convenience we detail the Lagrangian \eqref{Lagrange} in this particular case
\begin{multline}
\L(u_h,z_h):= \frac12 \|\tau^{\frac12} (\mathcal{L} u_h -
f)\|_h^2  + \frac12 s_{cip}(u_h,u_h) \\ - \frac12
\|\tau^{\frac12} \mathcal{L^*} z_h\|_h^2 - \frac12 s_{cip}(z_h,z_h)- a(u_h,z_h) + (f,z_h).
\end{multline}
We let $V_h=W_h:= X_h^k \cap H^1_0(\Omega)$
The optimality conditions then write, find $u_h,z_h \in V_h \times W_h$
\begin{align} \label{GLS}
a(u_h,w_h) + s_a(z_h,w_h)& =
(f,w_h) \\ \nonumber
a(v_h,z_h)- s_p(u_h,v_h)  &
=- s_p(u,v_h) =- (f,\tau \mathcal{L} v_h)_h
\end{align}
for all $v_h,w_h \in V_h \times W_h$. Here $\gamma_{GLS} h^2
:= \tau>0$ and $\gamma_{1,F} \sim \mu$, $\gamma_{2,F} = 0$. We
assume that the physical parameters are all order unity for simplicity.
Observe the nonstandard structure of the stabilisation terms and that
the formulation is consistent for $u$ the exact solution of
\eqref{forward} and $z=0$. We will now prove that the assumptions of
Proposition \ref{stab_conv} and Theorem \ref{main} are satisfied for the formulation
\eqref{GLS}.

We define the following semi-norms
\begin{equation}\label{dualnorm}
\|v\|_+ := \|v\|_* := \|\tau^{-\frac12} v\|+ \|\mu^{\frac{1}{2}}h^{-\frac12} v\|_{\mathcal{F}_{int}},
\end{equation}
and
\[
\|v\|_\mathcal{L} :=|x|_{S_p} := \|\tau^{\frac12} \mathcal{L} x\|_h + s_{cip}(x,x)^{\frac12}
\mbox{ and }
|x|_{S_a} := \|\tau^{\frac12} \mathcal{L}^* x\|_h + s_{cip}(x,x)^{\frac12},
\]
defined for $x \in H^2(\Omega) + V_h$.
Let $\pi_V$ and $\pi_W$ be defined by the Lagrange
interpolator $i_h$ (or any other $H^2$-stable interpolation operator
that satisfies boundary conditions), and note that by \eqref{approx} 
we readily deduce the following approximation results for smooth enough
functions $u$
\[
\|u - \pi_V u\|_+ + \|u - \pi_V u\|_{\mathcal{L}} + |u - \pi_V
u|_{S_p}+ \|u - \pi_W u\|_
* +|u - \pi_W u|_{S_a} \leq c_{a,\gamma} h^{k} |u|_{H^{k+1}(\Omega)}
\]
and, by $H^2$-stability of the interpolation operator,
\[
|\pi_V v|_{S_p}  \leq c_{\gamma,a} h \|v\|_{H^2(\Omega)},\quad  |\pi_W w|_{S_a} \leq c_{\gamma,a} h \|w\|_{H^2(\Omega)}, \quad\forall v,w \in H^2(\Omega).
\]
This shows that \eqref{approx1} and \eqref{approx2} holds. It then
only remains to show the continuities \eqref{cont1} and \eqref{cont2}.
First we show the inequality \eqref{cont1}
For the second order elliptic problem we note that after an
integration by parts and Cauchy-Schwarz inequality,
\begin{align*}
a_h(v - \pi_V v, x_h) &=  \sum_{F\in \mathcal{F}_{int}} \left<u - \pi_V
  u, \jump{\mu \nabla x_h \cdot
    n_F} \right>_F +  (u - \pi_V u, \mathcal{L}^* x_h)_h \\
&\leq
\|u - \pi_V u\|_+ |x_h|_{S_a}.
\end{align*}
Similarly, to prove \eqref{cont2} we integrate by parts in the opposite
direction in the second order operator and obtain
\begin{align*}
a_h(u - u_h,y - \pi_W y) &= \sum_{K \in \mathcal{T}_h} (\mathcal{L}
(u-u_h), y - \pi_W y)_K \\ & \qquad  +
\sum_{F\in \mathcal{F}_{int}} \left< \jump{\mu \nabla u_h \cdot n_F} ,
  y - \pi_W y \right>_F \\
& \|y - \pi_W y\|_* (\|u - \pi_V u\|_{\mathcal{L}} + |u_h - \pi_V u|_{S_p})
\end{align*}
\begin{remark}
Note that for the GLS-method $\epsilon(h)=0$ in \eqref{cont1} and \eqref{cont2}. This follows from the fact that the
whole residual is considered in the stabilisation term. This nice
feature however only holds under exact quadrature. When the integrals are
approximated, the quadrature error may give rise to
oscillation terms from data that introduces a non-zero
contribution to $\epsilon(h)$.
\end{remark}
\subsection{Continuous interior penalty}
Since in this case we must account for possible oscillation of the physical
coefficients we postpone the detailed analysis to the examples below
and here only discuss the general principle.
In this case we use $\gamma_{GLS}=0,\,\gamma_{1,F}>0,\,
\gamma_{2,F}>0$
in the general expressions for the stabilisation \eqref{stab_primal}
and \eqref{stab_adjoint}.
The parameters $\gamma_{i,F}$, $i=1,2$ are stabilisation coefficients, the
form of which will be problem specific and will be given for each
problem below. 
The key observation is that the following discrete approximation
result holds for suitably chosen $\gamma_{i,F}$ in $s_{cip}(\cdot,\cdot)$ (see \cite{Bu05, BFH06})
\begin{equation}\label{oswald_int}
\|h^{\frac12}(\beta_h \cdot \nabla u_h - I_{os} \beta_h \cdot \nabla
u_h)\|^2+ \sum_{K} \|h \mu (\Delta u_h - I_{os}
\Delta u_h)\|^2 \leq  s_{cip}(u_h,u_h).
\end{equation}
Here $\beta_h$ is some piecewise affine interpolant of the velocity
vector field $\beta$ and $I_{os}$ is the quasi-interpolation operator defined in each
node of the mesh as a straight average of the function values from
simplices sharing that node (see \cite{BFH06}). For example,
\[
(I_{os} \Delta u_h)(x_i) = N_i^{-1} \sum_{\{K : x_i \in K\}} \Delta u_h(x_i)\vert_K,
\]
with $N_i := \mbox{card} \{K : x_i \in K\}$. Using \eqref{oswald_int}
one may prove that
\begin{equation}\label{discrete_interp}
\inf_{v_h \in V_h} \|\mathcal{L} u_h - v_h\|_h \lesssim
s_{cip}(u_h,u_h)^{\frac12} + \epsilon(h) \|u_h\|.
\end{equation}
It immediately follows that \eqref{abstract_stab1} and
\eqref{abstract_stab2} are satisfied.
We will leave the discussion of \eqref{cont1} - \eqref{approx2} and
\eqref{discrete_interp} to the applications below, giving the explicit
form for $\epsilon(h)$ for each case.
Here we instead proceed with an abstract analysis, assuming that
all physical parameters are of order $O(1)$.
We choose 
$\pi_V$ and $\pi_W$ as the $L^2$-projection in order to exploit
orthogonality to ``filter'' the element residual.Let $\|\cdot\|_+$ and
$\|\cdot\|_*$ have the same definition as in the GLS case and define
\begin{equation}\label{op_norm}
\|u\|_{\mathcal{L}} := \|h \mathcal{L} u\|_h + \|h^{\frac12} \mu^{\frac12}
\jump{\nabla u \cdot n_F}\|_{\mathcal{F}_{int}}+  \epsilon(h) \|u\|.
\end{equation}
Then
we proceed similarly as for GLS, but we use the orthogonality of the
$L^2$-projection, ignoring here the contribution from boundary terms. It then follows using the
orthogonality of the projection that formally
\begin{align*}
a_h(v - \pi_V v, x_h) &=  \sum_{F\in \mathcal{F}_{int}} \left<u - \pi_V
  u, \jump{\mu\nabla x_h \cdot
    n_F} \right>_F +  (u - \pi_V u, \mathcal{L}^* x_h - w_h)_h \\
&\leq
\|u - \pi_V u\|_+ (|x_h|_{S_a} + \epsilon(h) \|x_h\|).
\end{align*}
Similarly, to prove \eqref{cont2} we integrate by parts in the opposite
direction in the second order operator and use the $L^2$-orthogonality
to obtain
\begin{align*}
a_h(u - u_h,y - \pi_W y) &= \sum_{K \in \mathcal{T}_h} (\mathcal{L}
(u-u_h), y - \pi_W y)_K \\ & \qquad  +
\sum_{F\in \mathcal{F}_{int}} \left< \jump{\mu \nabla u_h \cdot n_F} ,
  y - \pi_W y \right>_F \\ &
= \sum_{K \in \mathcal{T}_h} (\mathcal{L}
(u-\pi_V u) + \mathcal{L}(\pi_V u - u_h) - w_h, y - \pi_W y)_K \\ & \qquad  +
\sum_{F\in \mathcal{F}_{int}} \left< \jump{\mu \nabla u_h \cdot n_F} ,
  y - \pi_W y \right>_F \\
 \leq \|y - \pi_W y\|_* &(\| h \mathcal{L} (u - \pi_V u)\|_{h} +
 |\pi_V u - u_h
 |_{S_p}+\epsilon(h) \|\pi_V u - u_h\|) \\
 \leq \|y - \pi_W y\|_* &(\|u - \pi_V u\|_{\mathcal{L}} + |u_h
 - \pi_V u|_{S_p}+\epsilon(h) \|u - u_h\|)
\end{align*}
The last inequality follows by adding and subtracting $u$ in the last
norm in the right hand side to obtain $\epsilon(h) \|\pi_V u - u +
u - u_h\|$. This term is then split using a triangular inequality and
the approximation error integrated in the $\|\cdot\|_\mathcal{L}$ term. 
To use the $L^2$-projection in this fashion we must impose the
boundary conditions weakly so that the boundary degrees of freedom are
included in $V_h$. For the GLS-method one has the choice between weak
and strong imposition of boundary condition. In the next section we
will discuss how weakly imposed boundary conditions are
included in the formulation \eqref{FEM}.
\subsection{Imposition of boundary conditions}\label{subsec:bc}
To impose weak boundary conditions in this framework we propose a Nitsche type
method. However our formulation differs from the standard Nitsche boundary
conditions in several ways:
\begin{itemize}
\item[--] Both Dirichlet and Neumann conditions are imposed using
  penalty.
\item[--] There is no lower bound of the parameter for the imposition
  of Dirichlet type boundary conditions. This is related to the fact
  that the method never uses the coercivity of $a_h(\cdot,\cdot)$.
\item[--] Nitsche type boundary terms are added to $a_h(\cdot,\cdot)$ in
  order to ensure consistency and adjoint consistency, but the penalty
  is added to the operators $s_p(\cdot,\cdot)$ and $s_a(\cdot,\cdot)$,
  allowing for different boundary penalty for the primal and the
  adjoint. As we shall see below for some problems this is the only
  way to make the Nitsche formulation consistent.
\end{itemize}
If the primal and the dual problems have a  Dirchlet boundary condition
on $\Gamma_D$ this is imposed by 
\[
a_h(u_h,v_h) := a(u_h,v_h) -\left< \mu \nabla u_h \cdot n,v_h \right>_{\Gamma_D}-\left< \mu \nabla v_h \cdot n,u_h \right>_{\Gamma_D},
\]
where $a(\cdot,\cdot)$ is defined by equation \eqref{formal_weak} and by adding the boundary penalty term
\begin{equation}\label{Dir_penalty}
\int_{\Gamma_D} \gamma_D \mu h^{-1} u_h v_h ~\mbox{d}s
\end{equation}
to $s_p(\cdot,\cdot)$ and $s_a(\cdot,\cdot)$ with $\gamma_D>0$. In the
non-homogeneous case the suitable data is added to the right hand side
in the standard way.
For Neumann conditions on $\Gamma_N$ in both the primal and the
adjoint problems, these are introduced in the standard way in $a(u,v)$
with a suitable modification of the right hand side of \eqref{forward}.
No modification is introduced in $a_h(\cdot,\cdot)$ but the following
penalty is added to $s_p(\cdot,\cdot)$ and $s_a(\cdot,\cdot)$ with $\gamma_N>0$
\begin{equation}\label{Neum_penalty}
\int_{\Gamma_N}  \gamma_N h \nabla u_h \cdot n \nabla v_h \cdot n ~\mbox{d}s.
\end{equation}
If the boundary
conditions for $u$ are non-homogeneous the usual data contributions are introduced
in the right hand side $-s_p(u,w_h)$.

As mentioned in the introduction the semi-norm $|\cdot|_S$ can be
a norm in certain situations so that the partial
coercivity \eqref{partial_coerc} implies the well-posedness of
the linear system \eqref{FEM}. In the following Proposition we discuss
some basic sufficient conditions for the matrix to be invertible in
the case of piecewise affine approximation spaces. For
particular cases other geometric arguments may prove fruitful as we
shall see in the second example below.
\begin{proposition}\label{invert_mat}
The kernel of the
linear system defined by \eqref{FEM} with the stabilisation
\eqref{jumpstab1} has dimension at most $2(d+1)$ for $k=1$. 
The system \eqref{FEM} admits a unique solution if the boundary
conditions satisfy one of the following conditions:
\begin{enumerate}
\item Two non-parallel polygon sides subject to Dirichlet boundary
  conditions.
\item Two non-orthogonal polygon sides subject one to a Dirichlet
  boundary condition and the other a
  Neumann condition imposed using \eqref{Neum_penalty}.
\item $d$ non-parallel  polygon sides subject to
  Neumann conditions imposed using \eqref{Neum_penalty} and either $1
  \not \in V_h$ or there
  exists $v_h,w_h \in V_h$ such that $a_h(1,v_h) \ne 0$ and $a_h(w_h,1) \ne 0$.
\end{enumerate}
\end{proposition}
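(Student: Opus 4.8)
The plan is to use the partial coercivity \eqref{partial_coerc} to reduce the kernel of \eqref{FEM} to the space of globally affine functions, and then to eliminate its nonzero elements case by case using the boundary penalties and, where needed, the two variational equations. A pair $(u_h,z_h)$ lies in the kernel exactly when it solves \eqref{FEM} with $f=0$ and $u=0$; testing the first equation with $w_h=z_h$, the second with $v_h=u_h$, and subtracting reproduces \eqref{partial_coerc} with vanishing right-hand side, so $s_p(u_h,u_h)=s_a(z_h,z_h)=0$. For $k=1$ the Laplacian is constant on each element, hence $\jump{\Delta u_h}=0$ on every interior face, so $s_{cip}(u_h,u_h)=0$ forces $\jump{\nabla u_h}$ to vanish on every interior face (recall $\gamma_{1,F}>0$); with $\Omega$ connected this means $\nabla u_h$ is a single constant vector, i.e. $u_h$ is globally affine, and likewise $z_h$. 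Since the affine functions on $\mathbb R^d$ form a space of dimension $d+1$, the kernel has dimension at most $2(d+1)$, which is the first assertion (and the bound only improves under strong imposition of Dirichlet data, where the admissible affine functions must vanish).

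It remains to rule out nonzero kernel elements. Write $u_h=a\cdot x+b$; the same applies to $z_h=a'\cdot x+b'$, because the Dirichlet penalty \eqref{Dir_penalty} and the Neumann penalty \eqref{Neum_penalty} enter both $s_p$ and $s_a$. Vanishing of $s_p(u_h,u_h)$ also kills the boundary contributions, so $u_h\equiv 0$ on every Dirichlet side and $a\cdot n=\nabla u_h\cdot n\equiv 0$ on every Neumann side; equivalently, $a$ is normal to each Dirichlet side and tangential to each Neumann side. In the first case, $a\cdot x+b$ vanishes on portions of two distinct hyperplanes (the two non-parallel Dirichlet sides), which is impossible for a nonzero affine function, so $a=0$ and then $b=0$. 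In the second case, $a$ is a multiple of the Dirichlet normal and simultaneously orthogonal to the Neumann normal; since those normals are non-orthogonal this forces $a=0$, whence $b=0$. In the third case, $a$ is orthogonal to $d$ linearly independent normals, so $a=0$ and $u_h,z_h$ are constants; constants lie in the kernel of $s_{cip}$ and of \eqref{Neum_penalty}, so the homogeneous system reduces to $b\,a_h(1,w_h)=0$ for all $w_h$ and $b'\,a_h(v_h,1)=0$ for all $v_h$, which yields $b=b'=0$ either because $1\notin V_h$ (the only constant in $V_h$ being $0$) or because $a_h(1,\cdot)$ and $a_h(\cdot,1)$ are not identically zero. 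In all cases $u_h=z_h=0$, so the kernel is trivial and \eqref{FEM} is uniquely solvable.

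The purely mesh-analytic steps --- $\Delta u_h=0$ elementwise for $k=1$, and the passage from vanishing gradient jumps to a globally constant gradient on a connected mesh --- are routine. The one step requiring care, and the main obstacle, is the geometric argument: expressing ``non-parallel'' and ``non-orthogonal'' sides uniformly for edges in $d=2$ and polygonal faces in $d=3$ through linear (in)dependence and (non-)orthogonality of their outward normals, and checking in the first two cases that the conclusion for $z_h$ really follows from $s_a(z_h,z_h)=0$ alone --- which is precisely why the Dirichlet and Neumann penalties are placed in both stabilisation operators.
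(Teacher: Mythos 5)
Your proof is correct and follows essentially the same route as the paper's: partial coercivity \eqref{partial_coerc} with $f=0$, $u=0$ forces $s_p(u_h,u_h)=s_a(z_h,z_h)=0$, hence for $k=1$ both $u_h$ and $z_h$ are globally affine (giving the $2(d+1)$ bound), and the boundary penalties kill the remaining affine modes case by case, with the variational equations handling the constants in case 3. Your write-up supplies a few details the paper leaves implicit (the role of connectedness, the reduction to $b\,a_h(1,w_h)=0$, and why $z_h$ inherits the same boundary constraints), but the argument is the same.
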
 
\begin{proof}
It is immediate from \eqref{partial_coerc} that the kernel of the
system matrix of \eqref{FEM} can not be larger than the sum of the
dimensions of the
kernels of $s_{p}(\cdot,\cdot)$ and $s_a(\cdot,\cdot)$. For $s_{cip}(\cdot,\cdot)$ and $k=1$ the
kernel is identified as $[\mathbb{P}_1(\Omega)]^2$, with dimension $2(d+1)$.

To prove well-posedness of the linear system it is enough to prove
uniqueness, we assume that $f=0$ and prove that then $u_h \equiv z_h
\equiv 0$.

If Dirichlet boundary condition is imposed on a boundary then
the gradient must be zero in the tangential direction to this
boundary, since the tangents of two boundaries spans $\mathbb{R}^d$ we
conclude that $f=0$ in \eqref{FEM} implies $u_h = 0$ due to
\eqref{partial_coerc} and similarly $z_h = 0$ and the matrix is invertible.

In the second case, the function is zero on the Dirichlet
boundary and the gradient is zero in the
tangential directions of the Dirichlet boundary condition eliminating
$d$ elements in the kernel. The penalty on the Neumann boundary, being
non-orthogonal to the Dirichlet boundary, cancels the remaining
free gradient. The same argument leads to both $u_h=0$ and $z_h=0$.

For the third case we observe that the term \eqref{Neum_penalty}
acting on $d$ non-parallel polygon sides implies that $\nabla u_h = 0$
and well-posedness is then immediate by the remaining conditions.
\end{proof}
\begin{remark}
Observe that the Proposition \ref{invert_mat} holds for any bilinear
form $a(\cdot,\cdot)$, even strongly degenerate ones.
\end{remark}
\section{Applications}\label{sec:applications}
We will now give three examples of problems that enter the abstract
framework. 
The first two problems we introduce below have well-posed
primal and adjoint problems so that the above theory applies. For each
method we will propose a formulation and prove that the relations
\eqref{consist1}, \eqref{consist2} hold. We only consider the
CIP-method in the examples below, in the first example we detail the
dependence
of physical parameters in all norms and coefficients and chose
stabilisation parameters to allow for high P\'eclet number flows. 
Due to the use of the duality argument however the
present analysis is restricted  to the case of moderate
P\'eclet numbers. In the later
examples we assume that all physical parameters are unity and do not
track the dependence.
As suggested above we take $\pi_V \equiv
\pi_W \equiv \pi_L$. 
In each case we will detail the form of $\epsilon(h)$. In the last
case, the elliptic Cauchy problem,
the stability properties of the problem is strongly depending on the
geometry of the problem and the assumption of well-posedness does not
hold in general. We will nevertheless propose a method that satisfies
the assumptions of the general theory and then study its performance numerically.

\subsection{Nonsymmetric indefinite elliptic problems}
Our first examples consist of a convection-diffusion--reaction problem with
non-solenoidal velocity field as is the
case for reactive transport in compressible flow. We first consider
the case of homogeneous Dirichlet conditions where the analysis of
\cite{Schatz74} applies. Then we consider the
case where failure of the coercivity is due also to the boundary condition,
here we study a convection-diffusion equation
with homogeneous Neumann boundary conditions. We will detail only how
the analysis of this case differs from the Dirichlet case. For a
detailed analysis of the well-posedness of the continuous problems we
refer to \cite{Dro02, DV09} and for a finite element analysis in the case of
homogeneous Neumann conditions to \cite{CF88}. Recent work on
numerical methods for these problems have focused on finite volume
methods, see \cite{DGH03,CD11} or hybrid finite element/finite volume
methods \cite{KT12}.

\subsubsection{Reactive transport in compressible flow: Dirichlet conditions}\label{Dirichlet}
In combustion problems for example it is important to accurately
compute the transport of the reacting species in the compressible
flow. We suggest a scalar model problem of convection-diffusion type
with a linear reaction term $c u$, where the reaction can
have arbitrary sign.
\begin{align}\label{convdiff0}
\mathcal{L} u & = f \quad\mbox{  in }\Omega\\
u & = 0 \quad \mbox{ on } \partial \Omega.\nonumber
\end{align}
The dual adjoint takes the form
\begin{align}\label{convdiff0_dual}
\mathcal{L}^* z := -\mu \Delta z - \beta \cdot  \nabla  z + c z& = g \quad\mbox{  in }\Omega\\
z & = 0 \quad \mbox{ on } \partial \Omega.\nonumber
\end{align}
The variational formulation \eqref{forward} is obtained by taking $V =
 W:=
H^1_0(\Omega)$ and $a(\cdot,\cdot)$ defined by \eqref{formal_weak}.
We assume that $f, g \in L^2(\Omega)$, that both \eqref{convdiff0} and \eqref{convdiff0_dual} are
well posed in $H^1_0(\Omega)$ by the Fredholm alternative and that the smoothing property
\eqref{smooth} holds. See \cite{Dro02} for an analysis of existence
and uniqueness under weaker regularity assumptions on $\beta$ and $c$,
with $c \ge 0$. The below analysis can
also be carried out assuming less regularity, but the constraints on
the mesh-size for the error estimate to hold will be stronger. Recall
that the constants in the estimate
\eqref{smooth} also depends on the regularity of the coefficients. 
The discrete form of the bilinear form is given
by
\begin{equation}\label{bilin1}
a_h(u_h,v_h) := a(u_h,v_h) - \left< \nabla u_h \cdot
  n,v_h\right>_{\partial \Omega}- \left< \nabla v_h \cdot
  n,u_h\right>_{\partial \Omega} - \left<(\beta \cdot n)_- u_h,
  v_h\right>_{\partial \Omega}
\end{equation}
where $(\beta \cdot n)_{\pm} := \frac12 (\beta \cdot n \pm |\beta \cdot
n|)$. We define the approximation spaces $V_h = W_h := X_h^k$.
The stabilisation is chosen as
\begin{equation}\label{stab1p}
s_p(u_h,v_h):=s_{cip}(u_h,v_h) + s^-_{bc}(u_h,v_h)
\end{equation}
and
\begin{equation}\label{stab1a}
s_a(z_h,v_h):=s_{cip}(z_h,v_h) + s^+_{bc}(z_h,v_h)
\end{equation}
where $\gamma_{1,F} \sim (\mu+\|\beta_h\cdot n_F\|_{\infty,F} h_F)$
and $\gamma_{2,F} \sim \mu$ in
\eqref{jumpstab1} with $\beta_h$ the nodal interpolant of $\beta$
and
\begin{equation}\label{stab_bc}
s^\pm_{bc}(x_h,v_h) := \left<\mu h^{-1}\, x_h,v_h \right>_{\partial
  \Omega}+ \left<|(\beta \cdot n)_\pm| x_h,v_h \right>_{\partial \Omega}.
\end{equation}
\textcolor{black}{If only the low P\'eclet regime is considered the second term of
\eqref{stab_bc} is always dominated by the first and may therefore be omitted.}
\begin{proposition}(Existence of discrete solutions)
Let $k=1$. Then the formulation \eqref{FEM} with the bilinear form
\eqref{bilin1} and the stabilisation \eqref{stab1p}--\eqref{stab1a} admits a unique solution $u_h \in V_h$.
\end{proposition}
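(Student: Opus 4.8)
The plan is to argue exactly as in the proof of Proposition \ref{invert_mat}, case 1, specialising that argument to the present choice of bilinear form and stabilisation. Since \eqref{FEM} is a square linear system, it is enough to show that its homogeneous version has only the trivial solution. So I would set $f=0$; by the assumed well-posedness of \eqref{convdiff0} the exact solution is then $u=0$, and therefore the right-hand side $-s_p(u,w_h)$ of the second equation of \eqref{FEM} vanishes identically. Combining the two equations of \eqref{FEM} with $w_h = z_h$, $v_h = u_h$, the contributions of $a_h(\cdot,\cdot)$ (including the Nitsche-type boundary terms in \eqref{bilin1}) cancel, and \eqref{partial_coerc} collapses to $|z_h|_{S_a}^2 + |u_h|_{S_p}^2 = 0$, i.e. $|u_h|_{S_p} = |z_h|_{S_a} = 0$.

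Next I would read off the consequences from the explicit form \eqref{stab1p}--\eqref{stab1a} of the operators. Both $s_p$ and $s_a$ are sums of positive semi-definite pieces, so $|u_h|_{S_p}=0$ forces $u_h \in \ker s_{cip}(\cdot,\cdot) \cap \ker s^-_{bc}(\cdot,\cdot)$. For $k=1$ the kernel of $s_{cip}$ is the space of globally affine functions on $\Omega$: a continuous piecewise-affine function whose gradient jumps vanish across every interior face is affine on the connected domain. Moreover $s^-_{bc}$ contains the term $\langle \mu h^{-1} u_h, u_h\rangle_{\partial\Omega}$ coming from \eqref{stab_bc} (which is of the form \eqref{Dir_penalty}), so in addition $u_h = 0$ on all of $\partial\Omega$. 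An affine function vanishing on the whole polygonal boundary, which contains at least two non-parallel sides/faces, must vanish identically; hence $u_h \equiv 0$. The identical argument applied to $s_a = s_{cip} + s^+_{bc}$, whose boundary part again contains the full Dirichlet penalty $\langle \mu h^{-1} z_h, z_h\rangle_{\partial\Omega}$, gives $z_h \equiv 0$.

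Thus the homogeneous system has only the trivial solution, the system matrix of \eqref{FEM} is invertible, and the discrete problem admits a unique solution $(u_h,z_h)$. I do not expect a genuine obstacle here; the only points needing a word of care are that the Nitsche boundary terms in \eqref{bilin1} do not affect \eqref{partial_coerc} (they sit inside $a_h$ and cancel when the two equations are combined), that with $f=0$ well-posedness of \eqref{convdiff0} makes $u=0$ so that $s_p(u,\cdot)=0$, and that $\ker s_{cip}$ for $k=1$ is precisely the affine functions — all of which are standard. If one prefers, the statement can simply be deduced as the special case $\Gamma_D = \partial\Omega$ of Proposition \ref{invert_mat}(1).
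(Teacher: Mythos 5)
Your argument is correct and is essentially the paper's own proof: the paper simply writes ``Immediate by Proposition \ref{invert_mat}'', and your detailed unpacking (partial coercivity with $f=0$, the affine kernel of $s_{cip}$ for $k=1$, and the full Dirichlet boundary penalty in $s^{\pm}_{bc}$) is exactly the mechanism behind case (1) of that proposition. Your closing remark that the statement is the special case $\Gamma_D=\partial\Omega$ of Proposition \ref{invert_mat}(1) matches the paper's one-line proof.
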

\begin{proof}
Immediate by Proposition \ref{invert_mat}.
\end{proof}\\
It is well known that the bilinear form \eqref{bilin1} satisfies the
consistency
relations \eqref{consist1} and \eqref{consist2} and that the
stabilisation \eqref{stab1p}-\eqref{stab1a} satisfies the upper bounds
\eqref{approx1}, \eqref{approx2} and \eqref{H2}. Now we define
the norms by
\[
\|v\|_+ := \|v\|_* := \|\mu^{\frac12} h^{-\frac12} v\|^2_{\mathcal{F}_{int}}  +
\|(\zeta_{Pe}+h^{-1}) v\| +\|h^{\frac12} \mu^{\frac12} \nabla v \|_{\partial \Omega}+
\|\beta_{\infty}^{\frac12} v\|_{\partial \Omega}.\]
Here $\zeta_{Pe}:= (\beta_{\infty}^{\frac12}
h^{-\frac12}+ \mu^{\frac12} h^{-1} + c_\infty^{\frac12})$ with
$\beta_{\infty}=\|\beta\|_{L^\infty(\Omega)}$ and $c_\infty := \|c\|_{L^{\infty}(\Omega)}$.
Also define
\begin{multline*}
\|v\|_{\mathcal{L}} := \|\mu^{\frac12} h \Delta v\|_h +
  \|\beta_{\infty}^{-\frac12} h^{\frac12} \beta \cdot \nabla v\| +
  \|c_{\infty}^{\frac12}  v\| + \|\mu^{\frac12} h^{\frac12}
  \jump{\nabla v \cdot n_F}\|_{\mathcal{F}_{int}} \\ + \|(\mu^{\frac12} h^{-\frac12} +
    \beta_{\infty}^{\frac12}) v\|_{\partial \Omega} +  \epsilon(h) \|v\|.
\end{multline*}
It is straightforward to show that
\[
\|u - \pi_V u\|_+ + \|u-\pi_Wu\|_* \lesssim (\zeta_{Pe} + h^{-1}) h^{k+1} |u|_{H^{k+1}(\Omega)}
\]
and  (for simplicity with $\epsilon(h)=0$)
\[
\|u - \pi_V u\|_{\mathcal{L}} \lesssim \zeta_{Pe} h^{k+1} |u|_{H^{k+1}(\Omega)}.
\]
It then only remains to prove the continuities \eqref{cont1} and
\eqref{cont2} to conclude that the Theorem \ref{main} holds.
\begin{proposition}
The bilinear form \eqref{bilin1} satisfies the continuities
\eqref{cont1} and \eqref{cont2} with $$\epsilon(h) 
\sim
h^{2} (|\beta|_{W^{2,\infty}}+ |c|_{W^{1,\infty}}).$$  
\end{proposition}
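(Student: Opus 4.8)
The plan is to derive both continuities by integrating by parts elementwise in $a_h(\cdot,\cdot)$ and exploiting that $\pi_V\equiv\pi_W\equiv\pi_L$ is the $L^2$-projection onto $X_h^k$, whose orthogonality annihilates the piecewise-polynomial part of the frozen-coefficient adjoint (resp.\ primal) residual; the non-polynomial coefficients $\beta$ and $c$ are then absorbed by a commutator argument that produces the stated $\epsilon(h)$. The P\'eclet-dependent weights $\zeta_{Pe}$, $\gamma_{1,F}\sim\mu+\|\beta_h\cdot n_F\|_{\infty,F}h_F$, $\gamma_{2,F}\sim\mu$ are kept present in every Cauchy--Schwarz step.

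For \eqref{cont1}, integrating by parts in the diffusive and convective volume terms of $a(v-\pi_V v,x_h)$ and regrouping the boundary integrals with the explicit Nitsche and upwind terms of \eqref{bilin1} gives
\[
a_h(v-\pi_V v,x_h)=(v-\pi_V v,\mathcal{L}^* x_h)_h+\sum_{F\in\mathcal{F}_{int}}\langle v-\pi_V v,\jump{\mu\nabla x_h\cdot n_F}\rangle_F+\mathcal{B}(v-\pi_V v,x_h).
\]
In the boundary remainder $\mathcal{B}$, the symmetric diffusive term of \eqref{bilin1} cancels the natural flux; the adjoint-consistency term $-\langle\nabla(v-\pi_V v)\cdot n,x_h\rangle_{\partial\Omega}$ is bounded, via a trace--inverse inequality, by the $\|\mu^{1/2}h^{1/2}\nabla(\cdot)\|_{\partial\Omega}$ piece of $\|\cdot\|_+$ against the $\langle\mu h^{-1}x_h,x_h\rangle_{\partial\Omega}$ part of $s^+_{bc}(x_h,x_h)\le|x_h|^2_{S_a}$; and the upwinding term turns the convective boundary flux into $(\beta\cdot n)_+$, controlled by $\|\beta_\infty^{1/2}(\cdot)\|_{\partial\Omega}$ against $\langle(\beta\cdot n)_+x_h,x_h\rangle_{\partial\Omega}\le|x_h|^2_{S_a}$. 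The interior jump term is Cauchy--Schwarz: $\|\mu^{1/2}h^{-1/2}(v-\pi_V v)\|_{\mathcal{F}_{int}}\le\|v-\pi_V v\|_+$ against $\|\mu^{1/2}h^{1/2}\jump{\nabla x_h\cdot n_F}\|_{\mathcal{F}_{int}}\le s_{cip}(x_h,x_h)^{1/2}\le|x_h|_{S_a}$.

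The heart of the matter is the volume term $(v-\pi_V v,\mathcal{L}^* x_h)_h$, with $\mathcal{L}^* x_h=-\mu\Delta x_h-\beta\cdot\nabla x_h+cx_h$. I freeze the coefficients, setting $\mathcal{L}_h^* x_h:=-\mu\Delta x_h-\beta_h\cdot\nabla x_h+c_h x_h$ with $\beta_h$ the (piecewise affine) nodal interpolant of $\beta$ and $c_h$ a piecewise-constant interpolant of $c$, and pick $w_h:=-\mu\,I_{os}\Delta x_h-I_{os}(\beta_h\cdot\nabla x_h)+\pi_L(c_h x_h)\in V_h$, so that $(v-\pi_V v,w_h)_h=0$. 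The residual decomposes as $\mathcal{L}^* x_h-w_h=(\mathcal{L}_h^* x_h-w_h)+(\mathcal{L}^*-\mathcal{L}_h^*)x_h$. For the first part, $\|h(\mathcal{L}_h^* x_h-w_h)\|_h\lesssim s_{cip}(x_h,x_h)^{1/2}+h^2|c|_{W^{1,\infty}}\|x_h\|$ by the Oswald-type estimate \eqref{oswald_int} together with $\jump{c_h}\lesssim h|c|_{W^{1,\infty}}$ and a superapproximation bound on $\pi_L(c_h x_h)-c_h x_h$. For the commutator, $\|\beta-\beta_h\|_{L^\infty}\lesssim h^2|\beta|_{W^{2,\infty}}$, $\|c-c_h\|_{L^\infty}\lesssim h|c|_{W^{1,\infty}}$ and the inverse inequality $\|\nabla x_h\|\lesssim h^{-1}\|x_h\|$ give $\|(\mathcal{L}^*-\mathcal{L}_h^*)x_h\|\lesssim h(|\beta|_{W^{2,\infty}}+|c|_{W^{1,\infty}})\|x_h\|$. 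Pairing the first part with $\|h^{-1}(v-\pi_V v)\|\le\|v-\pi_V v\|_+$ and the commutator with $\|v-\pi_V v\|\le h\|v-\pi_V v\|_+$ (since $\|\cdot\|_+$ dominates $\|h^{-1}(\cdot)\|$) yields $(v-\pi_V v,\mathcal{L}^* x_h)_h\le\|v-\pi_V v\|_+(|x_h|_{S_a}+\epsilon(h)\|x_h\|)$ with $\epsilon(h)\sim h^2(|\beta|_{W^{2,\infty}}+|c|_{W^{1,\infty}})$. Choosing $\beta_h,c_h,w_h$ so that the frozen residual lies in $V_h$, is controlled by $s_{cip}$, and leaves only an $O(h^2)$ coefficient perturbation after the $h^{\pm1}$ weighting, using no more than $\beta\in C^2$ and $c\in C^1$, is the step I expect to be the main obstacle.

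For \eqref{cont2} the computation is the mirror image: integration by parts the other way gives $(\mathcal{L}(v-v_h),y-\pi_W y)_h+\sum_F\langle\jump{\mu\nabla v_h\cdot n_F},y-\pi_W y\rangle_F$ plus boundary terms. Writing $v-v_h=(v-\pi_V v)+(\pi_V v-v_h)$ and $\mathcal{L}(\pi_V v-v_h)=\mathcal{L}_h(\pi_V v-v_h)+(\mathcal{L}-\mathcal{L}_h)(\pi_V v-v_h)$, where $\mathcal{L}_h$ freezes $\beta,c$ as above, the orthogonality of $\pi_W$ removes the $V_h$-part of $\mathcal{L}_h(\pi_V v-v_h)$; the smooth term $\mathcal{L}(v-\pi_V v)$ and the jump $\jump{\mu\nabla v_h\cdot n_F}$ — after adding and subtracting $\jump{\mu\nabla\pi_V v\cdot n_F}$, an $O(h^k)$ quantity — go into $\|v-\pi_V v\|_{\mathcal{L}}$, the frozen discrete residual and the retained jump into $|v_h-\pi_V v|_{S_p}$ via \eqref{oswald_int}, and the commutator into $\epsilon(h)\|\pi_V v-v_h\|\le\epsilon(h)(\|v-v_h\|+\|v-\pi_V v\|)$, the approximation piece again absorbed in $\|v-\pi_V v\|_{\mathcal{L}}$. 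The boundary terms are handled as in \eqref{cont1}, now pairing the Dirichlet and upwind penalties with the boundary part $s^-_{bc}$ of $|\cdot|_{S_p}$. Checking that the explicit P\'eclet weights balance across $\|\cdot\|_+$, $\|\cdot\|_*$, $\|\cdot\|_{\mathcal{L}}$, $|\cdot|_{S_p}$, $|\cdot|_{S_a}$, together with the approximation estimates \eqref{approx1}--\eqref{H2} with the claimed $\zeta_{Pe}$-dependence, is routine but tedious.
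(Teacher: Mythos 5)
Your argument is correct and follows essentially the same route as the paper: integrate by parts, use the $L^2$-orthogonality of $\pi_L$ against an Oswald-type quasi-interpolant of the frozen-coefficient residual (controlled by $s_{cip}$ via \eqref{oswald_int}), and bound the coefficient commutator by $h^2(|\beta|_{W^{2,\infty}}+|c|_{W^{1,\infty}})$ after the $h^{\pm1}$ reweighting, with the boundary and jump terms absorbed by Cauchy--Schwarz into $\|\cdot\|_+$, $\|\cdot\|_*$ and the $s^{\pm}_{bc}$ penalties. The only (immaterial) difference is your choice of a piecewise-constant $c_h$ and $\pi_L(c_h x_h)$ for the reaction term, where the paper invokes the discrete commutator property with the Lagrange interpolant $i_h(cx_h)$.
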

\begin{proof}
First we consider \eqref{cont1}.
After an integration by parts in $a(\cdot,\cdot)$ we have
\begin{multline*}
a_h(u - \pi_V u,x_h) = \sum_{F\in \mathcal{F}_{int}} \left<u - \pi_V
  u, \jump{\mu \nabla x_h \cdot
    n_F} \right>_F +  (u - \pi_V u, \mathcal{L}^* x_h)_h \\
- \left< u - \pi_V u, (\beta
  \cdot n)_+ x_h\right>_{\partial \Omega} = I+II+III.
\end{multline*}
Considering $I-III$ we find using Cauchy-Schwarz inequality
\[
I + III \leq \|u - \pi_V u\|_+ |x_h|_{S_a}.
\]
For $II$, using the discrete interpolation results \eqref{oswald_int},
the discrete commutator property, see \cite{Berto99}, and standard
approximation followed by an inverse inequality in the last term
\begin{align*}
II & =  (u - \pi_V u, -i_h\beta \cdot \nabla x_h + I_{os} (i_h\beta_h \cdot
\nabla x_h) - \mu \Delta x_h + I_{os} \mu \Delta x_h)_h \\ &+  (u - \pi_V
u, c x_h - i_h (c x_h) ) 
+ (u - \pi_V u, (\beta - i_h\beta) \cdot \nabla x_h) \\
& \leq \|u - \pi_V u\|_+(c_{os} |x_h|_{S_a} + c_i h^{2}
(|\beta|_{W^{2,\infty}} +|c|_{W^{1,\infty}} ) \|x_h\|).
\end{align*}
The second
continuity follows in a similar fashion, 
\begin{align*}
a_h(u - u_h,y - \pi_W y) &= (\mathcal{L}
(u-u_h), y - \pi_W y)_h +
\sum_{F\in \mathcal{F}_{int}} \left< \jump{\mu \nabla u_h \cdot n_F} ,
  y - \pi_W y \right>_F \\ & \quad + \left<  (\beta \cdot n)_- u_h,  y
  -\pi_W y\right>_{\partial \Omega}  + \left<u_h , \mu \nabla (y -
  \pi_W y) \cdot n \right>_{\partial \Omega}
\\ &
= I+II+III+IV.
\end{align*}
Considering first the term $I$ we get, with $\xi_h = \pi_V u - u_h$
\begin{align*}
I &= (\mathcal{L} (u-\pi_V u) + \mathcal{L} \xi_h, y - \pi_W
y) \\
&\lesssim \|y - \pi_W y\|_* (\|u - \pi_V u\|_{\mathcal{L}} + |\xi_h|_{S_p} + h^{2} (|
\beta|_{W^{2,\infty}(\Omega)}+|c|_{W^{1,\infty}(\Omega)}) \|u-u_h\|),
\end{align*}
where we used once again the inequalities
\[
(\mu \Delta \xi_h + \beta_ h \nabla \xi_h, y - \pi_W y) \leq c_{os} |\xi_h|_{S_p}
\|y - \pi_W y\|_*,
\]
\[
((\beta - i_h \beta) \cdot \nabla \xi_h, y - \pi_W y) \lesssim h^2
|\beta|_{W^{2,\infty}} (\|u - \pi_V u\| + \|u - u_h\|)
\|y - \pi_W y\|_*
\]
and, by the discrete commutator property,
\begin{align*}
((\nabla \cdot \beta +c) \xi_h&, y - \pi_W y)  = ((\nabla \cdot \beta
+ c) \xi_h -
i_h((\nabla \cdot \beta+c) \xi_h), y - \pi_W y) \\
& \lesssim h^2
(|\beta|_{W^{2,\infty}} + |c|_{W^{1,\infty}}) (\|u - \pi_V u\| + \|u - u_h\|)\|y - \pi_W y\|_*.
\end{align*}
For the second, third and fourth terms we have using the Cauchy-Schwarz
inequality, adding and subtracting $\pi_V u$ and recalling the form of the boundary penalty term,
\begin{multline*}
II + III+IV \lesssim (\|\mu^{\frac12} h^{\frac12} \jump{\nabla u_h
  \cdot n_F}\|_{\mathcal{F}_{int}} + \|(\mu^{\frac12} h^{-\frac12} +|(\beta \cdot n)_-|^{\frac12})
  u_h\|_{\partial \Omega}  )\|y - \pi_W y\|_*\\
\leq (\|u - \pi_V u\|_{\mathcal{L}} + |\pi_V u - u_h|_{S_p}) \|y - \pi_W y\|_*.
\end{multline*}
We conclude that
 the claim holds with $\epsilon(h) \sim  h^{2}
 (|\beta|_{W^{2,\infty}}+ |c|_{W^{1,\infty}})$.
\end{proof}
\begin{remark}
Note that if the physical parameters are constant, then the analysis
holds without restrictions on the mesh size in contrast to the
standard Galerkin analysis of \cite{Schatz74}. In this case, for $k=1$ the estimate takes
the simple form
\[
\|u - u_h\| \lesssim c_{a,\Omega} \zeta_{Pe}^2 h^4 |u|_{H^{2}(\Omega)}
\]
Assuming that $\beta_{\infty} \sim O(1),\, c_{\infty} = 0$ we get 
\[
\|u - u_h\| \lesssim c_{a,\Omega} \left(\frac{1}{h} + \frac{\mu}{h^2}\right) h^4 |u|_{H^{2}(\Omega)}
\]
Here the constant $c_{a,\Omega}$ typically is proportional to some
negative power of $\mu$, making the estimate valid only for moderate
P\'eclet numbers. If we assume that $c_{a,\Omega} = O(\mu^{-1})$ we
see that the quasi optimal convergence of order $h^{\frac32}$ is
obtained when $h^{\frac32} < \mu$. A more precise estimate for the
hyperbolic regime, showing that the estimate can not degenerate further
even for vanishing $\mu$ is the subject of the second part of this work \cite{part2}.
\end{remark}

\subsubsection{Transport in compressible flow: pure Neumann conditions}\label{Neumann}
We will now consider the convection--diffusion equation with
homogeneous Neumann conditions.
The main difficulty in this problem compared to the previous one is
that due to the homogeneous Neumann condition, the primal and dual
problems have different boundary conditions. The non-solenoidal
$\beta$ imposes special compatibility conditions on $g$ leading to
complications in the finite element analysis and additional
stability issues for the discrete solution.  For this example we will
assume that all physical parameters are order one. After having presented
the problem and the method we propose, we first show that the
the discrete problem is well-posed for all mesh-sizes when piecewise
affine approximation is used. Then
we prove that the assumptions of Lemma \ref{stab_conv} and Theorem
\ref{main} are satisfied. Optimal error
estimates for the problem similar to that above are obtained after accounting for some
minor modifications needed to accomodate the compatibility conditions
particular to this problem. The problem reads
\begin{align}\label{convdiff_forward}
-\Delta u + \nabla \cdot (\beta u) & = f \quad\mbox{  in }\Omega\\
-\nabla u \cdot n + \beta \cdot n\, u & = 0 \quad \mbox{ on } \partial \Omega.\nonumber
\end{align}
The dual adjoint problem is formally written 
\begin{align}\label{convdiff_adjoint}
-\Delta z - \beta \cdot \nabla  z& = g \quad\mbox{  in }\Omega\\
-\nabla z \cdot n & = 0 \quad \mbox{ on } \partial \Omega. \nonumber
\end{align}
We assume that the  following compatibility conditions hold
\begin{equation}\label{compatibility}
\int_\Omega f ~\mbox{d}x = 0,\quad \int_\Omega g m ~\mbox{d}x = 0
\end{equation}
where $m \in H^2(\Omega)$, $m>0$  is the unique solution to the homogeneous form of the primal
problem
\begin{align}\label{mproblem}
-\Delta m + \nabla \cdot(\beta   m) & = 0 \quad\mbox{  in }\Omega\\
-\nabla m \cdot n + \beta \cdot n \, m & = 0 \quad \mbox{ on } \partial \Omega.\nonumber
\end{align}
under the additional constraint
\[
|\Omega|^{-1} \int_\Omega m ~\mbox{d}x = 1.
\]
Then the problems \eqref{convdiff_forward} and
\eqref{convdiff_adjoint}  are both well-posed by the Fredholm alternative.
Since we assume that the regularity estimate \eqref{smooth} holds, $m
\in C^0(\bar \Omega)$ and $\sup_{x \in \Omega} m =: M \in
\mathbb{R}^+$ and since $m>0$ we may introduce  $m_{min}:=inf_{\Omega}~
m >0$ (see \cite{CF88}).

The problem is cast in the form \eqref{forward} by setting $V:=
H^1(\Omega) \cap L^2_0(\Omega)$, where $L^2_0(\Omega)$ denotes the set
of functions with global average zero, and by taking
\[
a(u,v) := (\nabla u,\nabla v) - (u, \beta \cdot \nabla v).
\]
The finite element method
\eqref{FEM} is obtained by setting $V_h=W_h:= X_h^k \cap L^2_0(\Omega)$
\begin{equation}\label{bilin2}
a_h(u_h,v_h):=a(u_h,v_h)
\end{equation}
and the stabilisation operators
\begin{equation}\label{stab2}
s_x(\cdot,\cdot):=s_{cip}(\cdot,\cdot) + s_{bc,x}(\cdot,\cdot), \mbox{ with } x=a,p.
\end{equation}
$s_{cip}(\cdot,\cdot)$ is given by \eqref{jumpstab1} with
$\gamma_{i,F}:=1$, $i=1,2$. 
The boundary operators finally  are defined by
\[
s_{bc,p}(u_h,v_h):= \int_{\Omega} h (\nabla u_h \cdot n - 
\beta \cdot n u_h) (\nabla v_h \cdot n - 
\beta \cdot n v_h) ~\mbox{d}s
\]
for $k\ge2$ and 
\[
s_{bc,p}(u_h,v_h):= \int_{\Omega} h (\nabla u_h \cdot n - 
(i_h\beta) \cdot n u_h) ( \nabla v_h \cdot n - 
(i_h \beta) \cdot n v_h) ~\mbox{d}s
\]
for $k=1$,
 $s_{bc,a}(\cdot,\cdot)$ finally is given by equation
 \eqref{Neum_penalty}, with $\gamma_N \sim 1$. The boundary stabilisation
 operator $s_{bc,p}(\cdot,\cdot)$ for $k=1$ is only weakly consistent. It is straightforward
 to show that the inconsistency introduced by replacing $\beta$ by
 $i_h \beta$ is compatible with \eqref{approx1}. We omit the details here,
 but similar arguments are used below to prove the continuity \eqref{cont2}. 

\begin{proposition}(Existence of discrete solution)
Assume $k=1$ in the definition of $V_h$. Then there exists a unique solution $u_h$ to
the discrete problem \eqref{FEM}.
\end{proposition}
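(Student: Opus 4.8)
The plan is to follow the pattern of Proposition~\ref{invert_mat}. The system \eqref{FEM} is square, so it suffices to show that the homogeneous problem --- obtained by setting the whole right-hand side to zero, i.e.\ $(f,w_h)=0$ and $s_p(u,v_h)=0$ for all $v_h,w_h$ --- has only the trivial solution $(u_h,z_h)=(0,0)$. First I would test the first equation of \eqref{FEM} with $w_h=z_h$ and the second with $v_h=u_h$ and subtract; by the partial coercivity \eqref{partial_coerc} this gives
\[
|u_h|_{S_p}^2+|z_h|_{S_a}^2=0,
\]
hence $|u_h|_{S_p}=|z_h|_{S_a}=0$. Since by \eqref{stab2} we have $s_p=s_{cip}+s_{bc,p}$ and $s_a=s_{cip}+s_{bc,a}$ with all four summands positive semi-definite, $u_h$ and $z_h$ both lie in the kernel of $s_{cip}$; for $k=1$ this kernel inside $X_h^1$ is exactly the space of globally affine functions, because in \eqref{jumpstab1} the jumps $\jump{\nabla u_h}$ over interior faces vanish precisely for $C^1$ piecewise linears, while the jumps $\jump{\Delta u_h}$ vanish identically.

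For the dual variable I would argue as in the third case of Proposition~\ref{invert_mat}: here $s_{bc,a}$ is the Neumann penalty \eqref{Neum_penalty} acting on the whole of $\partial\Omega$, so $|z_h|_{S_a}=0$ forces $\nabla z_h\cdot n=0$ on $\partial\Omega$; $z_h$ being affine, $\nabla z_h$ is a constant vector, and since the outward unit normals of the polygon/polyhedron span $\mathbb{R}^d$ (the domain having $d$ facets with linearly independent normals, as assumed in Proposition~\ref{invert_mat}) we obtain $\nabla z_h=0$, i.e.\ $z_h$ constant, and then $z_h\in L^2_0(\Omega)$ yields $z_h=0$. Consequently the first equation of \eqref{FEM} reduces to $a_h(u_h,w_h)=0$ for all $w_h\in W_h$, and since $a_h(u_h,c)=(\nabla u_h,\nabla c)-(u_h,\beta\cdot\nabla c)=0$ for every constant $c$, this identity in fact holds for all $w_h\in X_h^1$.

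It remains to prove $u_h=0$, which is the main difficulty. A globally affine $u_h$ lies in $H^1(\Omega)$, and since no Nitsche terms were added to $a_h$ in \eqref{bilin2}, $a_h(u_h,w_h)=a(u_h,w_h)=(\nabla u_h-u_h\beta,\nabla w_h)$; integrating by parts turns the reduced equation $a_h(u_h,w_h)=0$ into $(\mathcal{L}u_h,w_h)+\langle(\nabla u_h-\beta u_h)\cdot n,w_h\rangle_{\partial\Omega}=0$ for all $w_h\in X_h^1$. Writing $u_h=G\cdot(x-\bar x)$ with $G\in\mathbb{R}^d$ constant (the constant term fixed by $u_h\in L^2_0(\Omega)$), the vanishing of $s_{bc,p}(u_h,u_h)$ gives the pointwise boundary relation $\nabla u_h\cdot n=(i_h\beta\cdot n)\,u_h$ on $\partial\Omega$; on each of the $d$ non-parallel flat facets this is a polynomial identity in the tangential variable with $\nabla u_h$ constant, so in the generic case it already forces $G=0$. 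In the remaining configurations one inserts the boundary relation into the reduced identity, so that $\mathcal{L}u_h$ is tested against all of $X_h^1$ up to the weakly consistent mismatch $\langle u_h(i_h\beta-\beta)\cdot n,w_h\rangle_{\partial\Omega}$; together with the (almost exact) homogeneous boundary condition, $u_h$ then is a quasi-solution of the homogeneous problem \eqref{mproblem}, and since a nonzero affine function with vanishing mean over $\Omega$ must change sign while $m$ is strictly positive, $u_h$ cannot be a nonzero multiple of $m$ --- hence $u_h=0$, and the matrix of \eqref{FEM} is invertible. The routine parts (reduction to uniqueness, the partial-coercivity identity, the description of $\ker s_{cip}$, and the treatment of $z_h$) are immediate; the real obstacle is this last step --- obtaining $\nabla u_h=0$ from the stabilisations and the reduced equation uniformly in the mesh size and in $\beta$, and cleanly absorbing the weak consistency introduced by the replacement $\beta\mapsto i_h\beta$ in $s_{bc,p}$ for $k=1$ --- which a careful facet-by-facet case analysis combined with the exclusion $1\notin V_h$ (as in the third case of Proposition~\ref{invert_mat}) should settle.
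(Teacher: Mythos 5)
Your reduction to uniqueness, the partial-coercivity identity, the identification of the kernel of $s_{cip}$ with globally affine functions for $k=1$, and the argument giving $z_h=0$ all match the paper. The genuine gap is in the final step $u_h=0$, which you yourself flag as unresolved. The paper's proof of this step is short, purely algebraic and local to each boundary facet, and uses only the stabilisation --- it never invokes the reduced equation $a_h(u_h,\cdot)=0$, so your integration by parts and the detour through \eqref{mproblem} are unnecessary. From $s_{bc,p}(u_h,u_h)=0$ one gets the pointwise identity $\nabla u_h\cdot n=(i_h\beta\cdot n)\,u_h$ on $\partial\Omega$. On each polygonal side $\Gamma$ the left-hand side is a \emph{constant}, because $u_h\in\mathbb{P}_1(\Omega)$, while $(i_h\beta\cdot n)\,u_h\vert_\Gamma$ is a product of two affine traces and hence an element of $\mathbb{P}_2(\Gamma)$; equating a quadratic in the tangential variable with a constant kills its nonconstant part, so $u_h$ is constant on each side $\Gamma$. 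Hence $\nabla u_h$ is orthogonal to every tangent direction of $\partial\Omega$, and since the sides of the polygon are not all parallel this forces $\nabla u_h=0$; the zero-mean constraint built into $V_h$ then gives $u_h=0$. Your substitute for the ``remaining configurations'' --- declaring $u_h$ a ``quasi-solution'' of \eqref{mproblem} and excluding it via the positivity of $m$ and a sign-change argument --- is not a proof: a discrete affine function satisfying a perturbed variational identity with an uncontrolled consistency term $\left\langle u_h(i_h\beta-\beta)\cdot n,w_h\right\rangle_{\partial\Omega}$ has no reason to lie in the span of $m$, and no estimate is offered that would exclude it. In short, you located the correct boundary identity but missed the single observation (constant equals quadratic on each facet) that closes the argument, and replaced it with an unfinished case analysis.
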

\begin{proof}
As before we assume $f=0$ and observe that 
\[
s_p(u_h,u_h) + s_a(z_h,z_h) = 0.
\]
This implies $z_h,u_h \in \mathbb{P}_1(\Omega)$. Since $\|\nabla z_h
\cdot n\|_{\partial \Omega} = 0$ and $z_h$ has zero average, we conclude that $z_h=0$. For $u_h$
there holds
\[
\|\nabla u_h \cdot n + (i_h \beta \cdot n) u_h\|_{\partial \Omega} = 0.
\]
Since  $\nabla u_h \cdot n$ is constant on every polyhedral side
$\Gamma$ of $\Omega$ so is $(i_h \beta \cdot n) u_h$. But since $(i_h
\beta \cdot n) u_h\vert_{\Gamma} \in \mathbb{P}_2(\Gamma)$ we conclude
that both $i_h \beta$ and $u_h$ must be constant. Since this is true
for all faces $\Gamma$ of $\Omega$, $u_h$ is a constant
globally. We conclude by recalling that zero average was imposed on
the approximation space.
\end{proof}\\
In case $k \ge 2$, we let the norms $\|\cdot\|_+,\, \|\cdot\|_*$ be defined by
\eqref{dualnorm} and $\|\cdot\|_\mathcal{L}$ by
\[
\|v\|_{\mathcal{L}} := \|\mathcal{L} v\|_h +\|h^{\frac12} \jump{\nabla v
\cdot n_F}\|_{\mathcal{F}_{int}} + \|h^{\frac12} \nabla v \cdot n \|_{\partial \Omega} +
\beta_{\infty} \|h^{\frac12} v \|_{\partial \Omega}.
\]
When
$k=1$ we let the norm $\|\cdot\|_+$ be defined by \eqref{dualnorm},
but define
\[
\|v\|_* := \|h^{-1} v\|+\|h^{-\frac12} v\|_{\mathcal{F}}+ \epsilon(h)
\|v\|_{\partial \Omega}
\]
and, assuming $h<1$,
\[
\|v\|_{\mathcal{L}} := \|\mathcal{L} v\|_h +\|h^{\frac12} \jump{\nabla v
\cdot n_F}\|_{\mathcal{F}_{int}} + \|h^{\frac12} \nabla v \cdot n \|_{\partial \Omega} +
(1 + \beta_{\infty}) \|v \|_{\partial \Omega}+\epsilon(h)\|v\|.
\]
For the projection
operators
$\pi_V$ and $\pi_W$ we once again choose the $L^2$-projection.
\begin{proposition}
The bilinear form \eqref{bilin2} satisfies the continuities
\eqref{cont1} and \eqref{cont2} with $$\epsilon(h) 
\sim
h^{2} |\beta|_{W^{2,\infty}(\Omega)}.$$
\end{proposition}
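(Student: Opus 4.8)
The plan is to follow the template already established for the Dirichlet case in Section \ref{Dirichlet}, adapting it to the pure Neumann setting. For \eqref{cont1} I would start from the definition of $a_h(v-\pi_V v,x_h)$ with $a_h$ given by \eqref{bilin2}, and integrate by parts elementwise. Since $\pi_V$ is the $L^2$-projection, the volume term $(v-\pi_V v,\mathcal{L}^* x_h)_h$ can be replaced by $(v-\pi_V v,\mathcal{L}^* x_h - w_h)_h$ for any $w_h \in V_h$, and choosing $w_h$ via the Oswald-type quasi-interpolant as in \eqref{oswald_int}, the volume contribution is bounded by $\|v-\pi_V v\|_+(c_{os}|x_h|_{S_a} + \epsilon(h)\|x_h\|)$. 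The difference from the Dirichlet case is the boundary term produced by integration by parts of $-(u,\beta\cdot\nabla v)$; this leaves a boundary contribution involving $\langle v-\pi_V v,\, \nabla x_h\cdot n - \beta\cdot n\, x_h\rangle_{\partial\Omega}$ (or the $i_h\beta$ variant for $k=1$), which is exactly controlled by $\|v-\pi_V v\|_+$ times the boundary part of $|x_h|_{S_a}$ coming from \eqref{Neum_penalty}. Here I must be careful that $s_{bc,a}$ uses the pure Neumann penalty $\|h^{1/2}\nabla x_h\cdot n\|_{\partial\Omega}$, so any $\beta\cdot n\, x_h$ boundary factor has to be absorbed into the $\beta_\infty\|h^{1/2}x_h\|_{\partial\Omega}$-type term that appears in $\|\cdot\|_{\mathcal{L}}$ and handled through approximation, not through $s_a$; tracking which boundary term lands in which seminorm is the first place to be meticulous.

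For \eqref{cont2} I would again integrate by parts in the opposite direction, writing $a_h(u-u_h,y-\pi_W y)$ as the sum of an element residual term $(\mathcal{L}(u-u_h),y-\pi_W y)_h$, an interior-face jump term $\langle\jump{\nabla u_h\cdot n_F},y-\pi_W y\rangle_{\mathcal{F}_{int}}$, and a boundary term. Splitting $u - u_h = (u-\pi_V u) + (\pi_V u - u_h)$ and using $L^2$-orthogonality on the element residual as in the abstract CIP discussion, together with \eqref{oswald_int} and the discrete commutator property (\cite{Berto99}) to handle the non-solenoidal lower-order terms $(\nabla\cdot\beta)\xi_h$ and the $(\beta - i_h\beta)\cdot\nabla\xi_h$ commutator, gives the bound in terms of $\|u-\pi_V u\|_{\mathcal{L}}$, $|\pi_V u - u_h|_{S_p}$ and an $O(h^2|\beta|_{W^{2,\infty}})\|u-u_h\|$ perturbation — this is where $\epsilon(h)\sim h^2|\beta|_{W^{2,\infty}(\Omega)}$ originates, and since there is no reaction coefficient $c$ here the $|c|_{W^{1,\infty}}$ term of the Dirichlet case drops out. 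The boundary term needs the Nitsche-type boundary penalty $s_{bc,p}$: after adding and subtracting $\pi_V u$ the boundary factor is $\langle y-\pi_W y,\ \nabla u_h\cdot n - (i_h\beta)\cdot n\, u_h\rangle_{\partial\Omega}$, bounded by the boundary part of $\|y-\pi_W y\|_*$ times $|\pi_V u - u_h|_{S_p}$ plus an approximation remainder folded into $\|u-\pi_V u\|_{\mathcal{L}}$; the replacement of $\beta$ by $i_h\beta$ in the $k=1$ penalty contributes a further weakly-consistent perturbation that, as the text notes, is compatible with \eqref{approx1} and with the stated $\epsilon(h)$.

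The main obstacle I anticipate is the bookkeeping of boundary terms under the mismatch of primal and dual boundary conditions: the primal penalty $s_{bc,p}$ controls $\nabla u_h\cdot n - \beta\cdot n\, u_h$ on $\partial\Omega$ while the adjoint penalty $s_{bc,a}$ controls only $\nabla z_h\cdot n$, so when I integrate by parts in \eqref{cont1} versus \eqref{cont2} the boundary residuals that appear are not symmetric, and I must verify that in each direction the residual produced is exactly the one controlled by the seminorm that is available on that side (with the $\beta\cdot n$ piece either absorbed by a boundary term of $s_{bc,p}$ or pushed into the $(1+\beta_\infty)\|\cdot\|_{\partial\Omega}$ part of $\|\cdot\|_{\mathcal{L}}$). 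A secondary technical point is the zero-average constraint defining $V_h=W_h$: I should check that the $L^2$-projection $\pi_L$ onto $X_h^k\cap L^2_0(\Omega)$ still satisfies \eqref{approx} with the usual orthogonality, which holds because the constant is in the kernel of $\mathcal{L}$ and of the gradient so the subtracted mean does not spoil the residual estimates. Once these boundary identifications are pinned down, the remaining estimates are the same Cauchy–Schwarz, inverse-inequality and approximation arguments already carried out in the Dirichlet proposition, so the conclusion $\epsilon(h)\sim h^2|\beta|_{W^{2,\infty}(\Omega)}$ follows.
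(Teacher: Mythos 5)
Your plan reproduces the paper's proof essentially step for step: integration by parts in each direction, $L^2$-orthogonality with the Oswald quasi-interpolant and the discrete commutator property for the volume residual, the boundary penalties $s_{bc,a}$ and $s_{bc,p}$ for the boundary residuals, and the add-and-subtract of $i_h\beta$ with an elementwise trace inequality to handle the $k=1$ inconsistency, which is exactly where $\epsilon(h)\sim h^{2}|\beta|_{W^{2,\infty}(\Omega)}$ arises in the paper as well. The one bookkeeping worry you raise for \eqref{cont1} in fact resolves itself: the convection term $-(u,\beta\cdot\nabla v)$ is already in adjoint form and contributes no boundary term under the integration by parts, so the only boundary residual there is $\left<v-\pi_V v,\nabla x_h\cdot n\right>_{\partial\Omega}$, precisely what $s_{bc,a}$ penalises, while the combination $\nabla u_h\cdot n-\beta\cdot n\,u_h$ appears only in \eqref{cont2}, where $s_{bc,p}$ controls it.
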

\begin{proof}
As before we integrate by parts in $a_h(\cdot,\cdot)$ to obtain
\begin{multline*}
a_h(u - \pi_V u,x_h) = \sum_{F\in \mathcal{F}_{int}} \left<u - \pi_V
  u, \jump{\nabla x_h \cdot
    n_F} \right>_F +  (u - \pi_V u, \mathcal{L}^* x_h)_h \\[3mm]
+ \left<u - \pi_V u, \nabla x_h \cdot n _{\partial \Omega} \right>_{\partial \Omega}   = I+II+III.
\end{multline*}
The treatment of terms $I$ and $II$ are identical to the Dirichlet case.
Term $III$ is bounded using Cauchy-Schwarz inequality, recalling that
the Neumann condition is penalised in $s_a(\cdot,\cdot)$
\[
III \leq \|u - \pi_V u\|_+ |x_h|_{S_a}.
\]
The second
continuity follows in a similar fashion. We write
\begin{align*}
a_h(u - u_h,y - \pi_W y) &= \sum_{K \in \mathcal{T}_h} (\mathcal{L}
(u-u_h), y - \pi_W y)_K \\ & \quad  +
\sum_{F\in \mathcal{F}_{int}} \left< \jump{\nabla u_h \cdot
    n_F} , y - \pi_W y \right>_F \\ & \quad - \left<  \nabla u_h \cdot
  n _{\partial \Omega} - (\beta \cdot n _{\partial \Omega}) u_h, y -
  \pi_W y\right>_{\partial \Omega} \\ &= I+II+III
\end{align*}
and observe that the treatment of terms $I$ and $II$ is analogous with
the Dirichlet case. For term $III$, when $k\ge 2$ we recall that $\nabla u_h \cdot n  - (\beta
\cdot n) u_h$ is penalised in $s_p(\cdot,\cdot)$ and we may conclude as
before using a Cauchy-Schwarz inequality
\[
III \lesssim \|u - \pi_V u\|_* ( |u_h -
\pi_V u|_{S_p} + \|u - \pi_V u\|_{\mathcal{L}}) .
\]
For the case
$k=1$ we must take care to handle the lack of consistency. Therefore we add and
subtract $i_h \beta$ and use the boundary condition on $u$ to get
\begin{align*}
III =& \left<  \nabla (u_h-u) \cdot n _{\partial \Omega}  - (i_h \beta
  \cdot n _{\partial \Omega}) (u_h-u), y -
  \pi_W y\right>_{\partial \Omega}
\\&+\left<  (i_h \beta - \beta)\cdot n _{\partial \Omega} (u_h-u), y -
  \pi_W y\right>_{\partial \Omega}.
\end{align*}
First we add and subtract $\pi_V u$, so that $u-u_h = u-\pi_V u + \xi_h$,
$\xi_h :=  \pi_V u  - u_h$ and split the scalar products with
Cauchy-Schwarz inequality. For the first term we immediately have
\begin{multline*}
\left<  \nabla (u_h-u) \cdot n _{\partial \Omega}  - (i_h \beta
  \cdot n _{\partial \Omega}) (u_h-u), y -
  \pi_W y\right>_{\partial \Omega} \\
\leq \|y - \pi_W y\|_* (\|u - \pi_V u\|_{\mathcal{L}} + |\xi_h|_{S_p}).
\end{multline*}
For the $\pi_V u - u$ part of the second term we observe that
\[
\left<  (i_h \beta - \beta)\cdot n _{\partial \Omega} (\pi_V
  u-u), y -
  \pi_W y\right>_{\partial \Omega} \lesssim \|y -
  \pi_W y\|_{\partial \Omega} h^2 |\beta|_{W^{2,\infty}(\Omega)}
  \|\pi_V u - u\|_{\partial \Omega}.
\]
Applying an element wise trace inequality in the $\xi_h$ part of the second term, we have
\[
\left<  (i_h \beta - \beta)\cdot n \xi_h, y -
  \pi_W y \right>_{\partial
  \Omega} \lesssim \|y -
  \pi_W y\|_{\partial
  \Omega} \|i_h \beta - \beta\|_{L^{\infty}(\partial \Omega)}
h^{-\frac12} \|\xi_h\|.
\]
Then we use the definition of the norms, in
particular that $ \|h^{-\frac12}(y - \pi_W y) \|_{\partial \Omega} \leq
\|y - \pi_W y\|_*$  to obtain
\[
III \lesssim \|y - \pi_W y\|_* (\|u - \pi_V u\|_{\mathcal{L}} +
|\xi_h|_{S_p} + h^{2} |\beta|_{W^{2,\infty}}\|\xi_h\|).
\]
Using the triangular inequality
$\|\xi_h + u - u\| \leq \|\pi_V u - u\| + \|u - u_h\|$, we obtain
\begin{align*}
III \lesssim   \|y - \pi_W y\|_* (\|u - \pi_V u\|_{\mathcal{L}}+
|\xi_h|_{S_p} + \epsilon(h)\|u-u_h\|)
\end{align*}
with $\epsilon \sim h^2 |\beta|_{W^{2,\infty}(\Omega)}$. The proof is complete.
\end{proof}
\begin{remark}
It is straightforward to verify that Lemma \ref{stab_conv} holds.
The assumptions of Theorem \ref{main}, however still are not satisfied since
we want to use the solutions of the problems $\mathcal{L}^* \varphi =
u - u_h$ and $\mathcal{L} \psi = z_h$, but the solution $\varphi$ 
will in general not exist since $u-u_h$ does not
satisfy the second compatibility condition of equation \eqref{compatibility}. Instead we will use $m$, the
solution of \eqref{mproblem} as weight, as suggested in \cite{CF88}, and solve the well-posed problem
\[
\mathcal{L}^* \varphi =
(u - u_h)/m.
\]
We may then write
\[
\|(u - u_h) m^{- \frac12}\|^2 + \| z_h \|^2 = (u - u_h,
\mathcal{L^*} \varphi) + (\mathcal{L} \psi, z_h)
\]
and proceed as in Theorem \ref{main}, now using the stability estimate
\[
|\varphi|_{H^2(\Omega)} \leq c_{a,\Omega} \|(u - u_h) m^{-1}\| \leq c_{a,\Omega}/m_{min}^{\frac12} \|(u - u_h) m^{-1/2}\|
\]
 to obtain
\[
\|(u - u_h) m^{- \frac12}\| + \| z_h \| \lesssim h^{k+1} |u|_{H^{k+1}(\Omega)}.
\]
For an estimate in the unweighted $L^2$-norm we observe that
\[
M^{-\frac12} \|u - u_h\| \leq \|(u - u_h) m^{-
  \frac12}\|.
\]
Convergence follows by Lemma \ref{stab_conv} and the modified
Theorem \ref{main}. Observe that the constants in $\epsilon(h)$ now
depends on the (unknown) minimum value of $m$.
\end{remark}
\begin{remark}
In practice the zero average condition can be imposed using Lagrange
multipliers. The above analysis holds for that case after minor modifications.
\end{remark}
\subsection{The Cauchy problem}
We consider the case of a Helmholtz-type problem where both the solution
itself and its normal gradient is specified on one portion of the
domain and the other portion is free. We let $\Gamma_V$ and
$\Gamma_W$ be connected subsets of $\partial \Omega$ such that $\partial \Omega :=
\bar \Gamma_V \cup \bar \Gamma_W$ and $\Gamma_V \cap \Gamma_W = \emptyset$. We will
consider the problem, $\kappa \in \mathbb{R}$, 
\begin{align}\label{Cauchy_f}
-\Delta u + \kappa u& = f \mbox{ in } \Omega\\
\nabla u \cdot n = u &= 0 \mbox{ on } \Gamma_V,\nonumber
\end{align}
with dual problem
\begin{align}\label{Cauchy_d}
-\Delta z + \kappa z & = g \mbox{ in } \Omega\\
\nabla z \cdot n = z &= 0 \mbox{ on } \Gamma_W. \nonumber
\end{align}
The weak formulations \eqref{forward} and \eqref{adjoint}  are
obtained by setting 
$$V:= \{v \in H^1(\Omega): v\vert_{\Gamma_V} = 0\}$$ 
and 
$$W:= \{v \in H^1(\Omega): v\vert_{\Gamma_W} = 0\}$$ 
and defining 
$$
a(u,v) := (\nabla u,\nabla v)+ \kappa (u,v), \quad \forall u \in V,\, v \in W.
$$
Note that both symmetry and G\aa rdings inequality fail in this case because the
functions in the bilinear form have to be taken in different spaces
and hence the choice $v=u$ is prohibited.

To design a suitable discrete formulation \eqref{FEM} for this problem we generalise the ideas of the Nitsche type weak imposition of
boundary condition. Observe that in
this case boundary conditions imposed using penalty in the standard
fashion can not be consistent for both the primal and the adjoint problems,
since the primal and dual solution are zero on different parts of the
boundary. It is therefore important in this case that two stabilisation
operators are used, one for the primal and one for the adjoint.  We define the approximation spaces $V_h = W_h := X_h^k$. We propose the bilinear form 
\begin{equation}\label{bilin_Cauchy}
a_h(u_h,v_h):=  (\nabla u_h,\nabla v_h)+ \kappa (u_h,v_h)- \left< \nabla v_h \cdot n, u_h\right>_{\Gamma_V}- \left< \nabla u_h \cdot n, v_h\right>_{\Gamma_W}
\end{equation}
and for the stabilisation we use
\begin{equation}\label{stab_Cauchy}
s_x(u_h,v_h) := s_{cip}(u_h,v_h) + s_{bc,x}(u_h,v_h),\quad
x= a,p
\end{equation}
where $s_{cip}(\cdot,\cdot)$ is given by \eqref{jumpstab1}, with
$\gamma_{F,i} \sim 1$, $i=1,2$, and 
\[
s_{bc,x}(u_h,v_h) := \int_X (h^{-1} u_h\, v_h + h \nabla u_h \cdot
n \nabla v_h \cdot n) ~\mbox{d}s, 
\]
where $X =  \Gamma_V$ 
  for $x=p$  and $X =  \Gamma_W$ 
  for $x=a$.
If some part of the boundary is equipped with Dirichlet or Neumann
boundary conditions this is imposed as described in Section
\ref{subsec:bc}.
\begin{proposition}(Existence of
  discrete solution for $k=1$)
Define \eqref{FEM} by the bilinear forms \eqref{bilin_Cauchy} and \eqref{stab_Cauchy}. Let
$k=1$ in $V_h$. Then there exists a unique solution $(u_h,z_h) \in [V_h]^2$ to \eqref{FEM}.
\end{proposition}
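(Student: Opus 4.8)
The plan is to follow the template of Proposition~\ref{invert_mat}: the system \eqref{FEM} is square and linear, so uniqueness of solutions is equivalent to invertibility of its matrix, and it is enough to show that $f=0$ forces $u_h=z_h=0$. First I would set $f=0$ (whence the exact solution is $u=0$ and the right-hand side term $-s_p(u,\cdot)$ also vanishes), test the first equation of \eqref{FEM} with $w_h=z_h$ and the second with $v_h=u_h$, and subtract so that the terms $a_h(u_h,z_h)$ cancel; this is exactly the partial coercivity \eqref{partial_coerc} and it gives $|u_h|_{S_p}^2+|z_h|_{S_a}^2=0$. Since $s_p=s_{cip}+s_{bc,p}$ and $s_a=s_{cip}+s_{bc,a}$ are sums of positive semidefinite forms, each summand must vanish, so in particular $s_{cip}(u_h,u_h)=s_{cip}(z_h,z_h)=0$ and $s_{bc,p}(u_h,u_h)=s_{bc,a}(z_h,z_h)=0$.

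Next I would exploit $s_{cip}(u_h,u_h)=0$ with $k=1$: in \eqref{jumpstab1} the Laplacian-jump term vanishes identically for piecewise affine functions, and the vanishing of all interior gradient jumps forces $\nabla u_h$ to be globally constant on the connected mesh, i.e.\ $u_h\in\mathbb{P}_1(\Omega)$; likewise $z_h\in\mathbb{P}_1(\Omega)$. This already reproduces the bound $2(d+1)$ on the kernel dimension from Proposition~\ref{invert_mat}. Then I would use the boundary stabilisation \eqref{stab_Cauchy}: $s_{bc,p}(u_h,u_h)=0$ means $u_h=0$ and $\nabla u_h\cdot n=0$ on $\Gamma_V$, and symmetrically $z_h=0$, $\nabla z_h\cdot n=0$ on $\Gamma_W$.

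The final, and really the only non-routine, step is the geometric argument that these Cauchy conditions annihilate the affine kernel. Here $\Gamma_V$ contains a subset of positive $(d-1)$-measure lying in a single face $\Gamma$ of $\partial\Omega$; an affine function vanishing on such a set vanishes on the whole hyperplane of $\Gamma$, so $\nabla u_h$ is parallel to the normal $n$ of $\Gamma$. Combined with $\nabla u_h\cdot n=0$ on $\Gamma_V$ this forces $\nabla u_h=0$, hence $u_h$ is a constant, and being zero on $\Gamma_V$ it is identically zero; the same reasoning on $\Gamma_W$ gives $z_h\equiv0$. I expect this to be the delicate point precisely because it is where the structure of \eqref{stab_Cauchy} matters: a boundary portion carrying a Dirichlet \emph{or} a Neumann condition alone gives only one scalar constraint on a constant gradient and would not suffice (compare the ``two non-parallel sides'' hypothesis of Proposition~\ref{invert_mat}), whereas pinning down \emph{both} $u_h$ and $\nabla u_h\cdot n$ on a single face determines an affine function completely, so the argument works for arbitrary admissible splittings $\partial\Omega=\bar\Gamma_V\cup\bar\Gamma_W$. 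Concluding, the homogeneous system has only the trivial solution, so \eqref{FEM} is uniquely solvable in $[V_h]^2$.
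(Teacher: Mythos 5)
Your proof is correct and follows essentially the same route as the paper: set $f=0$, invoke the partial coercivity \eqref{partial_coerc} to kill both stabilisation semi-norms, deduce $u_h,z_h\in\mathbb{P}_1(\Omega)$ from the vanishing CIP terms, and then use the vanishing Cauchy penalties on $\Gamma_V$ (resp.\ $\Gamma_W$) to annihilate the affine kernel. The only difference is presentational: the paper delegates the last step to case $(2)$ of Proposition \ref{invert_mat}, whereas you spell out the underlying geometric argument (value plus normal derivative on a single face determine an affine function) directly.
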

\begin{proof}
Let $f=0$, by \eqref{partial_coerc} there holds, $u_h, z_h \in
\mathbb{P}_1(\Omega)$ and $u_h\vert_{\Gamma_V} = \nabla u_h \cdot
n\vert_{\Gamma_V} = 0$ as well as $z_h\vert_{\Gamma_W} = \nabla z_h \cdot
n\vert_{\Gamma_W} = 0$, by which we conclude that the matrix is
invertible using case $(2)$ of Proposition \ref{invert_mat}.
\end{proof}\\
For the error analysis we once again choose the interpolants $\pi_V$
and $\pi_W$ to be the standard $L^2$-projection $\pi_L$.
We will now prove that the assumptions
\eqref{consist1}-\eqref{consist2} and \eqref{cont1}-\eqref{cont2}
are satisfied.
\begin{lemma}(Consistency of bilinear form) \label{lem_consist_Cauchy}
The bilinear form \eqref{bilin_Cauchy}  satisfies \eqref{consist1} and \eqref{consist2}.
\end{lemma}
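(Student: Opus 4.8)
The plan is to prove both consistency identities by integration by parts, exploiting the decisive feature of the Cauchy problem that the data on $\Gamma_V$ (resp.\ $\Gamma_W$) is overdetermined and \emph{homogeneous}: the exact primal solution $u$ of \eqref{Cauchy_f} satisfies both $u|_{\Gamma_V}=0$ and $\nabla u\cdot n|_{\Gamma_V}=0$, and symmetrically the exact dual solution $z$ of \eqref{Cauchy_d} satisfies $z|_{\Gamma_W}=0$ and $\nabla z\cdot n|_{\Gamma_W}=0$. Since $\mathcal{L}=-\Delta+\kappa$ is formally self-adjoint, \eqref{consist2} will follow from an argument strictly parallel to that for \eqref{consist1} with $(\Gamma_V,V)$ replaced by $(\Gamma_W,W)$, so I concentrate on \eqref{consist1}.

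Fixing $u\in V\cap H^2(\Omega)$ solving \eqref{Cauchy_f} and an arbitrary $\varphi\in W+W_h$, I would start from \eqref{bilin_Cauchy} and move the gradient off $u$. Since $u\in H^2(\Omega)$ we have $\nabla u\in H(\mathrm{div};\Omega)$ while $\varphi\in H^1(\Omega)$, so Green's formula gives $(\nabla u,\nabla\varphi)=-(\Delta u,\varphi)+\langle\nabla u\cdot n,\varphi\rangle_{\partial\Omega}$ globally, with no interior-face contributions even when $\varphi$ is only piecewise polynomial, because the normal trace of $\nabla u$ is single valued. Substituting into \eqref{bilin_Cauchy} and regrouping using $\mathcal{L}u=-\Delta u+\kappa u$ yields $a_h(u,\varphi)=(\mathcal{L}u,\varphi)+\langle\nabla u\cdot n,\varphi\rangle_{\partial\Omega\setminus\Gamma_W}-\langle\nabla\varphi\cdot n,u\rangle_{\Gamma_V}$.

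The concluding step uses the geometric hypothesis $\partial\Omega=\bar\Gamma_V\cup\bar\Gamma_W$ with $\Gamma_V\cap\Gamma_W=\emptyset$: up to a set of surface measure zero $\partial\Omega\setminus\Gamma_W=\Gamma_V$, so the two leftover boundary terms both live on $\Gamma_V$. There $\langle\nabla u\cdot n,\varphi\rangle_{\Gamma_V}=0$ because $\nabla u\cdot n$ vanishes on $\Gamma_V$, and $\langle\nabla\varphi\cdot n,u\rangle_{\Gamma_V}=0$ (and in particular is well defined) because $u$ vanishes on $\Gamma_V$. Hence $a_h(u,\varphi)=(\mathcal{L}u,\varphi)$, which is \eqref{consist1}. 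For \eqref{consist2} one instead moves the gradient onto $z$, uses $\partial\Omega\setminus\Gamma_V=\Gamma_W$, and invokes $\nabla z\cdot n=0$ and $z=0$ on $\Gamma_W$.

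I do not expect a genuine obstacle: the substance of the lemma is merely that the two Nitsche-type terms $-\langle\nabla v_h\cdot n,u_h\rangle_{\Gamma_V}$ and $-\langle\nabla u_h\cdot n,v_h\rangle_{\Gamma_W}$ in \eqref{bilin_Cauchy} are placed precisely so as to cancel the boundary residual produced by integration by parts on the ``wrong'' portion of $\partial\Omega$, and that the homogeneity of the Cauchy data removes the corresponding right-hand-side contributions. The only point needing care is the global integration by parts against the non-conforming component of $\varphi\in W_h$, for which one relies on $u\in H^2(\Omega)$ so that the normal trace of $\nabla u$ is single valued and no spurious interior jump terms appear.
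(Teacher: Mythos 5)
Your proof is correct and follows essentially the same route as the paper: integrate by parts, split the boundary integral into $\Gamma_V$ and $\Gamma_W$, and use the overdetermined homogeneous Cauchy data ($u=\nabla u\cdot n=0$ on $\Gamma_V$, resp.\ $z=\nabla z\cdot n=0$ on $\Gamma_W$) to cancel the residual boundary terms against the two Nitsche terms in \eqref{bilin_Cauchy}. The paper's proof is the mirror image of yours (it starts from $(\mathcal{L}u,\varphi)$ and builds up $a_h$ rather than reducing $a_h$), which is an immaterial difference.
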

\begin{proof}
By an integration by parts we see that for $u$ solution of \eqref{Cauchy_f}
\begin{align*}
(-\Delta u + \kappa u, v+v_h) & =  (\nabla u, \nabla  (v + v_h)) +
(\kappa u, v + v_h)  \\
- & \underbrace{\left<\nabla u
  \cdot n, v + v_h \right>_{\Gamma_W}}_{\mbox{ since } \nabla u\cdot n = 0 \mbox{ on }
\Gamma_V} - \underbrace{\left<\nabla (v + v_h)
  \cdot n, u\right>_{\Gamma_V}}_{\mbox{ since } u = 0 \mbox{ on }
\Gamma_V} = a_h(u, v+v_h).
\end{align*}
Similarly for $z$ solution of \eqref{Cauchy_d} consistency follows by
observing that
\[
(v+v_h, -\Delta z) = (\nabla  (v + v_h), \nabla z ) -\underbrace{ \left<\nabla z
  \cdot n, v + v_h \right>_{\Gamma_V}}_{\mbox{ since } \nabla z \cdot n = 0 \mbox{ on } \Gamma_W} - \underbrace{\left<\nabla (v + v_h)
  \cdot n, z\right>_{\Gamma_W}}_{\mbox{ since } z = 0 \mbox{ on } \Gamma_W}.
\]
\end{proof}\\
We define the norms $\|\cdot\|_+$ and $\|\cdot\|_*$ by
\[
\| v \|_+:=\|h^{-\frac12} v\|_{\mathcal{F}_{int}}+ \|h^{-1} v\| + \|h^{-\frac12}
v\|_{\Gamma_W} + \|h^{\frac12}
\nabla v \cdot n\|_{\Gamma_W},
\]
\[
\| v \|_*:= \|h^{-\frac12} v\|_{\mathcal{F}_{int}} + \|h^{-1} v\| +
\|h^{-\frac12}
v\|_{\Gamma_V} + \|h^{\frac12}
\nabla v \cdot n\|_{\Gamma_V}.
\]
and
$$
\|v\|_{\mathcal{L}} := \|h \mathcal{L} v\|_h + \|h^{\frac12} \jump{\nabla v \cdot n_F}\|_{\mathcal{F}_{int}}+\|h^{-\frac12}
v\|_{\Gamma_V} + \|h^{\frac12}
\nabla v \cdot n\|_{\Gamma_V}.
$$
It is straightforward to show \eqref{approx1} and \eqref{approx2}.
\begin{proposition}\label{prop_cont_Cauchy}
For $a_h(\cdot,\cdot)$ defined by \eqref{bilin_Cauchy}, the continuities \eqref{cont1} and \eqref{cont2} hold with $\epsilon(h) = 0$.
\end{proposition}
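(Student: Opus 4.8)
The plan is to prove both continuities by the elementwise integration-by-parts strategy of the abstract CIP discussion above, exploiting that $\pi_V=\pi_W=\pi_L$ is the $L^2$-projection and that $\kappa$ is a constant, so no data oscillation appears and $\epsilon(h)=0$.

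First, for \eqref{cont1} I would set $e:=v-\pi_V v$, note that $e\in H^2(\Omega)$ is continuous across interior faces, and integrate by parts elementwise in $(\nabla e,\nabla x_h)$. Summing, the interior contributions give $\sum_{F\in\mathcal{F}_{int}}\langle e,\jump{\nabla x_h\cdot n_F}\rangle_F$ and the exterior ones $\langle e,\nabla x_h\cdot n\rangle_{\partial\Omega}$; since $\partial\Omega=\bar\Gamma_V\cup\bar\Gamma_W$, the Nitsche term $-\langle\nabla x_h\cdot n,e\rangle_{\Gamma_V}$ in $a_h$ exactly cancels the $\Gamma_V$-part and one is left with
\[
a_h(e,x_h)=(e,\mathcal{L}^* x_h)_h+\sum_{F\in\mathcal{F}_{int}}\langle e,\jump{\nabla x_h\cdot n_F}\rangle_F+\langle e,\nabla x_h\cdot n\rangle_{\Gamma_W}-\langle\nabla e\cdot n,x_h\rangle_{\Gamma_W}.
\]
In the first term I use $L^2$-orthogonality of $\pi_L$ to replace $\mathcal{L}^* x_h$ by $\mathcal{L}^* x_h-w_h$ for the best $w_h\in X_h^k$, apply Cauchy--Schwarz with weight $h$, and bound the residual by $s_{cip}(x_h,x_h)^{\frac12}\le|x_h|_{S_a}$ through \eqref{discrete_interp}--\eqref{abstract_stab2} (here $\epsilon(h)=0$ since $\kappa$ is constant; for $k=1$ the term vanishes outright because $\mathcal{L}^* x_h=\kappa x_h\in X_h^1$). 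The three remaining terms are treated by Cauchy--Schwarz face-by-face and boundary-by-boundary, matching $\|h^{-\frac12}e\|_{\mathcal{F}_{int}}$ against $\|h^{\frac12}\jump{\nabla x_h\cdot n_F}\|_{\mathcal{F}_{int}}\lesssim|x_h|_{S_a}$, $\|h^{-\frac12}e\|_{\Gamma_W}$ against $\|h^{\frac12}\nabla x_h\cdot n\|_{\Gamma_W}\le s_{bc,a}(x_h,x_h)^{\frac12}$, and $\|h^{\frac12}\nabla e\cdot n\|_{\Gamma_W}$ — a summand of $\|e\|_+$ — against $\|h^{-\frac12}x_h\|_{\Gamma_W}\le s_{bc,a}(x_h,x_h)^{\frac12}$. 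Collecting gives \eqref{cont1} with $\epsilon(h)=0$.

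For \eqref{cont2} I integrate by parts in the opposite direction in $a_h(u-u_h,\eta)$ with $\eta:=y-\pi_W y$; since $u_h$ lies only in $H^1$, the interior faces now contribute $-\sum_F\langle\jump{\nabla u_h\cdot n_F},\eta\rangle_F$ (using $u\in H^2$), and after cancelling the $\Gamma_W$-Nitsche term against the natural boundary term the only surface integrals left are on $\Gamma_V$, namely $\langle\nabla(u-u_h)\cdot n,\eta\rangle_{\Gamma_V}-\langle\nabla\eta\cdot n,u-u_h\rangle_{\Gamma_V}$. I then split $u-u_h=(u-\pi_V u)+\xi_h$, $\xi_h:=\pi_V u-u_h\in X_h^k$, throughout: the element residual of $u-\pi_V u$ goes into $\|h\mathcal{L}(u-\pi_V u)\|_h\le\|u-\pi_V u\|_{\mathcal{L}}$; the element residual of $\xi_h$ is filtered by $L^2$-orthogonality of $\pi_L$ against $\eta$ and bounded by $s_{cip}(\xi_h,\xi_h)^{\frac12}\le|\xi_h|_{S_p}$; the interior jump $\jump{\nabla u_h\cdot n_F}$ rewrites as $-\jump{\nabla(u-\pi_V u)\cdot n_F}-\jump{\nabla\xi_h\cdot n_F}$, the first summand absorbed into $\|u-\pi_V u\|_{\mathcal{L}}$ and the second into $\|h^{\frac12}\jump{\nabla\xi_h\cdot n_F}\|_{\mathcal{F}_{int}}\lesssim s_{cip}(\xi_h,\xi_h)^{\frac12}$; and on $\Gamma_V$ the $u-\pi_V u$ contributions land in $\|u-\pi_V u\|_{\mathcal{L}}$ (which carries $\|h^{-\frac12}\cdot\|_{\Gamma_V}$ and $\|h^{\frac12}\nabla\cdot\ n\|_{\Gamma_V}$), while the $\xi_h$ contributions are bounded by $s_{bc,p}(\xi_h,\xi_h)^{\frac12}\le|\xi_h|_{S_p}$, using that $\|h^{\frac12}\nabla\eta\cdot n\|_{\Gamma_V}$ is a summand of $\|\eta\|_*$. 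This yields \eqref{cont2} with $\epsilon(h)=0$.

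The routine part is the string of Cauchy--Schwarz estimates; the point requiring care — the main obstacle — is the bookkeeping of which part of $\partial\Omega$ each surface term lives on after the two integrations by parts, so that the asymmetric Nitsche terms in $a_h$ cancel the natural boundary terms on $\Gamma_V$ (primal) resp. $\Gamma_W$ (adjoint) and the leftover integrals are exactly those penalised by $s_{bc,a}$ on $\Gamma_W$ inside $|x_h|_{S_a}$, resp. by $s_{bc,p}$ on $\Gamma_V$ inside $|\xi_h|_{S_p}$, and are in turn dominated by the matching boundary summands of $\|\cdot\|_+$ and $\|\cdot\|_*$. The vanishing of $\epsilon(h)$ then follows simply because $\kappa$ is constant, so no interpolation of the coefficients enters \eqref{discrete_interp}.
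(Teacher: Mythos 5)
Your argument is correct and is essentially the paper's own proof: the same elementwise integration by parts in each direction, the same cancellation of the Nitsche terms against the natural boundary terms leaving only $\Gamma_W$ (for \eqref{cont1}) resp.\ $\Gamma_V$ (for \eqref{cont2}) integrals, the same use of $L^2$-orthogonality of $\pi_L$ together with \eqref{oswald_int} to absorb the element residual into $|\cdot|_{S_a}$ resp.\ $|\cdot|_{S_p}$, and the same splitting $u-u_h=(u-\pi_V u)+\xi_h$ with term-by-term Cauchy--Schwarz against the summands of $\|\cdot\|_+$, $\|\cdot\|_*$ and $\|\cdot\|_{\mathcal{L}}$. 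The observation that $\epsilon(h)=0$ because $\kappa$ is constant matches the paper's conclusion.
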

\begin{proof}
We proceed as before using an integration by parts in
\eqref{bilin_Cauchy} to obtain
\begin{align*}
a_h(v &- \pi_V v, x_h)  = \sum_{F \in \mathcal{F}_{int}} \left<v- \pi_V
v, \jump{\nabla x_h\cdot n_F}\right>_{F} +  (v - \pi_V v,
-\Delta x_h+ \kappa x_h)_h\\
& + \left< v - \pi_V v, \nabla x_h \cdot
n\right>_{\Gamma_W} - \left< \nabla(v - \pi_V v)\cdot n, x_h
\right>_{\Gamma_W} = I + II + III + IV.
\end{align*}
The first sum $I$ is upper bounded as before using the Cauchy-Schwarz
inequality and for the second sum, we use the orthogonality of the
$L^2$-projection, $(v - \pi_V v,\kappa x_h) = 0$ and the discrete
interpolation inequality \eqref{oswald_int} leading to
\[
I + II \lesssim \|u - \pi_V u\|_+ |x_h|_{S_a}.
\]
For the terms $III$ and $IV$ we note that by the definition of
$\|\cdot\|_+$ and $|\cdot|_{S_a}$ there also holds
\[
III+IV \leq \|u - \pi_V u\|_+ |x_h|_{S_a}.
\]
This ends the proof of \eqref{cont1}. The proof of \eqref{cont2} is
similar. Using integration by parts in the other direction we have
\begin{align*}
a_h(u - u_h , y - \pi_W y) &=  (-\Delta(u-u_h) + \kappa (u -
u_h),  y - \pi_W y)_h \\
& + \sum_{F \in \mathcal{F}_{int}} \left<\jump{\nabla u_h\cdot n_F},   y - \pi_W y\right>_F
+ \left< u - u_h, \nabla (y - \pi_W y) \cdot
n\right>_{\Gamma_V} \\
&+ \left< \nabla(u - u_h)\cdot n, y - \pi_W y
\right>_{\Gamma_V}  = I + II + III+IV.
\end{align*}
Using the same arguments as before, adding and subtracting $\pi_V u$
in all the terms in the left slot we have for the term $I$, using
$\xi_h = \pi_V u - u_h$,
\begin{align*}
I & = (-\Delta(u-\pi_V u)+\kappa (u -
\pi_V u),  y - \pi_W y)_h- 
(\Delta\xi_h - I_{os} \Delta\xi_h ,  y - \pi_W y)_h \\
& \qquad  \lesssim( \|u - \pi_V u\|_{\mathcal{L}}+ |\xi_h|_{S_p})\| y - \pi_W y\|_*.
\end{align*}
By the definition of the stabilisation operator and the fact that $u =
\nabla u \cdot n = 0$ on $\Gamma_V$ we may once again add and subtract
$\pi_V u$ in the terms $II$ and $III$ to obtain
\begin{align*}
II+III+IV &= \left<\jump{\nabla
    u_h\cdot n_F},   y - \pi_W y\right>_{\mathcal{F}_{int}}\\
& \qquad
- \left< u_h, \nabla ( y - \pi_W y) \cdot
n\right>_{\Gamma_V} - \left< \nabla  u_h\cdot n, y - \pi_W y
\right>_{\Gamma_V} \\
& \leq \| y - \pi_W y\|_* (\|u - \pi_V u\|_{\mathcal{L}} + |\xi_h|_{S_p}).
\end{align*}
By which we conclude.
\end{proof}
\begin{corollary}\label{cor:useless}
Assume that the problems \eqref{Cauchy_f} and \eqref{Cauchy_d} admit
unique solutions for which \eqref{smooth} holds. Then the conclusions
of Theorem \ref{main} holds for \eqref{FEM} defined by $V_h$,
\eqref{bilin_Cauchy} and \eqref{stab_Cauchy}.
\end{corollary}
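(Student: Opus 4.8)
The plan is to verify that the formulation \eqref{FEM} built from $V_h=W_h=X_h^k$, the bilinear form \eqref{bilin_Cauchy} and the stabilisation \eqref{stab_Cauchy} satisfies all the hypotheses \eqref{galortho}--\eqref{H2} and the smallness condition \eqref{eps_cond} required by Theorem \ref{main}, and then simply to invoke that theorem. Most of the work has already been done: Lemma \ref{lem_consist_Cauchy} gives the strong consistency relations \eqref{consist1}--\eqref{consist2}, which, as explained in Section \ref{sec:abstract}, yield the Galerkin orthogonalities \eqref{galortho}; and Proposition \ref{prop_cont_Cauchy} supplies the continuities \eqref{cont1}--\eqref{cont2} with $\epsilon(h)=0$. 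Since $\epsilon(h)\equiv 0$, the condition \eqref{eps_cond}, namely $c_{a,\gamma,\Omega}\,h\,\epsilon(h)\le\tfrac16$, holds trivially for every $h>0$, so no mesh restriction is needed and the low-order perturbation terms in Lemma \ref{stab_conv} and Theorem \ref{main} drop out entirely.

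It then remains to record the approximation estimates \eqref{approx1}--\eqref{approx2} and the upper bounds \eqref{H2} for the choice $\pi_V=\pi_W=\pi_L$. These follow term by term from the standard estimate \eqref{approx} for $\pi_L$ together with elementwise trace inequalities: the volume terms $\|h\mathcal{L}(v-\pi_V v)\|_h$, $\|v-\pi_V v\|$ and the interior-face jump terms $\|h^{1/2}\jump{\nabla(v-\pi_V v)\cdot n_F}\|_{\mathcal{F}_{int}}$, $\|h^{3/2}\jump{\Delta(v-\pi_V v)}\|_{\mathcal{F}_{int}}$ are of order $h^k|v|_{H^{k+1}(\Omega)}$, while the boundary contributions $\|h^{-1/2}(v-\pi_V v)\|_{\Gamma_V}$ and $\|h^{1/2}\nabla(v-\pi_V v)\cdot n\|_{\Gamma_V}$ (and the analogous ones on $\Gamma_W$ for $\pi_W$) are controlled by a trace inequality followed by \eqref{approx}, the extra power of $h$ in the first one coming from $v|_{\Gamma_V}=0$ for $v\in V$. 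For \eqref{H2} one argues similarly, writing $\jump{\nabla\pi_V v}=\jump{\nabla(\pi_V v-v)}$ (valid since $v\in H^2(\Omega)$ has continuous gradient traces) and, for the boundary penalty on $\Gamma_V$, using that \eqref{H2} is invoked in the proof of Theorem \ref{main} only on the auxiliary solutions $\varphi,\psi$ of \eqref{Cauchy_d}, \eqref{Cauchy_f}, which satisfy the homogeneous Cauchy conditions, so that $\nabla\pi_V\psi\cdot n|_{\Gamma_V}=\nabla(\pi_V\psi-\psi)\cdot n|_{\Gamma_V}$; an inverse inequality absorbs the $h^{3/2}\jump{\Delta\cdot}$ term, and $H^2$-stability of $\pi_L$ under the quasi-uniformity assumption yields the factor $h$.

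The only genuinely delicate point is this bookkeeping of the boundary terms: the normal-derivative penalties on $\Gamma_V$ and $\Gamma_W$ carried by $s_p$ and $s_a$ must be estimated using the boundary conditions $\nabla u\cdot n|_{\Gamma_V}=0$ of the exact primal solution (and the corresponding condition for $\psi$ in \eqref{H2}), rather than as generic $H^1$ traces, since otherwise the factor $h$ would be lost. Everything else is routine. Once \eqref{galortho}--\eqref{H2} and \eqref{eps_cond} are in hand, Theorem \ref{main} applies verbatim and gives the existence of a unique discrete pair $(u_h,z_h)$ together with $\|u-u_h\|+h\|\nabla(u-u_h)\|+\|z_h\|\lesssim h^{k+1}|u|_{H^{k+1}(\Omega)}\lesssim h^{2}\|f\|$, which is the asserted conclusion.
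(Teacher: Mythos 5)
Your proposal is correct and follows essentially the same route as the paper: collect the already-established consistency (Lemma \ref{lem_consist_Cauchy}) and continuity (Proposition \ref{prop_cont_Cauchy}) results, verify \eqref{approx1}--\eqref{H2} for $\pi_V=\pi_W=\pi_L$ by standard interpolation and trace estimates, and invoke Theorem \ref{main}. The paper dismisses the approximation properties as ``straightforward to verify''; your careful bookkeeping of the boundary penalty terms (using the homogeneous Cauchy data of $u$, $\varphi$ and $\psi$ to gain the needed power of $h$) and the observation that $\epsilon(h)=0$ renders \eqref{eps_cond} vacuous simply make explicit what the paper leaves implicit.
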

\begin{proof}
In Lemma \ref{lem_consist_Cauchy} we verified the consistencies \eqref{consist1} and
\eqref{consist2}.
In Proposition \ref{prop_cont_Cauchy} we verified the continuities \eqref{cont1} and
\eqref{cont2}.
It is straightforward to verify that \eqref{approx1}-\eqref{approx2}
hold for $\pi_V=\pi_W=\pi_L$ and $s_p(\cdot,\cdot)$,
$s_a(\cdot,\cdot)$ defined by \eqref{stab_Cauchy} under the
assumptions on the mesh and the regularity assumptions on the solution.
\end{proof}
\begin{remark}
Admittedly Corollary \ref{cor:useless} is of purely academic interest
since the Cauchy problem under consideration in general is ill-posed,
with very weak stability properties. As we shall see in the numerical
section the method nevertheless returns useful approximations. 
An example of a sufficient condition for Theorem \ref{main} to result in a convergence
estimate, if $u$ is smooth, is that there exists $M \in \mathbb{R}^+$ and $s \in
\mathbb{R}$, with $s>-k$ such that $\|\varphi - \pi_V \varphi\|_*  \leq M h^s$, for
all $u - u_h$, with $\varphi$ the solution of \eqref{adjoint}, with $g = u-u_h$. 
The expected convergence rate in that case would be
\begin{equation}\label{cauchy_conjecture}
\|u-u_h\| \lesssim M^{\frac12} h^{(k+s)/2} |u|^{\frac12}_{H^{k+1}(\Omega)}.
\end{equation}
Unfortunately, no such stability estimates are known for the Cauchy
problem and a regularised adjoint would have to be considered. We refer
to \cite{BD10} for conditional stability estimates for the
problem \eqref{Cauchy_f} in general Lipschitz domains, leading to
logarithmic estimates and to
\cite{FM86, RHD99} and \cite{Lu95}
for other work on finite element methods on the Cauchy problem and some stability results under special geometrical assumptions.
\end{remark}
\section{Numerical investigations}\label{sec:numerical}
We will present numerical examples of convergence for a smooth exact
solution of the applications given above. For the computations we have
used FreeFEM++, \cite{freefem}. All problems will be set in
$\Omega:=(0,1)\times (0,1)$.  We use unstructured meshes with $2^N$
elements on each side, $N=3,\hdots,8$ and, drawing on our previous
experience of the CIP-method we fix the stabilisation
parameters to be $\gamma_{1,F}=0.01$ for piecewise affine approximation
and $\gamma_{i,F}=0.001$, $i=1,2$ for piecewise quadratic approximation. The
boundary penalty parameter is chosen to be $\gamma_{bc}=10$ for both
cases and both for Dirichlet and Neumann penalty terms. Let us remark that in particular for the ill-posed Cauchy
problem, an optimal choice of the stabilisation parameter can make a
big impact on the error on a fixed mesh, but does not appear to
influence the convergence behavior. For each example we plot
the error quantities estimated in Lemma \ref{stab_conv} and Theorem
\ref{main}. When appropriate we indicate the experimental convergence
order in parenthesis. We report the computational mesh for $N=5$ in
the left plot of figure \ref{mesh_velocities}.

\subsection{Convection--diffusion problems}
We consider an example given in \cite{CD11}
where, in \eqref{convdiff0}, the physical parameters are chosen as $\mu=1$, $c=0$,
\[
\beta:= -100 \left(\begin{array}{c} x+y \\ y-x \end{array} \right)
\]
(see the right plot of figure \ref{mesh_velocities}) and the exact solution is given by
\begin{equation}\label{exact_sol}
u(x,y) = 30x(1-x)y(1-y).
\end{equation}
This function satisfies homogeneous Dirichlet boundary conditions and
has $\|u\| = 1$. Note that $\|\beta\|_{L^\infty} = 200$ and $\nabla
\cdot \beta = -200$, making the problem strongly noncoercive with a
medium high P\'eclet number. The right hand side is then chosen as $\mathcal{L} u$
and in the case of (non-homogeneous) Neumann conditions, a suitable
right hand side is introduced to make
the boundary penalty term consistent. The
optimal convergence rate for the stabilising terms given in Lemma
\ref{stab_conv} is verified in all the numerical examples. 
\begin{figure}
{\centering
\includegraphics[width=6.5cm]{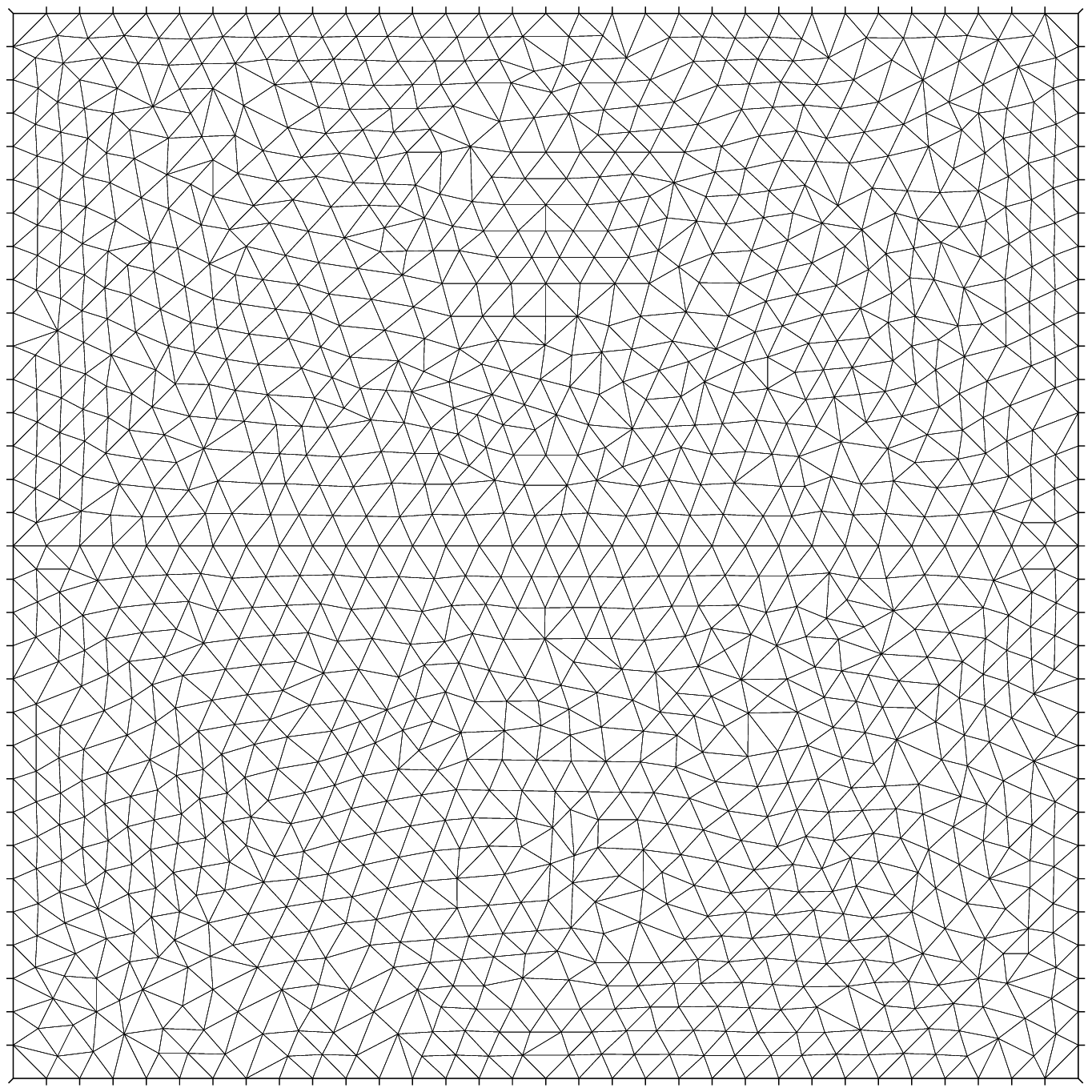}\hspace{-1cm}
\includegraphics[width=6.5cm]{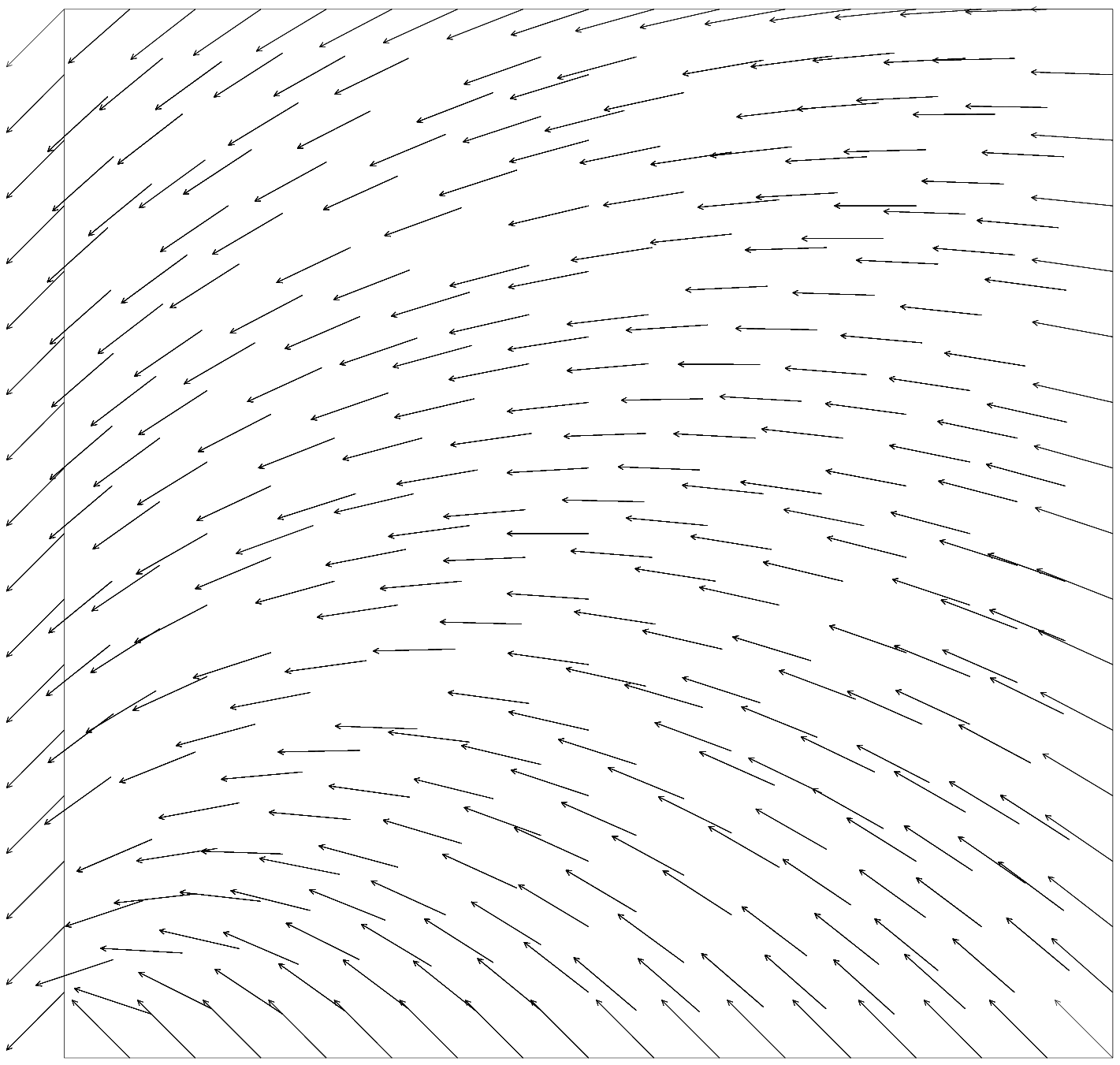}
\caption{Left: example of unstructured mesh, $N=5$. Right: plot of the
  velocity vector field.}\label{mesh_velocities}}
\end{figure}
\subsubsection{Dirichlet boundary conditions}
In table \ref{DirichletP1} we show the result of the computation when
Dirichlet boundary conditions are applied and piecewise affine
approximation is used on a sequence of unstructured meshes. We observe
that the solution exhibits
the pre-asymptotic convergence rate $h^{\frac32}$ under one refinement
before achieving the full second order convergence rate in $L^2$.

In table \ref{DirichletP2} similar data for second order polynomials
are presented. Here the asymptotic regime with full convergence is
obtained from the first refinement.
\begin{table}
\begin{center}
\begin{tabular}{|c|c|c|c|}
\hline
N & $\|u-u_h\|$ & $\|z_h\|$ & $|u_h|_{S_p} + |z_h|_{S_a}$ \\
\hline
3 &  0.038 (--)& 0.024 & 0.57\\ \hline
4 & 0.012 (1.7)& 0.0017 & 0.24\\ \hline
5 & 0.0024 (2.3)& 0.00043 & 0.11 \\ \hline
6 & 0.00043 (2.5)& 0.00012 & 0.052 \\ \hline
7 & 0.00010 (2.1)& 2.5E-05 & 0.025 \\ \hline
8 & 2.3E-05 (2.1)& 5.3E-06 & 0.012\\  \hline
\end{tabular}
\caption{Convergence orders of estimated quantities for the Dirichlet
  problem approximated using piecewise affine elements}\label{DirichletP1}
\end{center}
\end{table} 

\begin{table}
\begin{center}
\begin{tabular}{|c|c|c|c|}
\hline
N &$ \|u-u_h\|$ & $\|z_h\|$ &$ |u_h|_{S_p} + |z_h|_{S_a}$ \\
\hline
3 &  0.0014 (--)& 0.00041 & 0.024\\ \hline
4 &  0.00012 (3.5)& 4.6E-05 & 0.0044 \\ \hline
5 &  8.8E-06 (3.8)& 4.6Ee-06 & 0.00081 \\ \hline
6 &  8.0E-07 (3.5)& 6.6E-07 & 0.00017\\ \hline
7 &  8.3E-08 (3.3)& 8.2E-08 & 3.7E-05\\ \hline
\end{tabular}
\caption{Convergence orders of estimated quantities for the Dirichlet
  problem approximated using piecewise quadratic elements}\label{DirichletP2}
\end{center}
\end{table}

\subsubsection{Neumann boundary conditions}
We consider the same differential operator, but with (non-homogeneous)
Neumann-boundary conditions. This is exactly the problem considered in
\cite{CD11}. The average values of the approximate solutions have been
imposed using Lagrange multipliers. In tables \ref{NeumannP1} and
\ref{NeumannP2} we observe optimal convergence
rates once again as predicted by theory. Observe that in the case of
piecewise affine approximation the dual solution $z_h$ comes into the
asymptotic regime only on the finer meshes.
\begin{table}
\begin{center}
\begin{tabular}{|c|c|c|c|}
\hline
N & $\|u-u_h\|$ & $\|z_h\|$ & $|u_h|_{S_p} + |z_h|_{S_a} $\\
\hline
3 &  0.028 (--)& 0.028 (--)& 0.82\\ \hline
4 & 0.0066 (2.1)& 0.016 (0.8)& 0.32 \\ \hline
5 & 0.0016 (2.0)& 0.0058 (1.5)& 0.13\\ \hline
6 & 0.00039 (2.0)& 0.0015 (2.0)& 0.060\\ \hline
7 & 9.7E-05 (2.0)& 0.00031 (2.3) & 0.028\\ \hline
8 & 2.3E-05 (2.1)& 6.5E-05 (2.3)& 0.013\\  \hline
\end{tabular}
\caption{Convergence orders of estimated quantities for the Neumann
  problem approximated using piecewise affine elements}\label{NeumannP1}
\end{center}
\end{table}

\begin{table}
\begin{center}
\begin{tabular}{|c|c|c|c|}
\hline
N &$ \|u-u_h\|$ & $\|z_h\| $&$ |u_h|_{S_p} + |z_h|_{S_a}$ \\
\hline
3 &  0.00061 (--) & 0.0020 (--)& 0.030 \\ \hline
4 & 6.6E-05 (3.2) & 0.00040 (2.3)& 0.0054 \\ \hline
5 & 6.5E-06 (3.3)& 2.5E-05 (4.0)& 0.00099 \\ \hline
6 & 7.1E-07 (3.2)& 1.7E-06 (3.9) & 0.00020\\ \hline
7 & 7.9E-08 (3.2)& 1.4E-07 (3.6)& 4.2E-05\\ \hline
\end{tabular}
\caption{Convergence orders of estimated quantities for the Neumann
  problem approximated using piecewise quadratic elements}\label{NeumannP2}
\end{center}
\end{table}

\begin{table}
\begin{center}
\begin{tabular}{|c|c|c|c|c|}
\hline
N & $\|u-u_h\|$ & $\|z_h\|$ & $|u_h|_{S_p} + |z_h|_{S_a}$ & $\|\nabla (u -
u_h) \cdot n\|_{-\frac12,h,\partial \Omega}$\\
\hline
3 &   0.070 (--)& 0.59 (--)& 2.0 (--)& 2.7 (--)\\ \hline
4 &   0.074 (--)& 0.42 (0.49)& 0.79 (1.3) & 1.3 (1.1)\\ \hline
5 &   0.037 (1.0)& 0.30 (0.49) &0.30 (1.4)& 0.75 (0.80)\\ \hline
6 &   0.029 (0.35)& 0.26 (0.2)& 0.13 (1.2)& 0.51 (0.56)\\ \hline
7 &   0.024 (0.27)& 0.20 (0.37)& 0.054 (1.3)& 0.33 (0.62)\\ \hline
8 &   0.020 (0.26)& 0.16 (0.32)& 0.022 (1.3)& 0.21 (0.65)\\ \hline
\end{tabular}
\caption{Convergence orders of estimated quantities for the Poisson Cauchy
  problem approximated using piecewise affine elements}\label{P1cauchyPoissonfin}
\end{center}
\end{table}

\begin{table}
\begin{center}
\begin{tabular}{|c|c|c|c|c|}
\hline
N & $\|u-u_h\|$ & $\|z_h\|$ & $|u_h|_{S_p} + |z_h|_{S_a}$ & $\|\nabla (u -
u_h) \cdot n\|_{-\frac12,h,\partial \Omega}$\\
\hline
3 &  0.031 (--)& 0.062 (--)& 0.073 (--)& 0.92 (--)\\ \hline
4 &  0.022 (0.49)& 0.025 (1.3)& 0.014 (2.4)& 0.48 (0.94)\\ \hline
5 & 0.013 (0.76)& 0.014 (0.84)& 0.0025 (2.5)& 0.24 (1.0)\\  \hline
6 & 0.0088 (0.56)& 0.011 (0.35)& 0.00047 (2.4)& 0.13 (0.88)\\ \hline
7 & 0.0069 (0.35)& 0.0067 (0.72)& 8.8E-05 (2.8)& 0.080 (0.70)\\ \hline
\end{tabular}
\caption{Convergence orders of estimated quantities for the Poisson Cauchy
  problem approximated using piecewise quadratic elements,
  $\gamma=0.001$, $\gamma_{bc} = 10$}\label{cauchyPoissonP2fin}
\end{center}
\end{table}

%
\subsection{A Cauchy problem}
Since we have no complete theory for the ill-posed Cauchy problem we
will proceed with a more thorough numerical investigation. First we
consider the Cauchy problem obtained by taking $\kappa=0$ in \eqref{Cauchy_f}. Then we consider a Cauchy problem using the
convection--diffusion operator of \eqref{convdiff_forward} in two
different boundary configurations. For all test cases we use the
exact solution \eqref{exact_sol} and the stabilisation parameters
given above. We present the data for the quantities estimated in Lemma
\ref{stab_conv} and Theorem \ref{main}, but also the error in the
total diffusive flux in the discrete $H^{-1/2}(\partial \Omega)$ norm on the boundary.
\[
\|\nabla (u - u_h)\cdot n\|^2_{-\frac12,h,\partial \Omega}:=
\int_{\partial \Omega} h (\nabla (u - u_h)\cdot n)^2 ~\mbox{d}s.
\]
\subsubsection{Poisson's equation}
Here we consider the problem with $\kappa=0$ in \eqref{Cauchy_f}. We impose the Cauchy data, i.e. both Dirichlet and
Neumann data, on boundaries $x=0,\, 0<y<1$ and $y=1,\, 0<x<1$. In table \ref{P1cauchyPoissonfin} we
show the obtained errors when piecewise affine approximation is used
and in table \ref{cauchyPoissonP2fin} the results for piecewise quadratic
approximation. 

First note that in both cases one observes the optimal convergence of
the stabilisation terms predicted by Lemma \ref{stab_conv}. For the
$L^2$-norm of the error we observe experimental convergence orders
$h^\alpha$ with typically
$\alpha \sim 0.25$ for piecewise affine approximation and $\alpha
\sim 0.5$ for quadratic approximation.
Higher convergence
orders were obtained in both cases for the normal diffusive flux.
In figure \ref{penalty_plot}, we present a study of the $L^2$-norm
error under variation of the stabilisation parameter. 
The computations are made on one mesh, with $32$ elements
per side and the Cauchy problem is solved with $k=1,2$ and different
values for $\gamma_{F,1}=\gamma_{F,2}$ with $\gamma_{bc}=10$
fixed. The  level of $10 \%$ relative error is indicated by the
horizontal dotted line.
Observe that the robustness with
respect to stabilisation parameters is much better for quadratic
approximation. Indeed the $10\%$ error level is met for all parameter
values $\gamma_{i,F} \in [2.0E-5,1]$, whereas in the case of piecewise
affine approximation one has to take $\gamma_{1,F} \in [0.003,0.05]$ approximately.
Similar results for
the boundary penalty parameter not reported here showed that the
method was even more robust under perturbations of $\gamma_{bc}$.
In the left plot of figure \ref{Cauchy_error} we present the contour
plot of the error $u-u_h$ and in the right, the contour plot of $z_h$. In both cases
the error is concentrated on the boundary where no boundary conditions
are imposed.
\begin{figure}
\centering
\includegraphics[width=10cm]{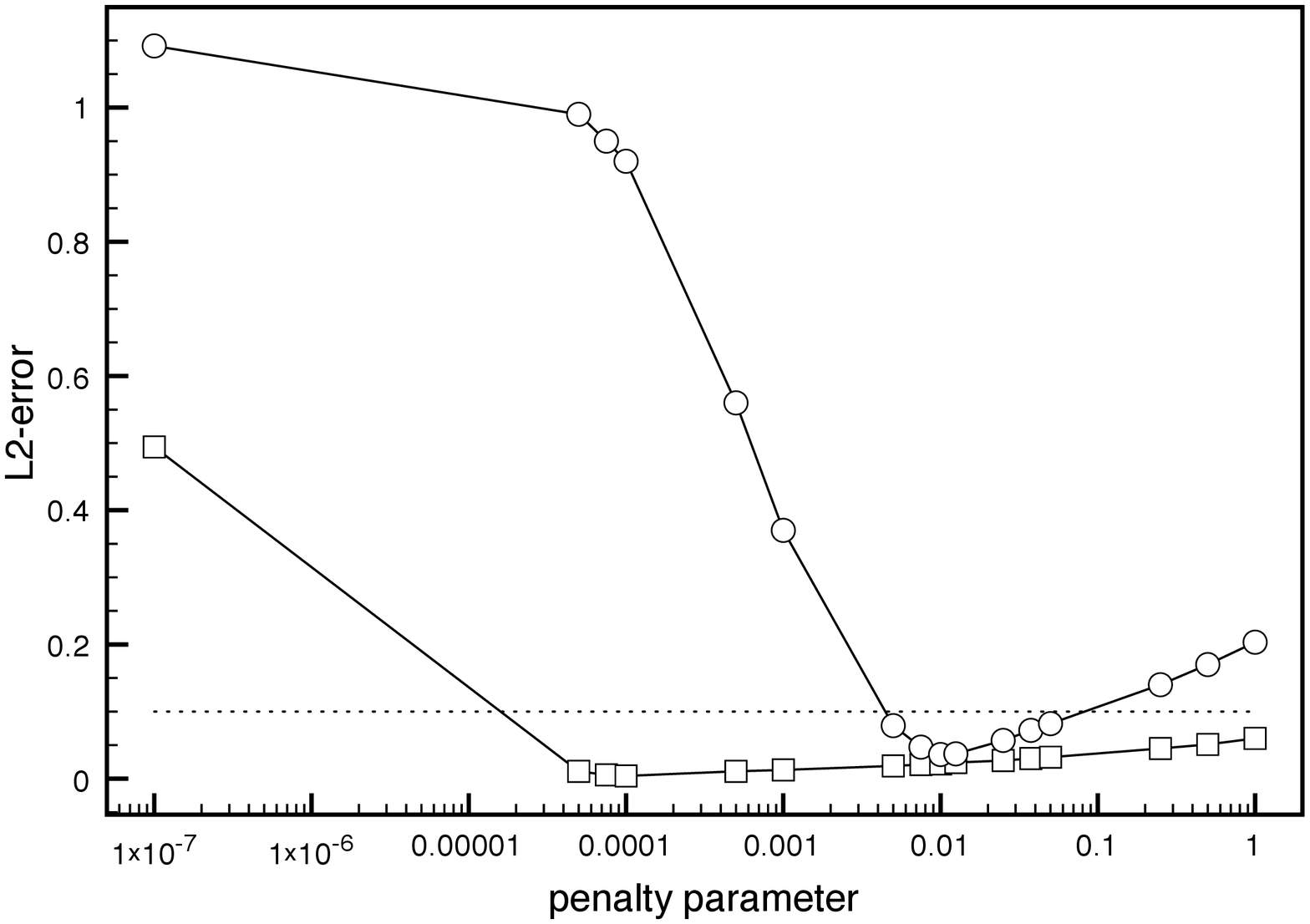}
\caption{Study of the $L^2$-norm
error under variation of the stabilisation parameter, circles: affine
elements, squares: quadratic elements}\label{penalty_plot}
\end{figure}
\begin{figure}
{\centering
\includegraphics[width=6.5cm]{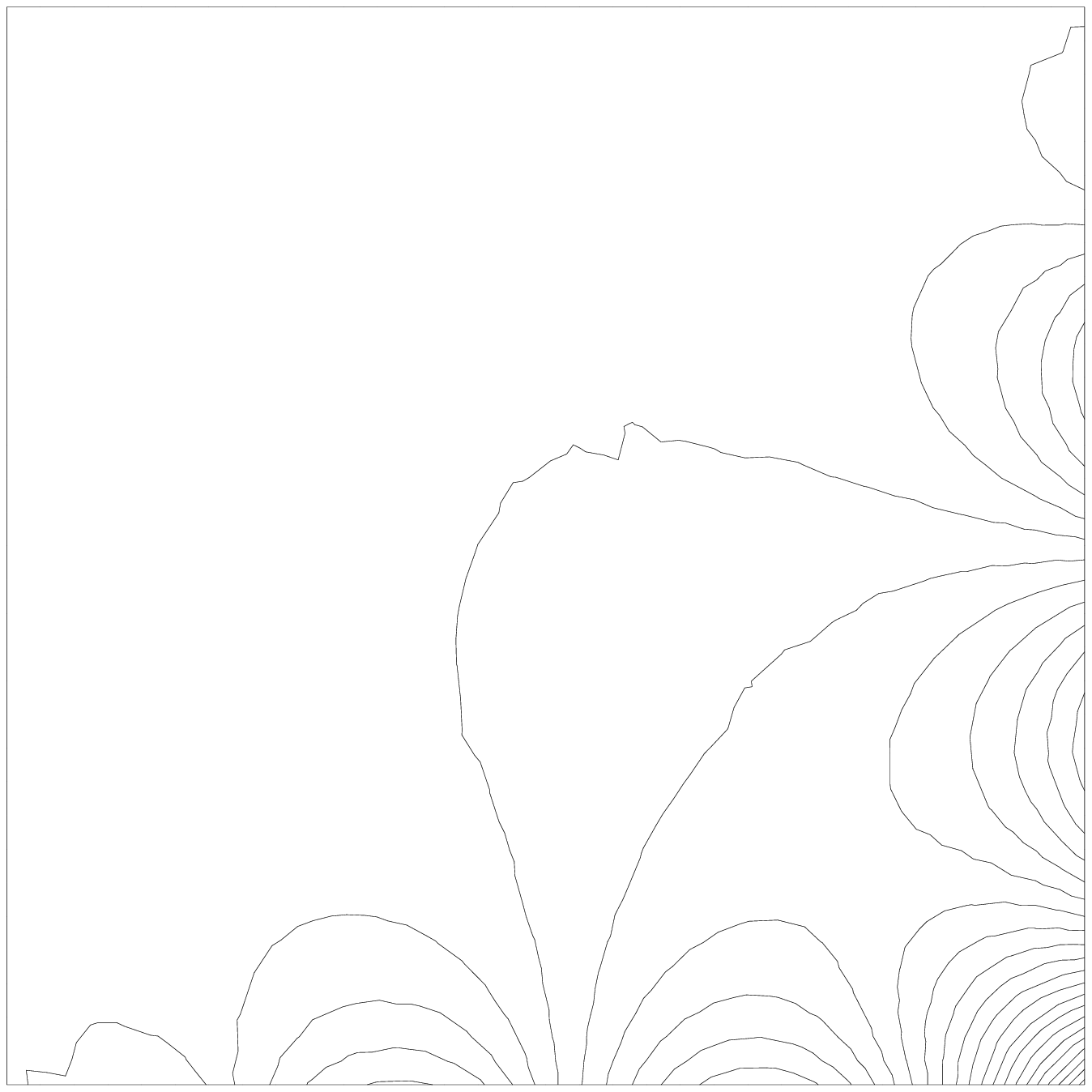}\hspace{-1cm}
\includegraphics[width=6.5cm]{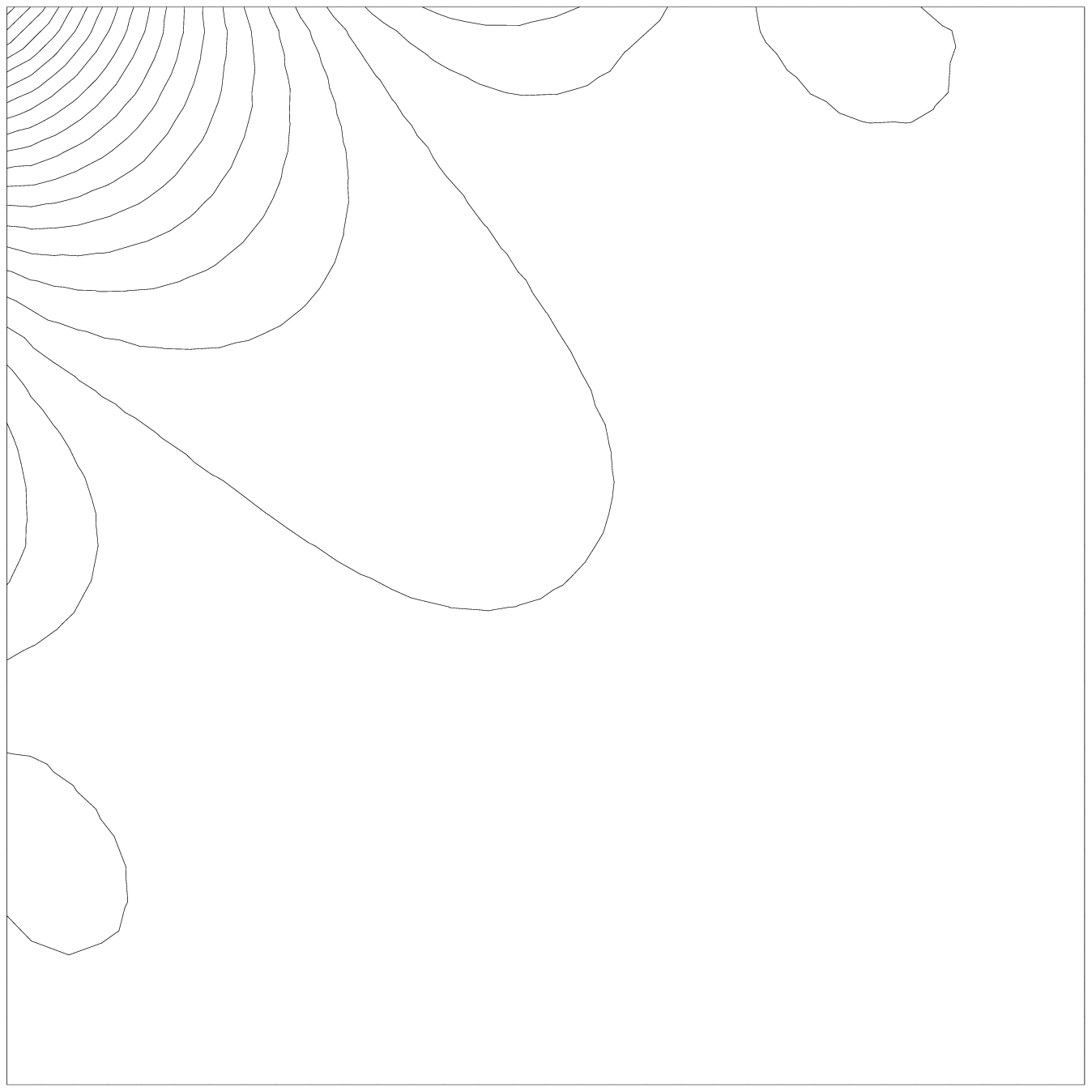}}
\caption{Contour plots of the error $u-u_h$ (left plot) and the
  error in the dual variable $z_h$ (right plot).}\label{Cauchy_error}
\end{figure}
\subsubsection{The noncoercive convection--diffusion equation}\label{noncoercive_exp}
As a last example we consider the Cauchy problem using the
noncoercive convection--diffusion operator \eqref{ellipt_oper}.
The stability of the problem depends strongly on where the boundary conditions
are imposed in relation to the inflow and outflow
boundaries. To illustrate this we propose two configurations. Recalling the right 
plot of figure \ref{mesh_velocities} we observe that the flow enters along the boundaries $y=0$, $y=1$ and
$x=1$ and
exits on the boundary $x=0$. Note that the strongest inflow takes
place on $y=0$ and $x=1$, the flow being close to parallel to the
boundary in the right half of the segment $y=1$. We propose the two
different Cauchy problem configurations:
\begin{itemize}
\item[Case 1.] We impose  Dirichlet and Neumann data on the two mixed
  boundaries $x=0$ and $y=1$.
\item[Case 2.] We impose Dirichlet and Neumann data on the two
  inflow boundaries $y=0$ and $x=1$. 
\end{itemize}
In the first case the outflow portion or the inflow portion of every
streamline are include in the Cauchy boundary whereas in the second
case the main part of the inflow boundary is included. This highlights
two different difficulties for Cauchy problems for the
convection--diffusion operator, in case 1 we must solve the problem
backward along the characteristics, essentially solving a backward
heat equation, whereas in case 2 the crosswind diffusion must
reconstruct missing boundary data.

In tables \ref{cauchyCDP1case1} - \ref{cauchyCDP2case2}, we report the results on the same sequence of unstructured meshes used
in the previous examples for piecewise affine and piecewise quadratic
approximations and the two problem configurations. First note that in
all cases the result of Lemma \ref{stab_conv} holds as
expected. Otherwise the method behaves very differently in the two
cases. For case 1 we observe better convergence orders than in the
case of the pure Poisson problem, typically $h^{\frac12}$ for affine
elements and $h$ for quadratic elements in the $L^2$-norm.
Even higher orders are obtained for the global diffusive flux in the
discrete $H^{-\frac12}$ norm. The dual variable $z_h$ on the other
hand has very poor convergence, although it is quite small on all
meshes in the case of quadratic approximation. Case 2 (control on main
part of the inflow) is clearly much more difficult. Convergence orders
for both the affine case and the quadratic case are poor (around $\sim h^{\frac15}$)
and uneven. The diffusive fluxes on the boundary nevertheless still converges approximately as
$h^{\frac12}$ in both cases. We conclude that the Cauchy
convection--diffusion problem is much less ill-posed if for each
streamline {\emph{either the inflow part or the outflow part lies in
    the controlled zone}}. The fact that we in case 2 controls more of
the inflow boundary is unimportant compared to the fact that both the
inflow and the outflow are unknown in the boundary portion around the
corner $(0,1)$.
\begin{table}
\begin{center}
\begin{tabular}{|c|c|c|c|c|}
\hline
N & $\|u-u_h\|$ & $\|z_h\|$ &$ |u_h|_{S_p} + |z_h|_{S_a}$ & $\|\nabla (u -
u_h) \cdot n\|_{-\frac12,h,\partial \Omega}$\\
\hline
3 &   0.032 (--)& 0.044  (--)& 1.6   (--)& 0.35  (--)\\ \hline
4 &   0.010 (1.7)& 0.020 (1.1)& 0.61 (1.4)& 0.13 (1.4)\\ \hline
5 &   0.0045 (1.2)& 0.034 (--)& 0.24 (1.3)& 0.048 (1.4)\\ \hline
6 &  0.0035 (0.36) & 0.052 (--)& 0.10 (1.3)& 0.018 (1.4)\\ \hline
7 &  0.0039 (--)& 0.056 (--)& 0.045 (1.2)& 0.0074 (1.3)\\ \hline
8 &   0.0026 (0.58)& 0.059 (--)& 0.020 (1.2)& 0.0031 (1.3)\\ \hline
\end{tabular}
\caption{Convergence orders of estimated quantities for the convection--diffusion Cauchy
  problem approximated using piecewise affine elements (case 1)}\label{cauchyCDP1case1}
\end{center}
\end{table}

\begin{table}\label{cauchyCDP1case2}
\begin{center}
\begin{tabular}{|c|c|c|c|c|}
\hline
N & $\|u-u_h\|$ & $\|z_h\|$ &$ |u_h|_{S_p} + |z_h|_{S_a}$ & $\|\nabla (u -
u_h) \cdot n\|_{-\frac12,h,\partial \Omega}$\\
\hline
3 &   0.13 (--)& 0.032 (--)& 1.74 (--)& 0.44 (--)\\ \hline
4 &   0.097 (0.42)& 0.012 (1.4)& 0.63 (1.5)& 0.23 (0.94)\\ \hline
5 &   0.075 (0.37)& 0.010 (0.26)& 0.24 (1.4)& 0.11 (1.1)\\ \hline
6 &   0.067 (0.16)& 0.010 (--)& 0.10 (1.3)& 0.070 (0.65)\\ \hline
7 &   0.063 (0.089)& 0.0097 (0.044)& 0.043 (1.2)& 0.047 (0.57)\\ \hline
8 &   0.056 (0.17)& 0.0082 (0.24)& 0.018 (1.3)& 0.030 (0.65)\\ \hline
\end{tabular}
\caption{Convergence orders of estimated quantities for the convection--diffusion Cauchy
  problem approximated using piecewise affine elements (case 2)}
\end{center}
\end{table}

\begin{table}\label{cauchyCDP2case1}
\begin{center}
\begin{tabular}{|c|c|c|c|c|}
\hline
N & $\|u-u_h\|$ & $\|z_h\|$ &$ |u_h|_{S_p} + |z_h|_{S_a}$ & $\|\nabla (u -
u_h) \cdot n\|_{-\frac12,h,\partial \Omega}$\\
\hline
3 &  0.0022 (--)& 0.0037 (--)& 0.096 (--)& 0.033 (--)\\ \hline
4 &  0.00054 (2.0)& 0.00089 (2.1)& 0.020 (2.3)& 0.0091 (1.9)\\ \hline
5 &  0.00024(1.2)& 0.0013 (--)& 0.0041 (2.3)& 0.0021 (2.1)\\ \hline
6 &  0.00012 (1.0)& 0.00078 (0.74)& 0.00096 (2.1)& 0.00047 (2.2)\\ \hline
7 & 5.6E-05 (1.1)& 0.00048 (0.70)& 0.00022 (2.1)& 0.00015 (1.6)\\ \hline
\end{tabular}
\caption{Convergence orders of estimated quantities for the convection--diffusion Cauchy
  problem approximated using piecewise quadratic elements (case 1)}
\end{center}
\end{table}

\begin{table}
\begin{center}
\begin{tabular}{|c|c|c|c|c|}
\hline
N & $\|u-u_h\|$ & $\|z_h\|$ &$ |u_h|_{S_p} + |z_h|_{S_a}$ & $\|\nabla (u -
u_h) \cdot n\|_{-\frac12,h,\partial \Omega}$\\
\hline
3 & 0.020 (--)& 0.0014 (--)& 0.074 (--)& 0.12 (--)\\ \hline
4 & 0.034 (--)& 0.00028 (2.3)& 0.013 (2.5)& 0.11 (0.12)\\ \hline
5 & 0.026 (0.39)& 0.00011 (1.4)& 0.0025 (2.4)& 0.065 (0.76)\\ \hline
6 & 0.024 (0.12)& 8.3E-05 (0.4)& 0.00046 (2.4)& 0.043 (0.60)\\ \hline
7 & 0.023 (0.06)& 3.6E-05 (1.2)& 8.7E-05 (2.4)& 0.029 (0.57)\\ \hline
\end{tabular}
\caption{Convergence orders of estimated quantities for the convection--diffusion Cauchy
  problem approximated using piecewise quadratic elements (case 2)}\label{cauchyCDP2case2}
\end{center}
\end{table}
\section{Concluding remarks}
We have proposed a framework for the design of stabilised finite
element methods for noncoercive and nonsymmetric
problems. The fundamental
idea is to use an optimisation framework to select the discrete
solution on each mesh. This also opens new venues for inverse problems
or boundary control problems, where Tichonov regularisation can been
introduced in the form of a stabilisation operator with optimal weak
consistency properties, eliminating the need to match a penalty
parameter and the mesh size to obtain optimal performance.
The method has some other
interesting features. In particular for piecewise affine approximation
spaces the discrete solution can be shown to exist under very mild
assumptions.
Both symmetric stabilisation methods and the Galerkin least squares
methods are considered in the analysis.
Convergence of the method is obtained formally under abstract
assumptions on the bilinear form that are shown to hold for three
nontrivial examples. The actual performance of the method in practice
depends crucially on the stability properties of the underlying PDE
and when these are unknown,
must be investigated numerically. Sometimes observed convergence
orders are unlikely to match those predicted in Theorem \ref{main},
(except possibly for very small $h$), due to
huge stability constants in the bound \eqref{smooth} (cf. the
Helmholtz equation for large wave numbers), ormore generally ill-posedness of the
dual problem (c.f. the Cauchy problem for Poisson's equation). Another problem
that may arise when ill-conditioned problems
are considered, is poor conditioning of the system matrix. Even in the case of piecewise
affine approximation the stabilisation corresponds to a very weak norm
and in case the underlying problem is ill-posed this must be expected
to show in the condition number. Clearly preconditioners for the linear systems
arising is an important open problem. Other subjects for future
work concerns the inclusion of hyperbolic problems in the framework
(see \cite{part2})
and the application of the method to data assimilation and boundary
control.
\section*{Acknowledgment} Partial funding for this research was
provided by EPSRC
(Award number EP/J002313/1).
\bibliographystyle{plain}   

\begin{thebibliography}{10}
\bibitem{ABE06}
M.~ Azaïez ; F.~ Ben Belgacem ; H.~ El Fekih. 
\newblock{On Cauchy's problem. II. Completion, regularisation and
 approximation.}
\newblock{\em Inverse Problems}  22,  no. 4, 1307--1336, 2006.

\bibitem{Belg07}  
F. B.~ Belgacem.
\newblock{Why is the Cauchy problem severely ill-posed?}
\newblock{\em  Inverse Problems.}  23,  no. 2, 823--836, 2007.

\bibitem{Berto99}
S.~ Bertoluzza. 
\newblock{The discrete commutator property of approximation spaces.}
\newblock{\em  C. R. Acad. Sci. Paris Sér. I Math.}  329,  no. 12,
1097--1102, 1999.

\bibitem{BD10} L.~Bourgeois, Laurent ;  J.~Dardé. 
\newblock{About stability and regularisation of ill-posed elliptic Cauchy
 problems: the case of Lipschitz domains.}
 \newblock{Appl. Anal.}  89 no. 11, 1745--1768, 2010.

\bibitem{BLP98} 
J.~Bramble ; R.~Lazarov ; J. E.~Pasciak.  
\newblock{Least-squares for second-order elliptic problems.}
\newblock{\em Symposium on Advances in Computational Mechanics, Vol. 5 (Austin, TX,
 1997).
 Comput. Methods Appl. Mech. Engrg.}  152,  no. 1-2, 195--210,
1998.

\bibitem{part2}
E.~Burman.
\newblock A stabilised finite element method for nonsymmetric,
  noncoercive and ill-posed problems. Part II: hyperbolic equations
\newblock {\em in preparation}, 2013.

\bibitem{Bu05}
E.~Burman.
\newblock A unified analysis for conforming and nonconforming stabilized finite
  element methods using interior penalty.
\newblock {\em SIAM J. Numer. Anal.}, 43(5):2012--2033 (electronic), 2005.

\bibitem{BFH06}
E.~Burman, M.~A. Fern{\'a}ndez, and P.~Hansbo.
\newblock Continuous interior penalty finite element method for {O}seen's
  equations.
\newblock {\em SIAM J. Numer. Anal.}, 44(3):1248--1274, 2006.

\bibitem{BH04}
E.~Burman ; P.~Hansbo.
\newblock{Edge stabilization for Galerkin approximations of
 convection-diffusion-reaction problems.}
\newblock{\em  Comput. Methods Appl. Mech. Engrg.}  193,  no. 15-16,
1437--1453, 2004.

\bibitem{CF88}  
I.~Capuzzo-Dolcetta, I. ;  S.~Finzi Vita.  
\newblock Finite element approximation of some indefinite elliptic
problems. 
\newblock{\em Calcolo}  25,  no. 4, 379--395, 1988.

\bibitem{CD11}  
C.~Chainais-Hillairet ;  J.~Droniou. 
\newblock{Finite-volume schemes for noncoercive elliptic problems with Neumann
 boundary conditions.}
\newblock{\em IMA J. Numer. Anal.}  31  (2011),  no. 1, 61--85.

\bibitem{CH06}  
A.~Chakib ; A.~Nachaoui. 
\newblock{Convergence analysis for finite element approximation to an inverse
 Cauchy problem.}
\newblock{\em  Inverse Problems.}  22,  no. 4, 1191--1206, 2006.

\bibitem{DD76}
J.~Douglas ;  T.~Dupont.
\newblock{ Interior penalty procedures for elliptic and parabolic Galerkin
 methods.}
\newblock{\em  Computing methods in applied sciences (Second Internat. Sympos.,
 Versailles, 1975)}, 
 pp. 207--216. Lecture Notes in Phys., Vol. 58, Springer, Berlin,  1976. 

\bibitem{Dro02}  
J.~Droniou. 
\newblock{Non-coercive linear elliptic problems.}
\newblock{\em  Potential Anal.}  17,  no. 2, 181--203, 2002.

\bibitem{DGH03} 
J.~Droniou ; T.~Gallou\"et ; R.~Herbin.
\newblock{ A finite volume scheme for a noncoercive elliptic equation with measure
 data.}
\newblock{\em  SIAM J. Numer. Anal.}  41,  no. 6, 1997--2031
(electronic), 2003.

\bibitem{DV09}  
J.~Droniou ; J.-L.~V\`azquez. 
\newblock{Noncoercive convection-diffusion elliptic problems with Neumann
 boundary conditions.}
\newblock{\em  Calc. Var. Partial Differential Equations}  34,  no. 4,
413--434, 2009.

\bibitem{FM86}
 R.~S.~Falk ; P.~B.~Monk. 
\newblock{Logarithmic convexity for discrete harmonic
 functions and the approximation of the Cauchy problem for Poisson's
 equation.} 
\newblock{\emph Math. Comp.} 47, no. 175, 135–149, 1986.

\bibitem{freefem}
F.~Hecht, O.~Pironneau, A.~Le~Hyaric, and K.~Ohtsuka.
\newblock {\em FreeFem++ v. 2.11. User's Manual}.
\newblock University of Paris 6.

\bibitem{KT12} 
K.~Kavaliou ; L.~Tobiska. 
\newblock{A finite element
    method for a noncoercive elliptic problem with Neumann boundary
    conditions.} 
\newblock{\em Computational Methods in Applied Mathematics}, 12 (2) ,
  pp. 168-183, 2012.

\bibitem{Ku07} 
J.~Ku.
\newblock{A least-squares method for second order noncoercive elliptic partial
 differential equations.}
\newblock{\em  Math. Comp.}  76,  no. 257, 97--114 (electronic), 2007.

\bibitem{Lu95}
W.~Lucht. 
\newblock{A finite element method for an ill-posed problem.}
Seventh Conference on the Numerical Treatment of Differential Equations
 (Halle, 1994).
\newblock{\em  Appl. Numer. Math.}  18,  no. 1-3, 253--266, 1995.

\bibitem{RHD99}
H.-J.~Reinhardt ; H. Han ; N. H. Dinh.
\newblock{Stability and regularization of a discrete approximation to
  the Cauchy problem for Laplace's equation.}
\newblock{\emph  SIAM J. Numer. Anal.} 36, no. 3, 890–905, 1999.

\bibitem{Schatz74}  
A.~Schatz. 
\newblock{ An observation concerning Ritz-Galerkin methods with indefinite
 bilinear forms.}
\newblock{\em  Math. Comp.}  28, 959--962, 1974.

\bibitem{ZhuBuWu12}
L.~Zhu ; E.~Burman ; H.~Wu.
\newblock{Continuous Interior Penalty Finite Element Method for
  Helmholtz Equation with High Wave Number: One Dimensional Analysis.} 
\newblock{\em arXiv:1211.1424}, 2012.

\bibitem{Wu11}
H.~Wu.
\newblock{Continuous Interior Penalty Finite Element Methods for the Helmholtz Equation with Large Wave Number}
\newblock{\em arXiv:1106.4079}. 2011.

\end{thebibliography}

\end{document}